\newtheorem{theorem}{Theorem}[section]
\newtheorem{proposition}[theorem]{Proposition}
\newtheorem{lemma}[theorem]{Lemma}
\theoremstyle{definition}
\newtheorem{definition}[theorem]{Definition}
\newtheorem{example}[theorem]{Example}
\newcommand{\oline}[1]{\mathbin{\overline{#1}}}
\newcommand{\uline}[1]{\mathbin{\underline{#1}}}
\begin{document}

\title{A multiple conjugation biquandle and handlebody-links}

\author[A. Ishii]{Atsushi Ishii}
\address{Institute of Mathematics, University of Tsukuba, 1-1-1 Tennodai, Tsukuba, Ibaraki 305-8571, Japan}
\email{aishii@math.tsukuba.ac.jp}

\author[M. Iwakiri]{Masahide Iwakiri}
\address{Graduate School of Science and Engineering, Saga University, 1 Honjo-machi, Saga-city, Saga, 840-8502, Japan}
\email{iwakiri@ms.saga-u.ac.jp}

\author[S. Kamada]{Seiichi Kamada}
\address{Department of Mathematics, Osaka City University, Sugimoto, Sumiyoshi-ku, Osaka 558-8585, Japan}
\email{skamada@sci.osaka-cu.ac.jp}

\author[J. Kim]{Jieon Kim}
\address{Osaka City University Advanced Mathematical Institute, Osaka City University, Sugimoto, Sumiyoshi-ku, Osaka 558-8585, Japan
}
\email{jieonkim@sci.osaka-cu.ac.jp}

\author[S. Matsuzaki]{Shosaku Matsuzaki}
\address{Department of Mathematics, School of Education, Waseda University, Nishi-Waseda 1-6-1, Shinjuku-ku, Tokyo, 169-8050, Japan}
\email{shosaku@aoni.waseda.jp}

\author[K. Oshiro]{Kanako Oshiro}
\address{Department of Information and Communication Sciences, Sophia University, 7-1 Kioi-cho, Chiyoda-ku, Tokyo 102-8554, Japan}
\email{oshirok@sophia.ac.jp}

\keywords{biquandles, multiple conjugation biquandles, handlebody-links,
parallel biquandle operations, quandles
}
\subjclass[2010]{57M27, 57M25}
\thanks{Atsushi Ishii was supported by JSPS KAKENHI Grant Number 15K0486. Seiichi Kamada was supported by JSPS KAKENHI Grant Number  26287013. Jieon Kim was supported by JSPS KAKENHI Grant Number 15F15319 and a JSPS Postdoctral Fellowship for Foreign Researchers. Kanako Oshiro  was supported by JSPS KAKENHI Grant Number 16K17600.
}

\date{}

\maketitle

\begin{abstract}
We introduce a multiple conjugation biquandle, and show that it is the universal algebra to define a semi-arc coloring invariant for handlebody-links.
A multiple conjugation biquandle is a generalization of a multiple conjugation quandle.
We extend the notion of $n$-parallel biquandle operations for any integer $n$, and show that any biquandle gives a multiple conjugation biquandle with them.
\end{abstract}

\section{Introduction}

A quandle~\cite{Joyce82,Matveev82}, biquandle~\cite{FennaJordanSantanaKauffman04, FennRourkeSanderson95, KauffmanRadford03}, and multiple conjugation quandle~\cite{Ishii15MCQ} are algebras having certain universal properties related to topological objects in geometric topology.
A quandle is a universal algebra to define an arc coloring invariant for oriented knots, where an arc coloring is a map from the set of arcs of a knot diagram to the algebra.
The axioms of a quandle correspond to the Reidemeister moves on oriented knot diagrams.
A biquandle is a generalization of a quandle, which is universal with respect to semi-arc colorings, and the axioms of a biquandle correspond to the Reidemeister moves.

A handlebody-knot is a handlebody embedded in the $3$-sphere $S^3$, whose diagram is given by a diagram of a spatial trivalent graph which is a spine of the handlebody.
A multiple conjugation quandle (MCQ) is a universal symmetric quandle with a partial multiplication to define arc coloring invariants for handlebody-knots, where a partial multiplication is an operation used at trivalent vertices (refer to \cite{Ishii15MCQ} or Section~\ref{sect:coloring}).
Some axioms of a multiple conjugation quandle are not directly derived from the Reidemeister moves.
In general we call conditions on an algebra which are directly derived from the Reidemeister moves primitive conditions.
In Section~4 of~\cite{Ishii15MCQ}, the first author listed primitive conditions for an arc coloring invariant and proved that the axioms of a multiple conjugation quandle are obtained from the primitive conditions.

In this paper, we introduce a multiple conjugation biquandle (MCB) as a universal biquandle with a partial multiplication to define semi-arc coloring invariants for handlebody-knots.
We list primitive conditions for a semi-arc coloring invariant and prove that the axioms of a multiple conjugation biquandle are obtained from the primitive conditions (Theorem~\ref{thm:universality}).
From the axioms of an MCB, it is naturally seen that an MCB is a generalization of an MCQ.
In~\cite{IshiiNelson16}, Nelson and the first author introduced a partially multiplicative biquandle to construct a semi-arc coloring invariant, whose axioms are almost identical to the primitive conditions.
Theorem~\ref{thm:universality} brings out the algebraic structure of a partially multiplicative biquandle.

In~\cite{IshiiNelson16}, the notions of $G$-family of biquandles and $n$-parallel biquandle operations were introduced for $n\in\mathbb{Z}_{\geq0}$.
We refine the axioms of a $G$-family of biquandles as a corollary of Theorem~\ref{thm:universality}, and extend the notion of $n$-parallel biquandle operations for any integer $n$.
We also show that, for any biquandle, the $n$-parallel biquandle operations yield a $\mathbb{Z}$-family of biquandles, which gives us a multiple conjugation biquandle.
We introduce a $G$-family of (generalized) Alexander biquandles, which also gives us many multiple conjugation biquandles.

(Co)homology theory is developed on quandles~\cite{CarterJelsovskyKamadaLangfordSaito03}, multiple conjugation quandles~\cite{CarterIshiiSaitoTanaka16}, and biquandles~\cite{CarterElhamdadiSaito04,CenicerosElhamdadiGreenNelson14}.
The theory provides quandle cocycle invariants, which give us various information about knots, surface-knots, and handlebody-knots (cf.~\cite{AsamiSatoh05, CarterElhamdadiSaitoSatoh06, CarterJelsovskyKamadaLangfordSaito03, IshiiIwakiriJangOshiro13, Iwakiri06, SatohShima04}).
(Co)homology theory will be also developed for multiple conjugation biquandles in the consecutive paper~\cite{IshiiIwakiriKamadaKimMatsuzakiOshiroHomology}.
This paper is the basis to develop the (co)homology theory for multiple conjugation biquandles.

The rest of the paper is organized as follows.
In Section~2, we recall the definition of a biquandle, and introduce $n$-parallel biquandle operations, whose well-definedness is given in Section~9.
In Section~3, we introduce a multiple conjugation biquandle with two equivalent definitions, and in Section~4, we show that the two definitions are equivalent.
In Section~5, we recall the definition of a handlebody-link, and introduce colorings for handlebody-knots.
In Section~6, we prove that a multiple conjugation biquandle gives a coloring invariant for handlebody-links.
In Sections~7 and~8, we discuss the universality of the algebras used for colorings.
In Section~9, we show some properties of $n$-parallel biquandle operations.

\section{Biquandles}

We recall the definition of a biquandle and introduce a conjugation biquandle.

\begin{definition}[\cite{FennRourkeSanderson95,KauffmanRadford03}]
A \textit{biquandle} is a non-empty set $X$ with binary operations $\uline{*},\oline{*}:X\times X\to X$ satisfying the following axioms.
\begin{itemize}
\item[(B1)]
For any $x\in X$, $x\uline{*}x=x\oline{*}x$.
\item[(B2)]
For any $a\in X$, the map $\uline{*}a:X\to X$ sending $x$ to $x\uline{*}a$ is bijective.
\item[]
For any $a\in X$, the map $\oline{*}a:X\to X$ sending $x$ to $x\oline{*}a$ is bijective.
\item[]
The map $S:X\times X\to X\times X$ defined by $S(x,y)=(y\oline{*}x,x\uline{*}y)$ is bijective.
\item[(B3)]
For any $x,y,z\in X$,
\begin{align*}
&(x\uline{*}y)\uline{*}(z\uline{*}y)=(x\uline{*}z)\uline{*}(y\oline{*}z), \\
&(x\uline{*}y)\oline{*}(z\uline{*}y)=(x\oline{*}z)\uline{*}(y\oline{*}z), \\
&(x\oline{*}y)\oline{*}(z\oline{*}y)=(x\oline{*}z)\oline{*}(y\uline{*}z).
\end{align*}
\end{itemize}
\end{definition}

We remark that $(X,*)$ is a quandle if and only if $(X,*,\oline{*})$ is a biquandle with $x\oline{*}y=x$.
We introduce a conjugation biquandle as an example of a biquandle.

\begin{definition}
Let $G$ be a group with identity element $e$, $\oline{*}:G\times G\to G$ a binary operation satisfying the following.
\begin{itemize}
\item
For any $a\in G$, $\oline{*}a:G\to G$ is a group homomorphism.
\item
For any $a,b,x\in G$, $x\oline{*}(ab)=(x\oline{*}a)\oline{*}(b\oline{*}a)$ and $x\oline{*}e=x$.
\end{itemize}
Define $a\uline{*}b:=(b^{-1}ab)\oline{*}b$.
Then $(G,\uline{*},\oline{*})$ is a biquandle.
We call it a \textit{$\oline{*}$-conjugation biquandle}, or just call it a \textit{conjugation biquandle}.
\end{definition}

It is easy to see that a $\oline{*}$-conjugation biquandle satisfies the conditions in Definition~\ref{def:MCB'}.
By Proposition~\ref{prop:two definitions}, we see that a $\oline{*}$-conjugation biquandle is a biquandle.

In this paper, we often omit brackets.
When we omit brackets, we apply binary operations from left on expressions, except for multiplications, which we always apply first.
For example, $a*_1b*_2cd*_3(e*_4f*_5g)$ stands for $((a*_1b)*_2(cd))*_3((e*_4f)*_5g)$, where $*_i$ is a binary operation.

We define $\uline{*}^na:=(\uline{*}a)^n$ and $\oline{*}^na:=(\oline{*}a)^n$ for $n\in\mathbb{Z}$.
Then $\uline{*}^{-1}a$ and $\oline{*}^{-1}a$ are the inverses of $\uline{*}a$ and $\oline{*}a$, respectively.
We also introduce $n$-parallel biquandle operations $\uline{*}^{[n]},\oline{*}^{[n]}$ for any integer $n$, which are extensions of the operations introduced in~\cite{IshiiNelson16}, where they were defined for $n\in\mathbb{Z}_{\geq0}$.

\begin{definition}
Let $X$ be a biquandle.
We define two families of binary operations $\uline{*}^{[n]},\oline{*}^{[n]}:X\times X\to X$ ($n\in\mathbb{Z}$) by the equalities
\begin{align}
&a\uline{*}^{[0]}b=a,
&&a\uline{*}^{[1]}b=a\uline{*}b,
&&a\uline{*}^{[i+j]}b=(a\uline{*}^{[i]}b)\uline{*}^{[j]}(b\uline{*}^{[i]}b), \label{eq:*u[n]def} \\
&a\oline{*}^{[0]}b=a,
&&a\oline{*}^{[1]}b=a\oline{*}b,
&&a\oline{*}^{[i+j]}b=(a\oline{*}^{[i]}b)\oline{*}^{[j]}(b\oline{*}^{[i]}b) \label{eq:*o[n]def}
\end{align}
for $i,j\in\mathbb{Z}$.
\end{definition}

In Section~\ref{sect:parallel}, we see that the binary operations $\uline{*}^{[n]}$ and $\oline{*}^{[n]}$ are well-defined.
Since
$a=a\uline{*}^{[0]}b
=(a\uline{*}^{[-1]}b)\uline{*}^{[1]}(b\uline{*}^{[-1]}b)
=(a\uline{*}^{[-1]}b)\uline{*}(b\uline{*}^{[-1]}b)$,
we have $a\uline{*}^{[-1]}b=a\uline{*}^{-1}(b\uline{*}^{[-1]}b)$.
Then we have the following by using \eqref{eq:*u[n]def}.
\begin{align*}
&a\uline{*}^{[0]}b=a, \hspace{5mm}
a\uline{*}^{[1]}b=a\uline{*}b, \hspace{5mm}
a\uline{*}^{[2]}b=(a\uline{*}b)\uline{*}(b\uline{*}b), \\
&a\uline{*}^{[3]}b=((a\uline{*}b)\uline{*}(b\uline{*}b))\uline{*}((b\uline{*}b)\uline{*}(b\uline{*}b)), \\
&a\uline{*}^{[-1]}b=a\uline{*}^{-1}(b\uline{*}^{[-1]}b), \hspace{5mm}
a\uline{*}^{[-2]}b=(a\uline{*}^{[-1]}b)\uline{*}^{[-1]}(b\uline{*}^{[-1]}b), \\
&a\uline{*}^{[-3]}b=((a\uline{*}^{[-1]}b)\uline{*}^{[-1]}(b\uline{*}^{[-1]}b))\uline{*}^{[-1]}((b\uline{*}^{[-1]}b)\uline{*}^{[-1]}(b\uline{*}^{[-1]}b)),
\end{align*}
where we note that $b\uline{*}^{[-1]}b$ is the unique element satisfying $(b\uline{*}^{[-1]}b)\uline{*}(b\uline{*}^{[-1]}b)=b$
(see Lemma~\ref{lem:x*x=y}).
We define the \textit{type} of a biquandle $X$ by
\[ \operatorname{type}X=\min\{n>0\,|\,a\uline{*}^{[n]}b=a=a\oline{*}^{[n]}b~(\forall a,b\in X)\}. \]
Any finite biquandle is of finite type~\cite{IshiiNelson16}.
For $m,n\in\mathbb{Z}$, if $\operatorname{type}X\mid(m-n)$, then $a\uline{*}^{[m]}b=a\uline{*}^{[n]}b$ and $a\oline{*}^{[m]}b=a\oline{*}^{[n]}b$, since we have
\begin{align*}
&a\uline{*}^{[i+\operatorname{type}X]}b
=(a\uline{*}^{[i]}b)\uline{*}^{[\operatorname{type}X]}(b\uline{*}^{[i]}b)
=a\uline{*}^{[i]}b, \\
&a\oline{*}^{[i+\operatorname{type}X]}b
=(a\oline{*}^{[i]}b)\oline{*}^{[\operatorname{type}X]}(b\oline{*}^{[i]}b)
=a\oline{*}^{[i]}b.
\end{align*}

We give examples of biquandles and their $n$-parallel biquandle operations below.

\begin{example}
Let $G$ be a group, and $X:=G^2$.
Fix $m,n\in\mathbb{Z}$.
We define
\begin{align*}
&(a_1,a_2)\uline{*}(b_1,b_2)=(b_1^{-n}a_1b_1^n,b_1^{-n}a_2b_1^n), \\
&(a_1,a_2)\oline{*}(b_1,b_2)=(a_1,b_1^{-n}b_2^{-m}a_2b_2^mb_1^n).
\end{align*}
Then $X$ is a biquandle.
We have
\begin{align*}
&(a_1,a_2)\uline{*}^{[k]}(b_1,b_2)=(b_1^{-kn}a_1b_1^{kn},b_1^{-kn}a_2b_1^{kn}), \\
&(a_1,a_2)\oline{*}^{[k]}(b_1,b_2)=(a_1,b_1^{-kn}b_2^{-km}a_2b_2^{km}b_1^{kn}).
\end{align*}
\end{example}

\begin{example}
Let $X$ be an $R[s^{\pm1},t^{\pm1}]$-module, where $R$ is a commutative ring.
We define $a\uline{*}b=ta+(s-t)b$, $a\oline{*}b=sa$.
Then $X$ is a biquandle, which we call an \textit{Alexander biquandle}.
We have $a\uline{*}^{[n]}b=t^na+(s^n-t^n)b$ and $a\oline{*}^{[n]}b=s^na$.
\end{example}

\begin{example}[\cite{Wada92}]
A group with the binary operations given in each of the following cases is a biquandle.
\begin{itemize}
\item[(1)]
$a\uline{*}b=a^{-1}$, $a\oline{*}b=a^{-1}$.
\item[(2)]
$a\uline{*}b=b^{-1}ab^{-1}$, $a\oline{*}b=a^{-1}$.
\item[(3)]
$a\uline{*}b=b^{-2}a$, $a\oline{*}b=b^{-1}a^{-1}b$.
\end{itemize}
We have
\begin{align*}
&a\uline{*}^{[n]}b=\begin{cases}
a\uline{*}b & \text{if $n$ is odd,} \\
a & \text{if $n$ is even,}
\end{cases}
&&a\oline{*}^{[n]}b=\begin{cases}
a\oline{*}b & \text{if $n$ is odd,} \\
a & \text{if $n$ is even}
\end{cases}
\end{align*}
for each case.
\end{example}

\begin{example}[\cite{KauffmanManturov05}]
Let $R:=\{a+bi+cj+dk\in\mathbb{H}\,|\,a,b,c,d\in\mathbb{Z}\}$, where $\mathbb{H}$ is the ring of quaternions with $i^2=j^2=k^2=ijk=-1$.
Let $X$ be an $R$-module.
We define $a\uline{*}b=-ja+(j+k)b$, $a\oline{*}b=ja+(k-j)b$.
Then $X$ is a biquandle.
We have
\begin{align*}
&a\uline{*}^{[n]}b=\begin{cases}
a & \text{if $n=4m$,} \\
-ja+(j+k)b & \text{if $n=4m+1$,} \\
-a & \text{if $n=4m+2$,} \\
ja-(j+k)b & \text{if $n=4m+3$,}
\end{cases} \\
&a\oline{*}^{[n]}b=\begin{cases}
a & \text{if $n=4m$,} \\
ja+(k-j)b & \text{if $n=4m+1$,} \\
-a & \text{if $n=4m+2$,} \\
-ja-(k-j)b & \text{if $n=4m+3$.}
\end{cases}
\end{align*}
\end{example}

We end this section with a lemma.

\begin{lemma} \label{lem:x*x=y}
Let $X$ be a biquandle.
\begin{itemize}
\item[(1)]
For $x,y\in X$, if $x\uline{*}y=y\oline{*}x$, then $x=y$.
\item[(2)]
For any $a\in X$, there exists a unique element $\alpha\in X$ such that $\alpha\uline{*}\alpha=\alpha\oline{*}\alpha=a$.
\end{itemize}
\end{lemma}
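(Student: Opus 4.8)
\emph{Proposal.} The plan is to prove (1) first and then deduce (2) almost for free from the bijectivity of $S$. The organizing observation is to introduce the ``squaring'' map $\phi(x):=x\uline{*}x=x\oline{*}x$, which is well defined by (B1). Since $S(x,x)=(x\oline{*}x,x\uline{*}x)=(\phi(x),\phi(x))$, the injectivity of $S$ in (B2) immediately forces $\phi$ to be injective: if $\phi(x)=\phi(y)$ then $S(x,x)=S(y,y)$, whence $(x,x)=(y,y)$. So for (1) it will suffice to prove $\phi(x)=\phi(y)$ under the hypothesis $x\uline{*}y=y\oline{*}x$.

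The key step, and the one I expect to be the real obstacle to spot, is the substitution $z=x$ in the axioms (B3). Putting $z=x$ in the first equation of (B3) gives $\phi(x\uline{*}y)=\phi(x)\uline{*}(y\oline{*}x)$, and putting $z=x$ in the third equation (then relabelling $x\leftrightarrow y$, which is legitimate since the identity holds for all inputs) gives $\phi(y\oline{*}x)=\phi(y)\oline{*}(x\uline{*}y)$. Now write $u:=x\uline{*}y=y\oline{*}x$ for the common value supplied by the hypothesis. Feeding $u$ into these two identities collapses them to $\phi(u)=\phi(x)\uline{*}u$ and $\phi(u)=\phi(y)\oline{*}u$. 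Because $\phi(u)=u\uline{*}u=u\oline{*}u$ by definition, we obtain $\phi(x)\uline{*}u=u\uline{*}u$ and $\phi(y)\oline{*}u=u\oline{*}u$. The maps $\uline{*}u$ and $\oline{*}u$ are injective by (B2), so $\phi(x)=u=\phi(y)$; injectivity of $\phi$ then yields $x=y$, proving (1).

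For (2), I would use the surjectivity of $S$: writing $S^{-1}(a,a)=(p,q)$ unpacks to $q\oline{*}p=a$ and $p\uline{*}q=a$, so in particular $p\uline{*}q=q\oline{*}p$, and (1) gives $p=q=:\alpha$. Then the two coordinate equations read $\alpha\uline{*}\alpha=a$ and $\alpha\oline{*}\alpha=a$, establishing existence. Uniqueness is again just injectivity of $\phi$ (equivalently of $S$ on the diagonal): any $\alpha$ with $\alpha\uline{*}\alpha=\alpha\oline{*}\alpha=a$ satisfies $\phi(\alpha)=a$, and $\phi$ is injective. Apart from locating the $z=x$ specialization of (B3), every remaining step is routine bookkeeping with the bijections from (B2), so I do not anticipate any further difficulty.
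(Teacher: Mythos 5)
Your proof is correct and takes essentially the same route as the paper: both arguments hinge on the $z=x$ specialization of (B3) together with cancellation via the bijections of (B2), and your part (2) is exactly the paper's argument (the unique $S$-preimage of $(a,a)$ combined with part (1)). The only difference is cosmetic: once you have $\phi(x)=u=y\oline{*}x$, i.e.\ $x\oline{*}x=y\oline{*}x$, the paper cancels $\oline{*}x$ and finishes immediately, whereas you symmetrically derive $\phi(y)=u$ via the third (B3) identity and invoke injectivity of $\phi$ (the diagonal of $S$), a slightly longer but equally valid conclusion.
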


\begin{proof}
\begin{itemize}
\item[(1)]
We have $x=y$ from
\begin{align*}
&x\oline{*}x
\overset{\rm(B1)}{=}x\uline{*}x
=(x\uline{*}x)\uline{*}(y\oline{*}x)\uline{*}^{-1}(y\oline{*}x) \\
&\overset{\rm(B3)}{=}
(x\uline{*}y)\uline{*}(x\uline{*}y)\uline{*}^{-1}(y\oline{*}x)
=(y\oline{*}x)\uline{*}(y\oline{*}x)\uline{*}^{-1}(y\oline{*}x)
=y\oline{*}x.
\end{align*}

\item[(2)]
By axiom (B2), there exists a unique pair $(\alpha_1,\alpha_2)\in X$ such that $(\alpha_2\oline{*}\alpha_1,\alpha_1\uline{*}\alpha_2)=(a,a)$.
Since $\alpha_1\uline{*}\alpha_2=\alpha_2\oline{*}\alpha_1$ implies $\alpha_1=\alpha_2$, we put $\alpha:=\alpha_1=\alpha_2$.
Then $\alpha$ is a unique element satisfying
$\alpha\uline{*}\alpha=\alpha\oline{*}\alpha=a$.
\end{itemize}
\end{proof}

\section{A multiple conjugation biquandle (MCB)}

In this section, we introduce the notion of a multiple conjugation biquandle (MCB).
We give two equivalent definitions for the multiple conjugation biquandle.
The first one is useful to study coloring invariants, and the second one is useful to check that a given algebra is a multiple conjugation biquandle.
In the next section, we see that these two definitions are equivalent.

Let $X$ be the disjoint union of groups $G_\lambda$ ($\lambda\in\Lambda$).
We denote by $G_a$ the group $G_\lambda$ to which $a\in X$ belongs.
We denote by $e_\lambda$ the identity of $G_\lambda$.
We also denote it by $e_a$ if $a\in G_\lambda$.
The identity of $G_a$ is the element $e_a$.

\begin{definition} \label{def:MCB}
A \textit{multiple conjugation biquandle} is a biquandle $(X,\uline{*},\oline{*})$ which is the disjoint union of groups $G_\lambda$ ($\lambda\in\Lambda$) satisfying the following axioms.
\begin{itemize}
\item
For any $a,x\in X$, $\uline{*}x:G_a\to G_{a\uline{*}x}$ and $\oline{*}x:G_a\to G_{a\oline{*}x}$ are group homomorphisms.
\item
For any $a,b\in G_\lambda$ and $x\in X$,
\begin{align}
&x\uline{*}ab=(x\uline{*}a)\uline{*}(b\oline{*}a), \label{eq:x*u(ab)} \\
&x\oline{*}ab=(x\oline{*}a)\oline{*}(b\oline{*}a), \label{eq:x*o(ab)} \\
&a^{-1}b\oline{*}a=ba^{-1}\uline{*}a. \label{eq:R14}
\end{align}
\end{itemize}
\end{definition}

\begin{definition} \label{def:MCB'}
A \textit{multiple conjugation biquandle} $X$ is the disjoint union of groups $G_\lambda$ ($\lambda\in\Lambda$) with binary operations $\uline{*},\oline{*}:X\times X\to X$ satisfying the following axioms.
\begin{itemize}
\item
For any $x,y,z\in X$,
\begin{align}
&(x\uline{*}y)\uline{*}(z\uline{*}y)=(x\uline{*}z)\uline{*}(y\oline{*}z), \label{eq:R3-1'} \\
&(x\uline{*}y)\oline{*}(z\uline{*}y)=(x\oline{*}z)\uline{*}(y\oline{*}z), \label{eq:R3-2'} \\
&(x\oline{*}y)\oline{*}(z\oline{*}y)=(x\oline{*}z)\oline{*}(y\uline{*}z). \label{eq:R3-3'}
\end{align}
\item
For any $a,x\in X$, $\uline{*}x:G_a\to G_{a\uline{*}x}$ and $\oline{*}x:G_a\to G_{a\oline{*}x}$ are group homomorphisms.
\item
For any $a,b\in G_\lambda$ and $x\in X$,
\begin{align}
&x\uline{*}ab=(x\uline{*}a)\uline{*}(b\oline{*}a),
\hspace{1em}x\uline{*}e_\lambda=x, \label{eq:x*u(ab)'} \\
&x\oline{*}ab=(x\oline{*}a)\oline{*}(b\oline{*}a),
\hspace{1em}x\oline{*}e_\lambda=x, \label{eq:x*o(ab)'} \\
&a^{-1}b\oline{*}a=ba^{-1}\uline{*}a. \label{eq:R14'}
\end{align}
\end{itemize}
\end{definition}

We remark that a multiple conjugation biquandle consisting of one group is a conjugation biquandle.
A $G$-family of biquandles yields a multiple conjugation biquandle.
We recall the definition of a $G$-family of biquandles below, where the bijectivity in its original axioms in~\cite{IshiiNelson16} is replaced with $x\uline{*}^ey=x\oline{*}^ey=x$.
This refinement is induced from the equivalence of the two definitions of a multiple conjugation biquandle.
For details on a $G$-family of biquandles, we refer the reader to~\cite{IshiiNelson16}.

\begin{definition}
Let $G$ be a group with identity element $e$.
A \textit{$G$-family of biquandles} is a non-empty set $X$ with two families of binary operations $\uline{*}^g,\oline{*}^g:X\times X\to X$ ($g\in G$) satisfying the following axioms.
\begin{itemize}
\item
For any $x,y,z\in X$ and $g,h\in G$,
\begin{align*}
&(x\uline{*}^gy)\uline{*}^h(z\oline{*}^gy)
=(x\uline{*}^hz)\uline{*}^{h^{-1}gh}(y\uline{*}^hz), \\
&(x\oline{*}^gy)\uline{*}^h(z\oline{*}^gy)
=(x\uline{*}^hz)\oline{*}^{h^{-1}gh}(y\uline{*}^hz), \\
&(x\oline{*}^gy)\oline{*}^h(z\oline{*}^gy)
=(x\oline{*}^hz)\oline{*}^{h^{-1}gh}(y\uline{*}^hz).
\end{align*}
\item
For any $x,y\in X$ and $g,h\in G$,
\begin{align*}
&x\uline{*}^{gh}y=(x\uline{*}^gy)\uline{*}^h(y\uline{*}^gy),
\hspace{1em}x\uline{*}^ey=x, \\
&x\oline{*}^{gh}y=(x\oline{*}^gy)\oline{*}^h(y\oline{*}^gy),
\hspace{1em}x\oline{*}^ey=x, \\
&x\uline{*}^gx=x\oline{*}^gx.
\end{align*}
\end{itemize}
\end{definition}

\begin{proposition}[\cite{IshiiNelson16}]
Let $(X,(\uline{*}^g)_{g\in G},(\oline{*}^g)_{g\in G})$ be a $G$-family of biquandles.
Then $X\times G=\bigsqcup_{x\in X}\{x\}\times G$ is a multiple conjugation biquandle with the binary operations $\uline{*},\oline{*}:(X\times G)\times(X\times G)\to X\times G$ defined by
\begin{align*}
&(x,g)\uline{*}(y,h)=(x\uline{*}^hy,h^{-1}gh), &&(x,g)\oline{*}(y,h)=(x\oline{*}^hy,g).
\end{align*}
\end{proposition}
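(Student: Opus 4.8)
The plan is to verify directly the axioms of Definition~\ref{def:MCB'}, the formulation designed for checking that a given algebra is a multiple conjugation biquandle; by Proposition~\ref{prop:two definitions} this is enough. First I would record the group structure on $X\times G$: writing $G_x:=\{x\}\times G$, the set $X\times G$ is the disjoint union $\bigsqcup_{x\in X}G_x$, where $G_x$ carries the group law $(x,g)(x,g')=(x,gg')$ inherited from $G$, with identity $e_x=(x,e)$ and inverse $(x,g)^{-1}=(x,g^{-1})$; thus each $G_x$ is a copy of $G$ and the index set is $\Lambda=X$. Every verification below splits into an $X$-coordinate, governed by the $G$-family operations $\uline{*}^g,\oline{*}^g$, and a $G$-coordinate, governed by multiplication and conjugation in $G$.

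For the three biquandle relations \eqref{eq:R3-1'}, \eqref{eq:R3-2'}, \eqref{eq:R3-3'}, I would evaluate both sides on generic elements $(x,a),(y,b),(z,c)$. In each case the $G$-coordinate reduces to a short conjugation computation in $G$ that makes the two sides agree; for instance both sides of \eqref{eq:R3-1'} have $G$-coordinate $b^{-1}(c^{-1}ac)b$. The $X$-coordinate carries the content: after substitution, the equality to be shown is precisely the corresponding one of the three Reidemeister-III type relations in the definition of a $G$-family, read with the matching conjugated exponent $h^{-1}gh$. Concretely, \eqref{eq:R3-1'} becomes the first such relation with group parameters $(g,h)=(c,b)$, and \eqref{eq:R3-2'}, \eqref{eq:R3-3'} become the second and third for appropriate parameters.

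The remaining axioms concern the group structure. For the homomorphism condition I would apply $\uline{*}(y,h)$ and $\oline{*}(y,h)$ to a product $(x,g)(x,g')=(x,gg')$ inside a single $G_x$: in both cases the $X$-coordinate is constant, while conjugation by $h$ (for $\uline{*}$) and the identity (for $\oline{*}$) distribute over $gg'$, so the image of a product is the product of images and $\uline{*}(y,h),\oline{*}(y,h)$ are homomorphisms $G_x\to G_{x\uline{*}^h y}$, $G_x\to G_{x\oline{*}^h y}$. For the multiplicativity equations \eqref{eq:x*u(ab)'} and \eqref{eq:x*o(ab)'}, I would take $a=(x,g),b=(x,g')$ in a common $G_x$ and $w=(y,h)$; the $X$-coordinate identities then reduce to the $G$-family composition axiom $y\uline{*}^{gg'}x=(y\uline{*}^g x)\uline{*}^{g'}(x\uline{*}^g x)$ and its $\oline{*}$-analogue, the $\uline{*}$-case additionally invoking $x\uline{*}^g x=x\oline{*}^g x$ to rewrite the factor $b\oline{*}a$, while the $G$-coordinates match by a direct computation; the unit equalities $w\uline{*}e_\lambda=w$ and $w\oline{*}e_\lambda=w$ follow from $\uline{*}^e=\oline{*}^e=\mathrm{id}$. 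Finally \eqref{eq:R14'} unwinds, for $a=(x,g),b=(x,g')$, to the comparison of $(x\oline{*}^g x,\,g^{-1}g')$ with $(x\uline{*}^g x,\,g^{-1}g')$, which agree by exactly the $G$-family axiom $x\uline{*}^g x=x\oline{*}^g x$.

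Since the coordinates separate cleanly, I expect no genuine obstacle; the one point demanding care is the bookkeeping of the $G$-coordinate under conjugation --- ensuring that the conjugated exponents produced by $\uline{*}$ (such as $b^{-1}ab$ and $h^{-1}gh$) agree on both sides --- together with the observation that, in the $\uline{*}$-multiplicativity axiom \eqref{eq:x*u(ab)'} and in \eqref{eq:R14'}, the identity $x\uline{*}^g x=x\oline{*}^g x$ is exactly what reconciles the $\uline{*}$- and $\oline{*}$-sides. Assembling these verifications shows that every axiom of Definition~\ref{def:MCB'} holds, whence $X\times G$ is a multiple conjugation biquandle.
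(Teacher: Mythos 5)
Your proof is correct, and all the coordinatewise computations check out (including the one delicate $G$-coordinate in \eqref{eq:R14'}, where $g^{-1}(g'g^{-1})g=g^{-1}g'$ by cancellation, and the use of $y\uline{*}^gy=y\oline{*}^gy$ to reconcile the factor $b\oline{*}a$ in \eqref{eq:x*u(ab)'}). The paper itself gives no proof of this proposition, deferring to \cite{IshiiNelson16}; your route --- verifying the axioms of Definition~\ref{def:MCB'} and invoking Proposition~\ref{prop:two definitions} --- is exactly the mechanism this paper sets up for such checks, and it is slightly more economical than the original argument would have been, since the second definition spares you from verifying the bijectivity conditions (B2) directly, which instead come for free from the unit conditions $x\uline{*}^ey=x\oline{*}^ey=x$ in the refined $G$-family axioms.
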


We call this multiple conjugation biquandle the \textit{associated multiple conjugation biquandle}.

A biquandle turns into a $G$-family of biquandles with parallel biquandle operations $\uline{*}^{[n]},\oline{*}^{[n]}$ (see Proposition~\ref{prop:Z-family}).
Therefore we can construct a multiple conjugation biquandle from any biquandle.
We introduce a $G$-family of (generalized) Alexander biquandles in the following proposition.

\begin{proposition}
Let $G$ be a group with identity $e$, and let $\varphi:G\to Z(G)$ be a homomorphism, where $Z(G)$ is the center of $G$.
\begin{itemize}
\item[(1)]
Let $X$ be a group with a right action of $G$.
We denote by $x^g$ the result of $g$ acting on $x$.
We define binary operations $\uline{*}^g,\oline{*}^g:X\times X\to X$ by $x\uline{*}^gy=(xy^{-1})^gy^{\varphi(g)}$, $x\oline{*}^gy=x^{\varphi(g)}$.
Then $X$ is a $G$-family of biquandles, which we call a $G$-family of generalized Alexander biquandles.

\item[(2)]
Let $R$ be a ring, $X$ a right $R[G]$-module, where $R[G]$ is the group ring of $G$ over $R$.
We define binary operations $\uline{*}^g,\oline{*}^g:X\times X\to X$ by $x\uline{*}^gy=xg+y(\varphi(g)-g)$, $x\oline{*}^gy=x\varphi(g)$.
Then $X$ is a $G$-family of biquandles, which we call a $G$-family of Alexander biquandles.
\end{itemize}
\end{proposition}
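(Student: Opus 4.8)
The plan is to verify directly that the operations defined in each case satisfy all the axioms of a $G$-family of biquandles. The verification rests on three structural facts: the homomorphism $\varphi$ takes values in the center $Z(G)$, so $\varphi(g)$ commutes with every element of $G$ and acts transparently in conjugation exponents; $\varphi$ is multiplicative, giving $\varphi(gh)=\varphi(g)\varphi(h)$; and consequently $\varphi(h^{-1}gh)=\varphi(h)^{-1}\varphi(g)\varphi(h)=\varphi(g)$ by centrality. In case (1) I use in addition that the right $G$-action is by group automorphisms, so $(xy)^g=x^gy^g$ and $(x^{-1})^g=(x^g)^{-1}$; in case (2) the corresponding role is played by the $R[G]$-module axioms, where $\varphi(g)$ is central in $R[G]$. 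Since case (2) is the additive shadow of case (1), I would carry out (1) in full and indicate that (2) is handled identically with $+$ in place of group multiplication.

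First I would dispose of the degenerate axioms. The identities $x\uline{*}^ey=x$ and $x\oline{*}^ey=x$ follow from $\varphi(e)=e$ together with $x^e=x$ (resp. $xe=x$). For $x\uline{*}^gx=x\oline{*}^gx$ I compute, in case (1), $x\uline{*}^gx=(xx^{-1})^gx^{\varphi(g)}=e^gx^{\varphi(g)}=x^{\varphi(g)}=x\oline{*}^gx$, and analogously in case (2) the $xg$ terms cancel, leaving $x\varphi(g)$. The two multiplicativity laws $x\uline{*}^{gh}y=(x\uline{*}^gy)\uline{*}^h(y\uline{*}^gy)$ and $x\oline{*}^{gh}y=(x\oline{*}^gy)\oline{*}^h(y\oline{*}^gy)$ are then short computations: the key simplification is that $y\uline{*}^gy=y^{\varphi(g)}$ (resp. $y\varphi(g)$), so the $y$-contributions telescope and leave exactly $\varphi(g)\varphi(h)=\varphi(gh)$ in the exponent.

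The main work is the three R3-type axioms, and I would treat the first,
\[
(x\uline{*}^gy)\uline{*}^h(z\oline{*}^gy)=(x\uline{*}^hz)\uline{*}^{h^{-1}gh}(y\uline{*}^hz),
\]
as the representative case. On the left, expanding via the automorphism property collects the expression into the form $(xy^{-1})^{gh}\,w$, where $w$ is a word in $y^{\pm}$ and $z^{\pm}$ whose exponents are built from $\varphi(g)h$ and $\varphi(g)\varphi(h)$. On the right, the two factors $x\uline{*}^hz$ and $y\uline{*}^hz$ share the tail $z^{\varphi(h)}$, so that $(x\uline{*}^hz)(y\uline{*}^hz)^{-1}$ collapses to $(xy^{-1})^h$; raising this to the exponent $h^{-1}gh$ produces $(xy^{-1})^{hh^{-1}gh}=(xy^{-1})^{gh}$, matching the left, while the surviving factor $(y\uline{*}^hz)^{\varphi(h^{-1}gh)}=(y\uline{*}^hz)^{\varphi(g)}$ reproduces exactly the word $w$. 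The identity $\varphi(h^{-1}gh)=\varphi(g)$ is precisely what makes the two sides agree. The remaining two axioms are verified by the same mechanism, replacing the outer $\uline{*}$ by $\oline{*}$ where required; since $\oline{*}^g$ only records the $\varphi(g)$-action, those computations are in fact shorter.

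The main obstacle, and the only place where care is needed, is bookkeeping the conjugation exponent $h^{-1}gh$ on the right-hand sides of the R3-type axioms and confirming that every stray occurrence of $\varphi(h)^{\pm1}$ cancels against its partner. Once one records at the outset that $\varphi(g)$ is central and that $\varphi(h^{-1}gh)=\varphi(g)$, all three identities reduce to matching exponents term by term, and no genuine difficulty remains.
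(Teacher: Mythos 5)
Your proposal is correct and follows essentially the same route as the paper: a direct verification of all axioms in case (1), driven by the centrality of $\varphi(g)$ and the identity $\varphi(h^{-1}gh)=\varphi(g)$, with the key cancellation $(x\uline{*}^hz)(y\uline{*}^hz)^{-1}=(xy^{-1})^h$ doing the work in the R3-type axioms exactly as in the paper's displayed computation. Your reduction of (2) to (1) as the abelian (additive) case is also precisely the paper's remark that (2) follows from (1) with an abelian group $X$.
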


\begin{proof}
It is sufficient to show (1), since (2) follows from (1) with an abelian group $X$.
For any $x,y,z\in X$ and $g,h\in G$, we have
\begin{align*}
(x\uline{*}^gy)\uline{*}^h(z\oline{*}^gy)
&=x^{gh}y^{-gh}y^{\varphi(g)h}z^{-\varphi(g)h}z^{\varphi(g)\varphi(h)} \\
&=(x\uline{*}^hz)\uline{*}^{h^{-1}gh}(y\uline{*}^hz), \\
(x\oline{*}^gy)\uline{*}^h(z\oline{*}^gy)
&=x^{\varphi(g)h}z^{-\varphi(g)h}z^{\varphi(g)\varphi(h)}
=(x\uline{*}^hz)\oline{*}^{h^{-1}gh}(y\uline{*}^hz), \\
(x\oline{*}^gy)\oline{*}^h(z\oline{*}^gy)
&=x^{\varphi(g)\varphi(h)}
=(x\oline{*}^hz)\oline{*}^{h^{-1}gh}(y\uline{*}^hz)
\end{align*}
and
\begin{align*}
&x\uline{*}^{gh}y
=(xy^{-1})^{gh}y^{\varphi(g)\varphi(h)}
=(x\uline{*}^gy)\uline{*}^h(y\uline{*}^gy),
&&x\uline{*}^ey=x, \\
&x\oline{*}^{gh}y
=x^{\varphi(g)\varphi(h)}
=(x\oline{*}^gy)\oline{*}^h(y\oline{*}^gy),
&&x\oline{*}^ey=x, \\
&x\uline{*}^gx=x^{\varphi(g)}=x\oline{*}^gx,
\end{align*}
where $x^{-g}$ denotes $(x^g)^{-1}$, which coincides with $(x^{-1})^g$.
%
\end{proof}

\section{The two definitions are equivalent}

In this section, we see that the two definitions of a multiple conjugation biquandle introduced in the previous section are equivalent.

\begin{lemma} \label{lem:x*e=x}
Let $X=\bigsqcup_{\lambda\in\Lambda}G_\lambda$ be a multiple conjugation biquandle in the sense of Definition~\ref{def:MCB}.
\begin{itemize}
\item[(1)]
For any $x\in X$ and $\lambda\in\Lambda$,
\begin{align}
&x\uline{*}e_\lambda=x, &&x\oline{*}e_\lambda=x. \label{eq:x*e}
\end{align}

\item[(2)]
For any $a,x\in X$, $\uline{*}x:G_a\to G_{a\uline{*}x}$ and $\oline{*}x:G_a\to G_{a\oline{*}x}$ are bijections.
Furthermore, $\uline{*}^{-1}x=\uline{*}(x^{-1}\oline{*}x)$, $\oline{*}^{-1}x=\oline{*}(x^{-1}\oline{*}x)$.

\item[(3)]
For any $b\in X$, the map $f:G_b\to G_{b\oline{*}b}$ which sends $x$ to $x^{-1}b\oline{*}x$ is bijective.
Furthermore, its inverse $f^{-1}:G_{b\oline{*}b}\to G_b$ is given by $f^{-1}(x)=b(x^{-1}\oline{*}x\oline{*}^{-1}b)$.
\end{itemize}
\end{lemma}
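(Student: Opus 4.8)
The plan is to prove the three parts in order, since (1) is needed for (2) and both are needed for (3). For (1), I would first isolate the element $w:=e_\lambda\oline{*}e_\lambda$. Setting $a=b=e_\lambda$ in \eqref{eq:x*u(ab)} and \eqref{eq:x*o(ab)} gives $x\uline{*}e_\lambda=(x\uline{*}e_\lambda)\uline{*}w$ and $x\oline{*}e_\lambda=(x\oline{*}e_\lambda)\oline{*}w$ for every $x$; since $\uline{*}e_\lambda$ and $\oline{*}e_\lambda$ are surjective by (B2), the elements $x\uline{*}e_\lambda$ and $x\oline{*}e_\lambda$ range over all of $X$, so $y\uline{*}w=y$ and $y\oline{*}w=y$ for every $y\in X$. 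In particular $w\uline{*}w=w\oline{*}w=w$, while by (B1) we also have $e_\lambda\uline{*}e_\lambda=e_\lambda\oline{*}e_\lambda=w$. Thus both $e_\lambda$ and $w$ are carried to $w$ by the diagonal map, so Lemma~\ref{lem:x*x=y}(2) (or, equivalently, the injectivity of $S$ in (B2) applied to $S(e_\lambda,e_\lambda)=(w,w)=S(w,w)$) forces $e_\lambda=w$. Then $x\uline{*}e_\lambda=x\uline{*}w=x$ and $x\oline{*}e_\lambda=x\oline{*}w=x$, which is \eqref{eq:x*e}.

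For (2), the key is to feed the factorization $x\,x^{-1}=e_x$ into \eqref{eq:x*u(ab)} and \eqref{eq:x*o(ab)}. Taking $a=x$ and $b=x^{-1}$ in \eqref{eq:x*u(ab)} and using part (1) yields $y=y\uline{*}e_x=(y\uline{*}x)\uline{*}(x^{-1}\oline{*}x)$ for all $y$, so $\uline{*}(x^{-1}\oline{*}x)$ is a left inverse of $\uline{*}x$; because $\uline{*}x$ is already a bijection of $X$ by (B2), this left inverse is the two-sided inverse, giving $\uline{*}^{-1}x=\uline{*}(x^{-1}\oline{*}x)$. The identical argument applied to \eqref{eq:x*o(ab)} gives $\oline{*}^{-1}x=\oline{*}(x^{-1}\oline{*}x)$. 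Finally, $\uline{*}x$ maps $G_a$ into $G_{a\uline{*}x}$, and evaluating the left-inverse identity at $y=a$ shows its inverse $\uline{*}(x^{-1}\oline{*}x)$ maps $G_{a\uline{*}x}$ back into $G_a$; since these two restrictions are mutually inverse, $\uline{*}x\colon G_a\to G_{a\uline{*}x}$ is a bijection, and likewise for $\oline{*}x$.

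For (3), I would once more reduce bijectivity to exhibiting a two-sided inverse, with \eqref{eq:R14} as the principal device. First I rewrite $f(x)=x^{-1}b\oline{*}x=bx^{-1}\uline{*}x$ by \eqref{eq:R14}, which lets me pass freely between the $\oline{*}$ and $\uline{*}$ descriptions; combined with the homomorphism property of $\oline{*}x$ restricted to $G_b$, this determines the group in which $f(x)$ lies. The candidate inverse is $g(y)=b\big((y^{-1}\oline{*}y)\oline{*}^{-1}b\big)$, where $\oline{*}^{-1}b=\oline{*}(b^{-1}\oline{*}b)$ is supplied by part (2). I would then verify $g\circ f=\operatorname{id}$ and $f\circ g=\operatorname{id}$ by direct substitution, expanding products through \eqref{eq:x*u(ab)} and \eqref{eq:x*o(ab)}, collapsing identity factors by part (1), and trading $\oline{*}$ for $\uline{*}$ via \eqref{eq:R14} each time a term of the form $c^{-1}d\oline{*}c$ appears. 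The main obstacle is exactly this verification: keeping track of which group every intermediate element occupies, and orchestrating the repeated applications of \eqref{eq:R14} and the homomorphism axioms so that the nested expression $b\big((y^{-1}\oline{*}y)\oline{*}^{-1}b\big)$ unwinds back to $x$. Once parts (1) and (2) are available, everything outside this computation is formal.
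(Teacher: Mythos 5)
Your parts (1) and (2) are correct. In (1) you take a genuinely different route from the paper: you set $w:=e_\lambda\oline{*}e_\lambda$, extract $y\uline{*}w=y$ and $y\oline{*}w=y$ for all $y$ from \eqref{eq:x*u(ab)} and \eqref{eq:x*o(ab)} with $a=b=e_\lambda$ together with the surjectivity of $\uline{*}e_\lambda$ and $\oline{*}e_\lambda$, and then force $e_\lambda=w$ from the uniqueness statement of Lemma~\ref{lem:x*x=y}(2). The paper instead takes the unique $\alpha$ with $\alpha\uline{*}\alpha=\alpha\oline{*}\alpha=e_\lambda$, observes $e_\lambda=e_\alpha\oline{*}\alpha$ by the homomorphism property, and computes $x\uline{*}e_\lambda=(x\uline{*}^{-1}\alpha)\uline{*}\alpha e_\alpha=x$ via \eqref{eq:x*u(ab)}. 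Both hinge on Lemma~\ref{lem:x*x=y}(2); your version trades the ``square root'' $\alpha$ for surjectivity and is, if anything, a little cleaner. Your (2) is essentially the paper's argument verbatim: the identity $(y\uline{*}x)\uline{*}(x^{-1}\oline{*}x)=y\uline{*}xx^{-1}=y\uline{*}e_x=y$, bijectivity on all of $X$ from (B2), and component bookkeeping from the homomorphism axiom.

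The shortfall is (3). You name the correct candidate inverse $g(y)=b(y^{-1}\oline{*}y\oline{*}^{-1}b)$ --- the paper's --- but you defer the verification, which is the entire content of this part, and your expectation that \eqref{eq:R14} is the ``principal device'' is misplaced: the paper's verification never invokes \eqref{eq:R14} at all. All that is needed is that $\oline{*}x$ restricted to $G_b$ is a group homomorphism (so $(x^{-1}b\oline{*}x)^{-1}=b^{-1}x\oline{*}x$), axiom \eqref{eq:x*o(ab)}, and parts (1)--(2). Concretely,
\begin{align*}
(g\circ f)(x)
&=b\bigl((b^{-1}x\oline{*}x)\oline{*}(x^{-1}b\oline{*}x)\oline{*}^{-1}b\bigr)
=b(b^{-1}x\oline{*}b\oline{*}^{-1}b)=x,
\end{align*}
where the middle step is \eqref{eq:x*o(ab)} applied to the factorization $x\cdot x^{-1}b=b$ in $G_b$; and for the other composite, $(f\circ g)(x)=(x\oline{*}x\oline{*}^{-1}b)\oline{*}\bigl(b(x^{-1}\oline{*}x\oline{*}^{-1}b)\bigr)$, which two applications of \eqref{eq:x*o(ab)} and \eqref{eq:x*e} collapse to $(x\oline{*}x)\oline{*}(x^{-1}\oline{*}x)=x\oline{*}e_x=x$. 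You also gesture at, but do not settle, the well-definedness of $g$ (that $y^{-1}\oline{*}y\oline{*}^{-1}b$ lies in $G_b$, so that left multiplication by $b$ makes sense); the paper disposes of this by evaluating the formula on the identity $e_b\oline{*}b=e_{b\oline{*}b}$ and getting $e_b\in G_b$. So: right inverse candidate, right general toolkit, but the computation you flag as the main obstacle is in fact a three-line collapse under \eqref{eq:x*o(ab)} with no $\uline{*}$/\eqref{eq:R14} juggling required, and as written your proposal stops short of proving (3).
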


\begin{proof}
\begin{itemize}
\item[(1)]
Let $\alpha\in X$ be the unique element satisfying
$\alpha\uline{*}\alpha=\alpha\oline{*}\alpha=e_\lambda$.
Then
\begin{align*}
&x\uline{*}e_\lambda
=x\uline{*}e_{\alpha\oline{*}\alpha}
=((x\uline{*}^{-1}\alpha)\uline{*}\alpha)\uline{*}(e_\alpha\oline{*}\alpha)
\overset{\eqref{eq:x*u(ab)}}{=}
(x\uline{*}^{-1}\alpha)\uline{*}\alpha e_\alpha
=x, \\
&x\oline{*}e_\lambda
=x\oline{*}e_{\alpha\oline{*}\alpha}
=((x\oline{*}^{-1}\alpha)\oline{*}\alpha)\oline{*}(e_\alpha\oline{*}\alpha)
\overset{\eqref{eq:x*o(ab)}}{=}
(x\oline{*}^{-1}\alpha)\oline{*}\alpha e_\alpha
=x.
\end{align*}

\item[(2)]
Since the maps $\uline{*}x:X\to X$, $\oline{*}x:X\to X$ are bijective, it is sufficient to show that
\[ b\uline{*}x\in G_{a\uline{*}x}
\Leftrightarrow b\in G_a
\Leftrightarrow b\oline{*}x\in G_{a\oline{*}x}. \]
We have $b\in G_a\Rightarrow b\uline{*}x\in G_{a\uline{*}x}$ and $b\in G_a\Rightarrow b\oline{*}x\in G_{a\oline{*}x}$ by the well-definedness of the maps $\uline{*}x:G_a\to G_{a\uline{*}x}$ and $\oline{*}x:G_a\to G_{a\oline{*}x}$, respectively.
We have $b\uline{*}x\in G_{a\uline{*}x}\Rightarrow b\in G_a$ and $b\oline{*}x\in G_{a\oline{*}x}\Rightarrow b\in G_a$ by the equalities
\begin{align*}
&(a\uline{*}x)\uline{*}(x^{-1}\oline{*}x)=a
=(a\oline{*}x)\oline{*}(x^{-1}\oline{*}x), \\
&(b\uline{*}x)\uline{*}(x^{-1}\oline{*}x)=b
=(b\oline{*}x)\oline{*}(x^{-1}\oline{*}x),
\end{align*}
which follow from
\begin{align*}
&(y\uline{*}x)\uline{*}(x^{-1}\oline{*}x)
\overset{\eqref{eq:x*u(ab)}}{=}y\uline{*}xx^{-1}
=y\uline{*}e_x
\overset{\eqref{eq:x*e}}{=}y, \\
&(y\oline{*}x)\oline{*}(x^{-1}\oline{*}x)
\overset{\eqref{eq:x*o(ab)}}{=}y\oline{*}xx^{-1}
=y\oline{*}e_x
\overset{\eqref{eq:x*e}}{=}y
\end{align*}
for any $y\in X$.

\item[(3)]
Let $g:G_{b\oline{*}b}\to G_b$ be the map defined by $g(x)=b(x^{-1}\oline{*}x\oline{*}^{-1}b)$, which is well-defined, since
\[ (e_b\oline{*}b)^{-1}\oline{*}(e_b\oline{*}b)\oline{*}^{-1}b
=(e_b\oline{*}b)\oline{*}(e_b\oline{*}b)\oline{*}^{-1}b
\overset{\eqref{eq:x*o(ab)}}{=}e_b\oline{*}b\oline{*}^{-1}b
=e_b\in G_b. \]
Then we have
\begin{align*}
(g\circ f)(x)
&=b((x^{-1}b\oline{*}x)^{-1}\oline{*}(x^{-1}b\oline{*}x)\oline{*}^{-1}b) \\
&=b((b^{-1}x\oline{*}x)\oline{*}(x^{-1}b\oline{*}x)\oline{*}^{-1}b) \\
&\overset{\eqref{eq:x*o(ab)}}{=}b(b^{-1}x\oline{*}b\oline{*}^{-1}b)
=x, \\
(f\circ g)(x)
&=(b(x^{-1}\oline{*}x\oline{*}^{-1}b))^{-1}b\oline{*}(b(x^{-1}\oline{*}x\oline{*}^{-1}b)) \\
&=(x\oline{*}x\oline{*}^{-1}b)\oline{*}(b(x^{-1}\oline{*}x\oline{*}^{-1}b)) \\
&\overset{\eqref{eq:x*o(ab)}}{=}
((x\oline{*}x\oline{*}^{-1}b)\oline{*}b)\oline{*}((x^{-1}\oline{*}x\oline{*}^{-1}b)\oline{*}b) \\
&=(x\oline{*}x)\oline{*}(x^{-1}\oline{*}x)
\overset{\eqref{eq:x*o(ab)}}{=}x\oline{*}e_x
\overset{\eqref{eq:x*e}}{=}x.
\end{align*}
\end{itemize}
\end{proof}

\begin{proposition} \label{prop:two definitions}
Let $X$ be the disjoint union of groups $G_\lambda$ ($\lambda\in\Lambda$) with binary operations $\uline{*},\oline{*}:X\times X\to X$.
Then $X$ is an MCB in the sense of Definition~\ref{def:MCB} if and only if $X$ is an MCB in the sense of Definition~\ref{def:MCB'}.
\end{proposition}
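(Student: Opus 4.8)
The plan is to prove the two implications separately, after observing that the two axiom lists overlap almost completely: both demand that the restricted maps $\uline{*}x,\oline{*}x$ be group homomorphisms, and both contain the multiplication rules \eqref{eq:x*u(ab)}/\eqref{eq:x*u(ab)'}, \eqref{eq:x*o(ab)}/\eqref{eq:x*o(ab)'} and \eqref{eq:R14}/\eqref{eq:R14'}. So the entire content of the statement is the interplay between ``being a biquandle'' (axioms (B1),(B2),(B3)) in Definition~\ref{def:MCB} and the explicit relations \eqref{eq:R3-1'}--\eqref{eq:R3-3'} together with the normalizations $x\uline{*}e_\lambda=x$, $x\oline{*}e_\lambda=x$ in Definition~\ref{def:MCB'}.

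For the direction Definition~\ref{def:MCB} $\Rightarrow$ Definition~\ref{def:MCB'}: since $X$ is a biquandle, axiom (B3) is exactly \eqref{eq:R3-1'}--\eqref{eq:R3-3'}, and the remaining MCB$'$ axioms coincide with those of Definition~\ref{def:MCB} except for the normalizations $x\uline{*}e_\lambda=x$ and $x\oline{*}e_\lambda=x$; but these are precisely Lemma~\ref{lem:x*e=x}(1). Hence this direction is immediate.

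The substance is the converse. Here the homomorphism axiom and the relations \eqref{eq:x*u(ab)'}--\eqref{eq:R14'} already match Definition~\ref{def:MCB}, so it remains to deduce (B1), (B2), (B3) from the MCB$'$ axioms. Axiom (B3) is \eqref{eq:R3-1'}--\eqref{eq:R3-3'}. For (B1) I would specialize \eqref{eq:R14'} to $b=a\cdot a\in G_\lambda$: then $a^{-1}b=a=ba^{-1}$, so \eqref{eq:R14'} reads $a\oline{*}a=a\uline{*}a$, which is (B1) since every element lies in some $G_\lambda$. For the pointwise bijectivity in (B2), I would first exhibit $\uline{*}(x^{-1}\oline{*}x)$ as a left inverse of $\uline{*}x\colon X\to X$: by \eqref{eq:x*u(ab)'}, $(y\uline{*}x)\uline{*}(x^{-1}\oline{*}x)=y\uline{*}xx^{-1}=y\uline{*}e_x=y$, and symmetrically $\oline{*}(x^{-1}\oline{*}x)$ is a left inverse of $\oline{*}x$ by \eqref{eq:x*o(ab)'}. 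Since each such left inverse is again of the form $\uline{*}(\,\cdot\,)$ (resp.\ $\oline{*}(\,\cdot\,)$), it has itself a left inverse by the same computation and is therefore bijective, whence $\uline{*}x$ and $\oline{*}x$ are its inverse and are bijective. This recovers the content of Lemma~\ref{lem:x*e=x}(2),(3) using only the MCB$'$ axioms; I emphasize that the proofs of parts (2),(3) never invoke the bijectivity of $S$, so there is no circularity.

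The main obstacle will be the third part of (B2): the bijectivity of $S(x,y)=(y\oline{*}x,x\uline{*}y)$. This is a genuinely independent biquandle condition (it does not follow from (B3) and pointwise bijectivity alone) and so must be extracted from the extra group structure. My plan is to construct an explicit two-sided inverse of $S$ from the inverse operations $\uline{*}^{-1}x=\uline{*}(x^{-1}\oline{*}x)$ and $\oline{*}^{-1}x=\oline{*}(x^{-1}\oline{*}x)$ together with the group multiplication, and then to verify the identities $S\circ S^{-1}=S^{-1}\circ S=\mathrm{id}$ by repeated use of the homomorphism property, the multiplication rules \eqref{eq:x*u(ab)'},\eqref{eq:x*o(ab)'}, the conjugation form $c\oline{*}a=(aca^{-1})\uline{*}a$ of \eqref{eq:R14'} (obtained by setting $c=a^{-1}b$), and the interchange relations \eqref{eq:R3-1'}--\eqref{eq:R3-3'}. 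This computation is where all the axioms are used at once: the role of \eqref{eq:R14'} is to convert between $\uline{*}$ and $\oline{*}$ within a single group so that the two coordinates of $S^{-1}$ close up, while Lemma~\ref{lem:x*x=y}(1)—whose proof needs only (B1), (B3) and the pointwise bijectivity already secured—can be invoked to pin down uniqueness, hence injectivity. Once $S$ is shown bijective, all biquandle axioms hold and $X$ is an MCB in the sense of Definition~\ref{def:MCB}, completing the equivalence.
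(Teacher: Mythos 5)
Your proposal reproduces the paper's architecture faithfully on three of the four points. The ``only if'' direction via Lemma~\ref{lem:x*e=x}(1) is exactly the paper's reduction; your derivation of (B1) by setting $b=a^2$ in \eqref{eq:R14'} is literally the paper's computation $x\uline{*}x=x^2x^{-1}\uline{*}x=x^{-1}x^2\oline{*}x=x\oline{*}x$; and your argument for the pointwise bijectivity of $\uline{*}a$ and $\oline{*}a$ is valid and in fact shorter than the paper's: where the paper verifies by a separate computation (using \eqref{eq:R3-1'}, resp.\ \eqref{eq:R3-3'}, together with $x\uline{*}x=x\oline{*}x$) that $\uline{*}(a^{-1}\oline{*}a)$ is also a right inverse of $\uline{*}a$, you note that the left inverse produced by $(y\uline{*}a)\uline{*}(a^{-1}\oline{*}a)=y\uline{*}aa^{-1}=y$ is again of the form $\uline{*}c$, hence has its own left inverse, and the standard monoid argument ($g\circ f=\mathrm{id}$ and $h\circ g=\mathrm{id}$ force $h=f$) yields two-sided invertibility. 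That shortcut is sound and avoids one of the paper's computations.

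The genuine gap is the fourth point, which is the bulk of the paper's proof: the bijectivity of $S(x,y)=(y\oline{*}x,x\uline{*}y)$. You correctly identify it as the main obstacle, but you only announce a plan --- ``construct an explicit two-sided inverse \dots and verify'' --- without exhibiting the inverse or performing any verification, and this step is not routine. The paper's inverse is
\[ T(x,y)=\bigl(y\uline{*}(x\oline{*}x\oline{*}^{-1}y)^{-1},\;x\oline{*}(y\uline{*}y\uline{*}^{-1}x)^{-1}\bigr), \]
and even granted this formula, the compositions do not close up until one establishes the non-obvious rewritings
\begin{align*}
y\uline{*}(x\oline{*}x\oline{*}^{-1}y)^{-1}&=(y\uline{*}y\uline{*}^{-1}x)\oline{*}(y\uline{*}y\uline{*}^{-1}x)^{-1},\\
x\oline{*}(y\uline{*}y\uline{*}^{-1}x)^{-1}&=(x\oline{*}x\oline{*}^{-1}y)\uline{*}(x\oline{*}x\oline{*}^{-1}y)^{-1},
\end{align*}
which require \eqref{eq:R3-2'} (the one exchange relation your sketch never concretely deploys), the homomorphism property to pull group inverses through $\oline{*}^{-1}y$ and $\uline{*}^{-1}x$, and $x\uline{*}x=x\oline{*}x$; only then do $S\circ T=T\circ S=\mathrm{id}$ follow from \eqref{eq:R3-1'}, \eqref{eq:R3-3'} and the cancellation identities. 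Naming the ingredients is not a proof: the key idea --- a closed-form $T$ whose two coordinates simultaneously close up --- is exactly what is missing. Finally, your fallback of invoking Lemma~\ref{lem:x*x=y}(1) to ``pin down injectivity'' cannot fill this hole: injectivity alone does not give surjectivity of $S$ on an infinite $X$, and once a two-sided inverse is verified the lemma is superfluous (your non-circularity remark about its proof is correct, but beside the point).
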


\begin{proof}
By Lemma~\ref{lem:x*e=x}, it is sufficient to show the ``if'' part.
For any $x\in X$, we have
\begin{align}
x\uline{*}x=x^2x^{-1}\uline{*}x
\overset{\eqref{eq:R14'}}{=}x^{-1}x^2\oline{*}x=x\oline{*}x. \label{eq:R1'}
\end{align}

The map $\uline{*}(a^{-1}\oline{*}a):X\to X$ is the inverse of $\uline{*}a:X\to X$, since we have
\begin{align}
(x\uline{*}a)\uline{*}(a^{-1}\oline{*}a)
\overset{\eqref{eq:x*u(ab)'}}{=}x\uline{*}aa^{-1}=x\uline{*}e_a
\overset{\eqref{eq:x*u(ab)'}}{=}x, \label{eq:x*uaaa=x}
\end{align}
and
\begin{align*}
x\uline{*}(a^{-1}\oline{*}a)\uline{*}a
&\overset{\eqref{eq:x*uaaa=x}}{=}
(x\uline{*}(a^{-1}\oline{*}a))\uline{*}((a\uline{*}a)\uline{*}(a^{-1}\oline{*}a)) \\
&\overset{\eqref{eq:R3-1'}}{=}
(x\uline{*}(a\uline{*}a))\uline{*}((a^{-1}\oline{*}a)\oline{*}(a\uline{*}a)) \\
&\overset{\eqref{eq:R1'}}{=}
(x\uline{*}(a\oline{*}a))\uline{*}((a\oline{*}a)^{-1}\oline{*}(a\oline{*}a)) \\
&\overset{\eqref{eq:x*uaaa=x}}{=}x.
\end{align*}
Therefore the map $\uline{*}a:X\to X$ is bijective.

The map $\oline{*}(a^{-1}\oline{*}a):X\to X$ is the inverse of $\oline{*}a:X\to X$, since we have
\begin{align}
(x\oline{*}a)\oline{*}(a^{-1}\oline{*}a)
\overset{\eqref{eq:x*o(ab)'}}{=}x\oline{*}aa^{-1}=x\oline{*}e_a
\overset{\eqref{eq:x*o(ab)'}}{=}x, \label{eq:x*oaaa=x}
\end{align}
and
\begin{align*}
x\oline{*}(a^{-1}\oline{*}a)\oline{*}a
&\overset{\eqref{eq:x*uaaa=x}}{=}
(x\oline{*}(a^{-1}\oline{*}a))\oline{*}((a\uline{*}a)\uline{*}(a^{-1}\oline{*}a)) \\
&\overset{\eqref{eq:R3-3'}}{=}
(x\oline{*}(a\uline{*}a))\oline{*}((a^{-1}\oline{*}a)\oline{*}(a\uline{*}a)) \\
&\overset{\eqref{eq:R1'}}{=}
(x\oline{*}(a\oline{*}a))\oline{*}((a\oline{*}a)^{-1}\oline{*}(a\oline{*}a)) \\
&\overset{\eqref{eq:x*oaaa=x}}{=}x.
\end{align*}
Therefore the map $\oline{*}a:X\to X$ is bijective.

We show that the map $S:X\times X\to X\times X$ defined by $S(x,y)=(y\oline{*}x,x\uline{*}y)$ is the bijection whose inverse $T:X\times X\to X\times X$ is given by
\[ T(x,y)=(y\uline{*}(x\oline{*}x\oline{*}^{-1}y)^{-1},x\oline{*}(y\uline{*}y\uline{*}^{-1}x)^{-1}), \]
where we note that
\begin{align*}
y\uline{*}(x\oline{*}x\oline{*}^{-1}y)^{-1}
&=((y\oline{*}y)\oline{*}^{-1}y)\uline{*}((x^{-1}\oline{*}x)\oline{*}^{-1}y) \\
&=((y\oline{*}y)\oline{*}(y^{-1}\oline{*}y))\uline{*}((x^{-1}\oline{*}x)\oline{*}(y^{-1}\oline{*}y)) \\
&\overset{\eqref{eq:R3-2'}}{=}
((y\oline{*}y)\uline{*}(x^{-1}\oline{*}x))\oline{*}((y^{-1}\oline{*}y)\uline{*}(x^{-1}\oline{*}x)) \\
&=((y\oline{*}y)\uline{*}^{-1}x)\oline{*}((y^{-1}\oline{*}y)\uline{*}^{-1}x) \\
&=(y\oline{*}y\uline{*}^{-1}x)\oline{*}(y\oline{*}y\uline{*}^{-1}x)^{-1} \\
&\overset{\eqref{eq:R1'}}{=}
(y\uline{*}y\uline{*}^{-1}x)\oline{*}(y\uline{*}y\uline{*}^{-1}x)^{-1}
\end{align*}
and
\begin{align*}
x\oline{*}(y\uline{*}y\uline{*}^{-1}x)^{-1}
&\overset{\eqref{eq:R1'}}{=}x\oline{*}(y\oline{*}y\uline{*}^{-1}x)^{-1} \\
&=((x\uline{*}x)\uline{*}^{-1}x)\oline{*}((y^{-1}\oline{*}y)\uline{*}^{-1}x) \\
&=((x\uline{*}x)\uline{*}(x^{-1}\oline{*}x))\oline{*}((y^{-1}\oline{*}y)\uline{*}(x^{-1}\oline{*}x)) \\
&\overset{\eqref{eq:R3-2'}}{=}
((x\uline{*}x)\oline{*}(y^{-1}\oline{*}y))\uline{*}((x^{-1}\oline{*}x)\oline{*}(y^{-1}\oline{*}y)) \\
&=((x\uline{*}x)\oline{*}^{-1}y)\uline{*}((x^{-1}\oline{*}x)\oline{*}^{-1}y) \\
&=(x\uline{*}x\oline{*}^{-1}y)\uline{*}(x\oline{*}x\oline{*}^{-1}y)^{-1} \\
&\overset{\eqref{eq:R1'}}{=}
(x\oline{*}x\oline{*}^{-1}y)\uline{*}(x\oline{*}x\oline{*}^{-1}y)^{-1}.
\end{align*}
Then $T\circ S=\mathrm{id}_{X\times X}$ and $S\circ T=\mathrm{id}_{X\times X}$ follow from
\begin{align*}
&(x\uline{*}y)\uline{*}((y\oline{*}x)\oline{*}(y\oline{*}x)\oline{*}^{-1}(x\uline{*}y))^{-1} \\
&\overset{\eqref{eq:R3-3'}}{=}
(x\uline{*}y)\uline{*}((y\oline{*}y)\oline{*}(x\uline{*}y)\oline{*}^{-1}(x\uline{*}y))^{-1} \\
&=(x\uline{*}y)\uline{*}(y\oline{*}y)^{-1}
=(x\uline{*}y)\uline{*}(y^{-1}\oline{*}y)
\overset{\eqref{eq:x*uaaa=x}}{=}x, \\
&(y\oline{*}x)\oline{*}((x\uline{*}y)\uline{*}(x\uline{*}y)\uline{*}^{-1}(y\oline{*}x))^{-1} \\
&\overset{\eqref{eq:R3-1'}}{=}
(y\oline{*}x)\oline{*}((x\uline{*}x)\uline{*}(y\oline{*}x)\uline{*}^{-1}(y\oline{*}x))^{-1} \\
&\overset{\eqref{eq:R1'}}{=}(y\oline{*}x)\oline{*}(x\oline{*}x)^{-1}
=(y\oline{*}x)\oline{*}(x^{-1}\oline{*}x)
\overset{\eqref{eq:x*oaaa=x}}{=}y
\end{align*}
and
\begin{align*}
&(x\oline{*}(y\uline{*}y\uline{*}^{-1}x)^{-1})\oline{*}(y\uline{*}(x\oline{*}x\oline{*}^{-1}y)^{-1}) \\
&=(x\oline{*}(y\uline{*}y\uline{*}^{-1}x)^{-1})\oline{*}((y\uline{*}y\uline{*}^{-1}x)\oline{*}(y\uline{*}y\uline{*}^{-1}x)^{-1})
\overset{\eqref{eq:x*oaaa=x}}{=}x, \\
&(y\uline{*}(x\oline{*}x\oline{*}^{-1}y)^{-1})\uline{*}(x\oline{*}(y\uline{*}y\uline{*}^{-1}x)^{-1}) \\
&=(y\uline{*}(x\oline{*}x\oline{*}^{-1}y)^{-1})\uline{*}((x\oline{*}x\oline{*}^{-1}y)\uline{*}(x\oline{*}x\oline{*}^{-1}y)^{-1}) \\
&\overset{\eqref{eq:R3-1'}}{=}
(y\uline{*}(x\oline{*}x\oline{*}^{-1}y))\uline{*}((x\oline{*}x\oline{*}^{-1}y)^{-1}\oline{*}(x\oline{*}x\oline{*}^{-1}y))
\overset{\eqref{eq:x*uaaa=x}}{=}y,
\end{align*}
respectively.
This completes the proof.
\end{proof}

\section{MCB colorings for handlebody-links}
\label{sect:coloring}

In this section we recall a diagrammatic presentation of a handlebody-link and consider its colorings using a multiple conjugation biquandle.

A \textit{handlebody-link} is the disjoint union of handlebodies embedded in the $3$-sphere $S^3$.
In this paper, we assume that every component of a handlebody-link is of genus at least $1$.
An \textit{$S^1$-orientation} of a handlebody-link is a collection of $S^1$-orientations of all genus-$1$ components, that are solid tori, of the handlebody-link.
Here an $S^1$-orientation of a solid torus means an orientation of its core $S^1$.
Two $S^1$-oriented handlebody-links are \textit{equivalent} if there is an orientation-preserving self-homeomorphism of $S^3$ which sends one to the other preserving the $S^1$-orientation.

A \textit{Y-orientation} of a trivalent graph $G$, whose vertices are of valency $3$, is a direction of all edges of $G$ satisfying that every vertex of $G$ is both the initial vertex of a directed edge and the terminal vertex of a directed edge (See Figure \ref{fig:Y-orientations}).
In this paper, a trivalent graph may have a circle component, which has no vertices.

\begin{figure}
\center
\begin{minipage}{40pt}
\begin{picture}(50,40)
 \put(20,20){\vector(0,-1){20}}
 \put(0,40){\vector(1,-1){20}}
 \put(40,40){\vector(-1,-1){20}}
\end{picture}
\end{minipage}
\hspace{5em}
\begin{minipage}{40pt}
\begin{picture}(50,40)
 \put(20,40){\vector(0,-1){20}}
 \put(20,20){\vector(-1,-1){20}}
 \put(20,20){\vector(1,-1){20}}
\end{picture}
\end{minipage}
\caption{Y-orientations}
\label{fig:Y-orientations}
\end{figure}
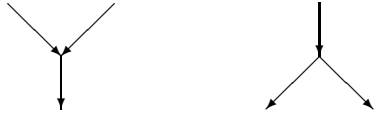

A finite graph embedded in $S^3$ is called a \textit{spatial graph}.
For a Y-oriented spatial trivalent graph $K$ and an $S^1$-oriented handlebody-link $H$, we say that $K$ \textit{represents} $H$ if $H$ is a regular neighborhood of $K$ and the $S^1$-orientation of $H$ agrees with the Y-orientation.
Then any $S^1$-oriented handlebody-link can be represented by some Y-oriented spatial trivalent graph.
The following theorem plays a fundamental role in constructing $S^1$-oriented handlebody-link invariants.

\begin{theorem}[\cite{Ishii15Markov}] \label{thm:ReidemeisterMoves}
For a diagram $D_i$ of a Y-oriented spatial trivalent graph $K_i$ $(i=1,2)$, $K_1$ and $K_2$ represent an equivalent $S^1$-oriented handlebody-link if and only if $D_1$ and $D_2$ are related by a finite sequence of R1--R6 moves depicted in Figure~\ref{fig:ReidemeisterMoves} preserving Y-orientations.
\end{theorem}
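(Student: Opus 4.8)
The plan is to prove the two implications separately, with the backward (``if'') direction being a routine verification and the forward (``only if'') direction carrying all of the topological content. For the ``if'' direction I would check that each of the moves R1--R6 preserves the equivalence class of the represented $S^1$-oriented handlebody-link. The moves R1--R3 are the classical Reidemeister moves and R4, R5 are the local moves at a trivalent vertex; each is realized by an ambient isotopy of the spatial trivalent graph in $S^3$, which induces an ambient isotopy of regular neighborhoods and therefore fixes the handlebody-link. The move R6 (the IH-move) exchanges two trivalent spines of a local genus-$2$ piece whose regular neighborhoods agree up to isotopy, so the handlebody-link is again unchanged. In each case one reads off that the Y-orientations match across the move, whence the induced $S^1$-orientation on the solid-torus components is preserved.

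For the ``only if'' direction, suppose $K_1$ and $K_2$ represent equivalent $S^1$-oriented handlebody-links $H_1$ and $H_2$ via an orientation-preserving self-homeomorphism $h$ of $S^3$ with $h(H_1)=H_2$ preserving the $S^1$-orientation. Since every orientation-preserving self-homeomorphism of $S^3$ is isotopic to the identity, $h$ can be absorbed into an ambient isotopy; after replacing $K_2$ by $h^{-1}(K_2)$ we may therefore assume $H_1=H_2=:H$, so that $K_1$ and $K_2$ are two Y-oriented trivalent spines of one $S^1$-oriented handlebody-link $H$. The core topological step is then the fact that any two trivalent spines of $H$ are related by a finite sequence of ambient isotopies and IH-moves; I would prove this by a collapsing/Morse-theoretic argument, connecting an arbitrary trivalent spine to a fixed standard spine and observing that the only combinatorial changes needed are IH-moves. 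Granting this, I would realize each ambient isotopy of spatial trivalent graphs by a finite sequence of R1--R5 moves (the generalized Reidemeister theorem for diagrams of spatial trivalent graphs) and each IH-move by an R6 move.

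It then remains to reconcile the Y-orientations. On a genus-$1$ component the spine is a single circle (a trivalent graph with first Betti number $1$ has no vertices), whose Y-orientation is forced by the $S^1$-orientation, so nothing must be adjusted. On a component of genus at least $2$ the $S^1$-orientation imposes no constraint, so $K_1$ and $K_2$ may carry genuinely different Y-orientations; here I would argue that any two admissible Y-orientations (those having neither a source nor a sink at any vertex) of a given spine are joined through R6 moves, using the IH-move to locally reverse and reroute edge directions while maintaining the no-source/no-sink condition. I expect the two difficult points to be precisely this orientation bookkeeping and the spine lemma of the previous paragraph: the spine lemma because connecting arbitrary trivalent spines of a handlebody requires a careful normal-form argument, and the bookkeeping because one must certify that every intermediate diagram admits a consistent Y-orientation, so that all the moves used genuinely preserve Y-orientations as the statement demands.
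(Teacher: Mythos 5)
This theorem is not proved in the present paper: it is quoted, with proof deferred to the cited source \cite{Ishii15Markov}, so there is no internal argument to compare yours against; the relevant benchmark is the proof in that paper together with Ishii's earlier unoriented result that spatial trivalent graphs have ambient-isotopic regular neighborhoods if and only if they are related by isotopy and IH-moves. Your architecture matches that benchmark: the easy direction by checking that R1--R5 are realized by ambient isotopy while R6 changes the spine but not its regular neighborhood; the hard direction via Cerf's theorem (orientation-preserving self-homeomorphisms of $S^3$ are isotopic to the identity) to reduce to two Y-oriented spines of a single handlebody, then the spine lemma, the generalized Reidemeister theorem for spatial trivalent graph diagrams (R1--R5), and a Y-orientation bookkeeping step (R6). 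Your observation that a connected trivalent graph with first Betti number $1$ has no vertices (since $b=V/2+1$ for a connected trivalent graph), so that on genus-$1$ components the Y-orientation is forced by the $S^1$-orientation, is correct and genuinely needed.

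As a proof, however, the proposal is incomplete at exactly the two points you flag, and neither is routine. First, the spine lemma --- any two trivalent spines of an embedded handlebody are connected by ambient isotopy and IH-moves performed inside the handlebody --- is the entire content of the unoriented version of the theorem; ``a collapsing/Morse-theoretic argument connecting to a standard spine'' names a direction but not a proof. The standard route passes through complete meridian-disk systems and the uniqueness of such systems up to disk slides, dualized to edge slides of spines, each edge slide then realized by IH-moves; alternatively one contracts to spines with higher-valence vertices and checks that all trivalent resolutions of a vertex are IH-related. Second, the orientation lemma --- any two admissible Y-orientations of a given diagram are joined by Y-orientation-preserving R6 moves --- is asserted with only the heuristic ``locally reverse and reroute,'' and this is precisely where \cite{Ishii15Markov} does its real work; note the R6 move is a specific tangle replacement, so you must verify that the claimed orientation manipulations are actually realizable by it. Two sub-points you would also need to certify: every trivalent graph admits a Y-orientation at all (true, since such a graph is never a forest, but it requires an argument), and each unoriented IH-move in the spine-lemma sequence must be performed compatibly with some choice of Y-orientations on both sides, which in general forces you to interpolate additional R6 moves before each step rather than merely orient a fixed unoriented sequence after the fact.
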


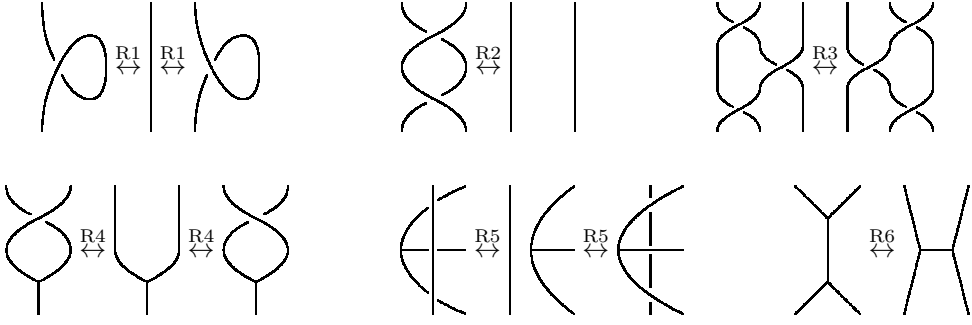
\begin{figure}
\mbox{}\hspace{1em}
\begin{minipage}{24pt}
\begin{picture}(24,48)
 \qbezier(0,0)(0,12)(6,24) 
 \qbezier(6,24)(12,36)(18,36) 
 \qbezier(18,12)(12,12)(7.5,21) 
 \qbezier(4.5,27)(0,36)(0,48) 
 \qbezier(18,12)(24,12)(24,24)
 \qbezier(18,36)(24,36)(24,24)
\end{picture}
\end{minipage}
~$\overset{\text{R1}}{\leftrightarrow}$~
\begin{minipage}{0pt}
\begin{picture}(0,48)
 \qbezier(0,0)(0,24)(0,48)
\end{picture}
\end{minipage}
~$\overset{\text{R1}}{\leftrightarrow}$~
\begin{minipage}{24pt}
\begin{picture}(24,48)
 \qbezier(0,48)(0,36)(6,24) 
 \qbezier(6,24)(12,12)(18,12) 
 \qbezier(18,36)(12,36)(7.5,27) 
 \qbezier(4.5,21)(0,12)(0,0) 
 \qbezier(18,12)(24,12)(24,24)
 \qbezier(18,36)(24,36)(24,24)
\end{picture}
\end{minipage}
\hfill
\begin{minipage}{24pt}
\begin{picture}(24,48)
 \qbezier(0,24)(0,30)(12,36) 
 \qbezier(12,36)(24,42)(24,48) 
 \qbezier(24,24)(24,30)(15,34.5) 
 \qbezier(9,37.5)(0,42)(0,48) 
 \qbezier(24,0)(24,6)(12,12) 
 \qbezier(12,12)(0,18)(0,24) 
 \qbezier(0,0)(0,6)(9,10.5) 
 \qbezier(15,13.5)(24,18)(24,24) 
\end{picture}
\end{minipage}
~$\overset{\text{R2}}{\leftrightarrow}$~
\begin{minipage}{24pt}
\begin{picture}(24,48)
 \qbezier(0,0)(0,24)(0,48)
 \qbezier(24,0)(24,24)(24,48)
\end{picture}
\end{minipage}
\hfill
\begin{minipage}{32pt}
\begin{picture}(32,48)
 \qbezier(0,32)(0,36)(8,40) 
 \qbezier(8,40)(16,44)(16,48) 
 \qbezier(16,32)(16,36)(10,39) 
 \qbezier(6,41)(0,44)(0,48) 
 \qbezier(32,32)(32,40)(32,48)
 \qbezier(16,16)(16,20)(24,24) 
 \qbezier(24,24)(32,28)(32,32) 
 \qbezier(32,16)(32,20)(26,23) 
 \qbezier(22,25)(16,28)(16,32) 
 \qbezier(0,16)(0,24)(0,32)
 \qbezier(0,0)(0,4)(8,8) 
 \qbezier(8,8)(16,12)(16,16) 
 \qbezier(16,0)(16,4)(10,7) 
 \qbezier(6,9)(0,12)(0,16) 
 \qbezier(32,0)(32,8)(32,16)
\end{picture}
\end{minipage}
~$\overset{\text{R3}}{\leftrightarrow}$~
\begin{minipage}{32pt}
\begin{picture}(32,48)
 \qbezier(16,32)(16,36)(24,40) 
 \qbezier(24,40)(32,44)(32,48) 
 \qbezier(32,32)(32,36)(26,39) 
 \qbezier(22,41)(16,44)(16,48) 
 \qbezier(0,32)(0,40)(0,48)
 \qbezier(0,16)(0,20)(8,24) 
 \qbezier(8,24)(16,28)(16,32) 
 \qbezier(16,16)(16,20)(10,23) 
 \qbezier(6,25)(0,28)(0,32) 
 \qbezier(32,16)(32,24)(32,32)
 \qbezier(16,0)(16,4)(24,8) 
 \qbezier(24,8)(32,12)(32,16) 
 \qbezier(32,0)(32,4)(26,7) 
 \qbezier(22,9)(16,12)(16,16) 
 \qbezier(0,0)(0,8)(0,16)
\end{picture}
\end{minipage}
\hspace{1em}\mbox{}\vspace{2em}\\
\begin{minipage}{24pt}
\begin{picture}(24,48)
 \qbezier(0,24)(0,30)(12,36) 
 \qbezier(12,36)(24,42)(24,48) 
 \qbezier(24,24)(24,30)(15,34.5) 
 \qbezier(9,37.5)(0,42)(0,48) 
 \qbezier(12,12)(12,6)(12,0) 
 \qbezier(12,12)(0,18)(0,24) 
 \qbezier(12,12)(24,18)(24,24) 
\end{picture}
\end{minipage}
~$\overset{\text{R4}}{\leftrightarrow}$~
\begin{minipage}{24pt}
\begin{picture}(24,48)
 \qbezier(0,48)(0,36)(0,24)
 \qbezier(24,48)(24,36)(24,24)
 \qbezier(12,12)(12,6)(12,0) 
 \qbezier(12,12)(0,18)(0,24) 
 \qbezier(12,12)(24,18)(24,24) 
\end{picture}
\end{minipage}
~$\overset{\text{R4}}{\leftrightarrow}$~
\begin{minipage}{24pt}
\begin{picture}(24,48)
 \qbezier(24,24)(24,30)(12,36) 
 \qbezier(12,36)(0,42)(0,48) 
 \qbezier(0,24)(0,30)(9,34.5) 
 \qbezier(15,37.5)(24,42)(24,48) 
 \qbezier(12,12)(12,6)(12,0) 
 \qbezier(12,12)(0,18)(0,24) 
 \qbezier(12,12)(24,18)(24,24) 
\end{picture}
\end{minipage}
\hfill
\begin{minipage}{24pt}
\begin{picture}(24,48)
 \qbezier(0,24)(0,32)(10,40)\qbezier(14,43)(19,46)(24,48)
 \qbezier(0,24)(5,24)(10,24)\qbezier(14,24)(19,24)(24,24)
 \qbezier(0,24)(0,16)(10,8)\qbezier(14,5)(19,2)(24,0)
 \qbezier(12,0)(12,24)(12,48)
\end{picture}
\end{minipage}
~$\overset{\text{R5}}{\leftrightarrow}$~
\begin{minipage}{24pt}
\begin{picture}(24,48)
 \qbezier(8,24)(8,36)(24,48)
 \qbezier(8,24)(16,24)(24,24)
 \qbezier(8,24)(8,12)(24,0)
 \qbezier(0,0)(0,24)(0,48)
\end{picture}
\end{minipage}
~$\overset{\text{R5}}{\leftrightarrow}$~
\begin{minipage}{24pt}
\begin{picture}(24,48)
 \qbezier(0,24)(0,36)(24,48)
 \qbezier(0,24)(12,24)(24,24)
 \qbezier(0,24)(0,12)(24,0)
 \qbezier(12,0)(12,2)(12,4)
 \qbezier(12,10)(12,16)(12,22)
 \qbezier(12,26)(12,32)(12,38)
 \qbezier(12,44)(12,46)(12,48)
\end{picture}
\end{minipage}
\hfill
\begin{minipage}{24pt}
\begin{picture}(24,48)
 \qbezier(0,48)(6,42)(12,36)
 \qbezier(24,48)(18,42)(12,36)
 \qbezier(12,12)(12,24)(12,36)
 \qbezier(0,0)(6,6)(12,12)
 \qbezier(24,0)(18,6)(12,12)
\end{picture}
\end{minipage}
~$\overset{\text{R6}}{\leftrightarrow}$~
\begin{minipage}{24pt}
\begin{picture}(24,48)
 \qbezier(0,48)(3,36)(6,24)
 \qbezier(24,48)(21,36)(18,24)
 \qbezier(6,24)(12,24)(18,24)
 \qbezier(0,0)(3,12)(6,24)
 \qbezier(24,0)(21,12)(18,24)
\end{picture}
\end{minipage}
\caption{The Reidemeister moves for handlebody-links}
\label{fig:ReidemeisterMoves}
\end{figure}

For a diagram $D$ of a Y-oriented spatial trivalent graph, we denote by $\mathcal{SA}(D)$ the set of semi-arcs of $D$, where a semi-arc is a piece of a curve each of whose endpoints is a crossing or a vertex.

\begin{definition} \label{def:coloring}
Let $X=\bigsqcup_{\lambda\in\Lambda}G_\lambda$ be a multiple conjugation biquandle.
We define $a\triangle b:=b^{-1}a\oline{*}b$ for $a,b\in G_\lambda$.
Let $D$ be a diagram of an $S^1$-oriented handlebody-link $H$.
An \textit{$X$-coloring} of $D$ is a map $C:\mathcal{SA}(D)\to X$ satisfying
\vspace{1em}
\begin{center}
\begin{minipage}{65pt}
\begin{picture}(65,40)(-5,0)
 \put(40,40){\vector(-1,-1){40}}
 \put(0,40){\line(1,-1){18}}
 \put(22,18){\vector(1,-1){18}}
 \put(5,35){\makebox(0,0){\normalsize$\nearrow$}}
 \put(5,5){\makebox(0,0){\normalsize$\searrow$}}
 \put(35,35){\makebox(0,0){\normalsize$\searrow$}}
 \put(35,5){\makebox(0,0){\normalsize$\nearrow$}}
 \put(-3,40){\makebox(0,0)[r]{\normalsize$a$}}
 \put(-3,0){\makebox(0,0)[r]{\normalsize$b$}}
 \put(43,40){\makebox(0,0)[l]{\normalsize$b\oline{*}a$}}
 \put(43,0){\makebox(0,0)[l]{\normalsize$a\uline{*}b$}}
\end{picture}
\end{minipage}
\hspace{5em}
\begin{minipage}{65pt}
\begin{picture}(65,40)(-5,0)
 \put(0,40){\vector(1,-1){40}}
 \put(40,40){\line(-1,-1){18}}
 \put(18,18){\vector(-1,-1){18}}
 \put(5,35){\makebox(0,0){\normalsize$\nearrow$}}
 \put(5,5){\makebox(0,0){\normalsize$\searrow$}}
 \put(35,35){\makebox(0,0){\normalsize$\searrow$}}
 \put(35,5){\makebox(0,0){\normalsize$\nearrow$}}
 \put(-3,40){\makebox(0,0)[r]{\normalsize$a$}}
 \put(-3,0){\makebox(0,0)[r]{\normalsize$b$}}
 \put(43,40){\makebox(0,0)[l]{\normalsize$b\uline{*}a$}}
 \put(43,0){\makebox(0,0)[l]{\normalsize$a\oline{*}b$}}
\end{picture}
\end{minipage}
\end{center}
\vspace{1em}
at each crossing, and
\vspace{1em}
\begin{center}
\begin{minipage}{50pt}
\begin{picture}(50,40)(-5,0)
 \put(20,20){\vector(0,-1){20}}
 \put(0,40){\vector(1,-1){20}}
 \put(40,40){\vector(-1,-1){20}}
 \put(21,10){\makebox(0,0){\normalsize$\rightarrow$}}
 \put(5,35){\makebox(0,0){\normalsize$\nearrow$}}
 \put(35,35){\makebox(0,0){\normalsize$\searrow$}}
 \put(-3,40){\makebox(0,0)[r]{\normalsize$b$}}
 \put(43,40){\makebox(0,0)[l]{\normalsize$a\triangle b$}}
 \put(23,0){\makebox(0,0)[l]{\normalsize$a$}}
\end{picture}
\end{minipage}
\hspace{5em}
\begin{minipage}{50pt}
\begin{picture}(50,40)(-5,0)
 \put(20,40){\vector(0,-1){20}}
 \put(20,20){\vector(-1,-1){20}}
 \put(20,20){\vector(1,-1){20}}
 \put(21,30){\makebox(0,0){\normalsize$\rightarrow$}}
 \put(5,5){\makebox(0,0){\normalsize$\searrow$}}
 \put(35,5){\makebox(0,0){\normalsize$\nearrow$}}
 \put(23,40){\makebox(0,0)[l]{\normalsize$a$}}
 \put(-3,0){\makebox(0,0)[r]{\normalsize$b$}}
 \put(43,0){\makebox(0,0)[l]{\normalsize$a\triangle b$}}
\end{picture}
\end{minipage}
\end{center}
\vspace{1em}
at each vertex, where the normal orientation is obtained by rotating the usual orientation counterclockwise by $\pi/2$ on the diagram.
We denote by $\operatorname{Col}_X(D)$ the set of $X$-colorings of $D$.
\end{definition}

\begin{theorem} \label{thm:coloring}
Let $X=\bigsqcup_{\lambda\in\Lambda}G_\lambda$ be a multiple conjugation biquandle.
Let $D$ be a diagram of an $S^1$-oriented handlebody-link $H$.
Let $D'$ be a diagram obtained by applying one of the Y-oriented R1--R6 moves to the diagram $D$ once.
For an $X$-coloring $C$ of $D$, there is a unique $X$-coloring $C'$ of $D'$ which coincides with $C$ except the place where the move is applied.
\end{theorem}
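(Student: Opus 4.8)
The plan is to reduce the statement to a local verification at each of the six moves R1--R6 of Theorem~\ref{thm:ReidemeisterMoves}. Since $D'$ is obtained from $D$ by a single move supported in a disk $B$, the two diagrams coincide outside $B$, so any $C'$ agreeing with $C$ off $B$ is forced to assign to every semi-arc meeting $\partial B$ the color prescribed by $C$. Thus it suffices to prove that, for the tangle inside $B$, the colors on the semi-arcs crossing $\partial B$ (the boundary colors) extend in exactly one way to an $X$-coloring of the $D'$-side of the move: existence yields a valid $C'$, and uniqueness is the asserted uniqueness. Concretely, for each move I would fix the incoming boundary colors, propagate them through the crossings and vertices of $D'\cap B$ using the rules of Definition~\ref{def:coloring}, check that the propagation is uniquely determined, and verify that the resulting outgoing boundary colors coincide with those dictated by $C$ (equivalently, with what the $D$-side produces). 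Because each move occurs in several variants (strand orientations, over/under choices), I would also observe that the remaining variants follow from the treated ones by inverting the relevant operations, using the bijectivity in Lemma~\ref{lem:x*e=x}.

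For the moves R1, R2, R3, which involve only crossings, the biquandle axioms suffice. For R2 and for the solvability of interior colors in general, axiom (B2) is decisive: bijectivity of $\uline{*}a$, of $\oline{*}a$, and of $S$ lets one both solve uniquely for the interior semi-arc colors and confirm that the two cancelling crossings return the boundary colors unchanged. For R3, the three identities of axiom (B3) are exactly the statement that the two sides of the triangle move send equal inputs to equal outputs. For R1, the curl, I would use (B1), $x\uline{*}x=x\oline{*}x$, together with Lemma~\ref{lem:x*x=y} to see that the color emerging from the kink equals the one entering it, so the move neither obstructs nor alters the coloring.

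The moves R4, R5, R6, which involve trivalent vertices, are where the axioms of a multiple conjugation biquandle beyond those of a biquandle are needed, recalling that a vertex imposes $a\triangle b=b^{-1}a\oline{*}b$ with $a,b$ in a common group $G_\lambda$. Here I would first use that $\uline{*}x$ and $\oline{*}x$ restrict to group homomorphisms $G_a\to G_{a\uline{*}x}$ and $G_a\to G_{a\oline{*}x}$, which guarantees that a color produced at a vertex lands in the correct group and that the group operation is respected when a strand is pushed across a vertex. The product rules \eqref{eq:x*u(ab)} and \eqref{eq:x*o(ab)}, describing how $\uline{*}$ and $\oline{*}$ interact with a multiplication $ab$ inside $G_\lambda$, are precisely what make the colorings on the two sides of R4 and R5 agree. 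For R6, which interchanges the two local pictures of a vertex, the decisive ingredient is the exchange relation \eqref{eq:R14}, $a^{-1}b\oline{*}a=ba^{-1}\uline{*}a$; combined with the definition of $\triangle$ and the homomorphism property, it yields the equality of the two boundary colorings.

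The main obstacle I expect is exactly the bookkeeping for the vertex moves R4--R6: one must track carefully that every element created by $\triangle$ remains in the intended group $G_\lambda$ (so that the subsequent group multiplications are even defined) and that a passing strand transforms a product $ab$ compatibly with how it transforms each factor. Among these, R6 is the least transparent, since its invariance rests on the non-biquandle axiom \eqref{eq:R14} rather than on any Reidemeister-type identity; isolating the precise calculation that turns \eqref{eq:R14} into the required equality of the two vertex pictures is the crux of the argument.
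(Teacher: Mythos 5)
Your overall architecture is the same as the paper's: a local, move-by-move verification in which the biquandle axioms (B1)--(B3) dispose of R1--R3 and the extra MCB axioms handle the vertex moves; the paper packages the latter as the primitive conditions \eqref{eq:R4-1,primitive}--\eqref{eq:R6-4,primitive} and verifies them via Lemma~\ref{lem:axioms of triangle MCB}. But your plan has a genuine gap at the \emph{reversed} R4 move. The R4 conditions are biconditionals. In one direction the colors on the new side are already present and one only checks identities; this is where \eqref{eq:R14} actually enters, via $(a\uline{*}b)\triangle(a\triangle b)=b\oline{*}a$. In the other direction, bijectivity of $S$ (axiom (B2)) supplies the unique candidate interior pair $(a,b)$, but one must then \emph{prove} that this pair satisfies the vertex condition --- that $b\in G_a$ and $x=a\triangle b$ --- given only $x\in G_{a\uline{*}b}$ and $(a\uline{*}b)\triangle x=b\oline{*}a$. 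Your scheme of ``propagate and verify the outgoing boundary colors'' does not reach this: there is nothing left to propagate, the group membership $b\in G_a$ is the whole issue, and neither the product rules \eqref{eq:x*u(ab)}, \eqref{eq:x*o(ab)} nor \eqref{eq:R14} yields it directly. The paper's proof of \eqref{eq:R4-1,primitive,half} needs a separate idea: set $c:=x\uline{*}^{-1}b$, use the R5-type identities \eqref{eq:R5-1,u,triangle} and \eqref{eq:R5-2,o,triangle} to derive $(a\triangle c)\uline{*}(b\oline{*}c)=(b\oline{*}c)\oline{*}(a\triangle c)$, invoke Lemma~\ref{lem:x*x=y}~(1) (if $u\uline{*}v=v\oline{*}u$ then $u=v$) to conclude $a\triangle c=b\oline{*}c$, whence $b\in G_a$, and then deduce $x=a\triangle b$ from the injectivity of $\triangle(c\triangle b)$ together with \eqref{eq:R4,u,triangle} and \eqref{eq:R6,triangle}. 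Your proposal contains no mechanism that would produce this step.

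Two further misalignments are worth correcting. First, you locate \eqref{eq:R14} at R6 and call it the crux there; in fact the R6 identity $(a\triangle c)\triangle(b\triangle c)=a\triangle b$ follows from the homomorphism property of $\oline{*}c$ and \eqref{eq:x*o(ab)} alone, while \eqref{eq:R14} is what makes the forward R4 computation work (it rewrites $a\uline{*}b$ as $b^{-1}ab\oline{*}b$ so that everything can be pushed through $\oline{*}$). Second, reversing R6 imposes the existence-and-uniqueness conditions \eqref{eq:R6-2,primitive} and \eqref{eq:R6-4,primitive}, which require the bijectivity of $\triangle a:G_a\to G_{a\triangle a}$ in the \emph{first} argument; in the paper this is \eqref{eq:bijection,triangle}, obtained by writing $\triangle a$ as $\oline{*}a$ composed with left multiplication by $a^{-1}$. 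Your appeal to the bijectivity statements of Lemma~\ref{lem:x*e=x} covers $\uline{*}x$, $\oline{*}x$ and the map $x\mapsto b\triangle x$, but not this one, so your plan as written would also stall when solving for the interior color $b=x\triangle^{-1}c$ in the reversed R6 configurations.
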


We prove this theorem in the next section.
Here we introduce the primitive conditions for the proof and the universality discussed in Section~\ref{sect:universality}.

Let $X$ be a biquandle, $\triangle:P\to X$ a map, where $P$ is a subset of $X\times X$.
We write $a\sim b$ if $(a,b)\in P$.
We define an $(X,P,\triangle)$-coloring to be a map $C:\mathcal{SA}(D)\to X$ satisfying the conditions as crossings and vertices as in Definition~\ref{def:coloring}.
The following conditions \eqref{eq:R4-1,primitive}--\eqref{eq:R6-4,primitive}, which we call the \textit{primitive conditions}, are the conditions on $(X,P,\triangle)$ from that we obtain a one-to-one correspondence of $(X,P,\triangle)$-colorings on the Reidemeister moves R4--R6 (see Figure~\ref{fig:coloredReidemeisterMove}, where all arcs are directed from top to bottom, except for the Reidemeister moves R4).
\bigskip

\noindent
(R4) For any $a,b,x\in X$,
\begin{align}
a\sim b,x=a\triangle b
&\Leftrightarrow a\uline{*}b\sim x,(a\uline{*}b)\triangle x=b\oline{*}a, \label{eq:R4-1,primitive} \\
a\sim b,x=a\triangle b
&\Leftrightarrow a\oline{*}b\sim x,(a\oline{*}b)\triangle x=b\uline{*}a. \label{eq:R4-2,primitive}
\end{align}

\noindent
(R5) For any $a,b,x\in X$,
\begin{align}
a\sim b
&\Leftrightarrow a\uline{*}x\sim b\uline{*}x \nonumber \\
&\Rightarrow
(x\oline{*}b)\oline{*}(a\triangle b)=x\oline{*}a,
(a\triangle b)\uline{*}(x\oline{*}b)=(a\uline{*}x)\triangle(b\uline{*}x), \label{eq:R5-1,primitive} \\
a\sim b
&\Leftrightarrow a\oline{*}x\sim b\oline{*}x \nonumber \\
&\Rightarrow
(x\uline{*}b)\uline{*}(a\triangle b)=x\uline{*}a,
(a\triangle b)\oline{*}(x\uline{*}b)=(a\oline{*}x)\triangle(b\oline{*}x). \label{eq:R5-2,primitive}
\end{align}

\noindent
(R6) For any $a,b,c,x\in X$,
\begin{align}
&a\sim b,b\sim c,x=b\triangle c
\Rightarrow a\sim c,a\triangle c\sim x,
(a\triangle c)\triangle x=a\triangle b, \label{eq:R6-1,primitive} \\
&\exists!b\in X\text{ s.t.~}a\sim b,b\sim c,
x=b\triangle c,(a\triangle c)\triangle x=a\triangle b
\Leftarrow a\sim c,a\triangle c\sim x, \label{eq:R6-2,primitive} \\
&a\sim b,a\sim c,x=a\triangle c
\Rightarrow b\sim c,x\sim b\triangle c,
x\triangle(b\triangle c)=a\triangle b, \label{eq:R6-3,primitive} \\
&\exists!a\in X\text{ s.t.~}a\sim b,a\sim c,
x=a\triangle c,x\triangle(b\triangle c)=a\triangle b
\Leftarrow b\sim c,x\sim b\triangle c. \label{eq:R6-4,primitive}
\end{align}

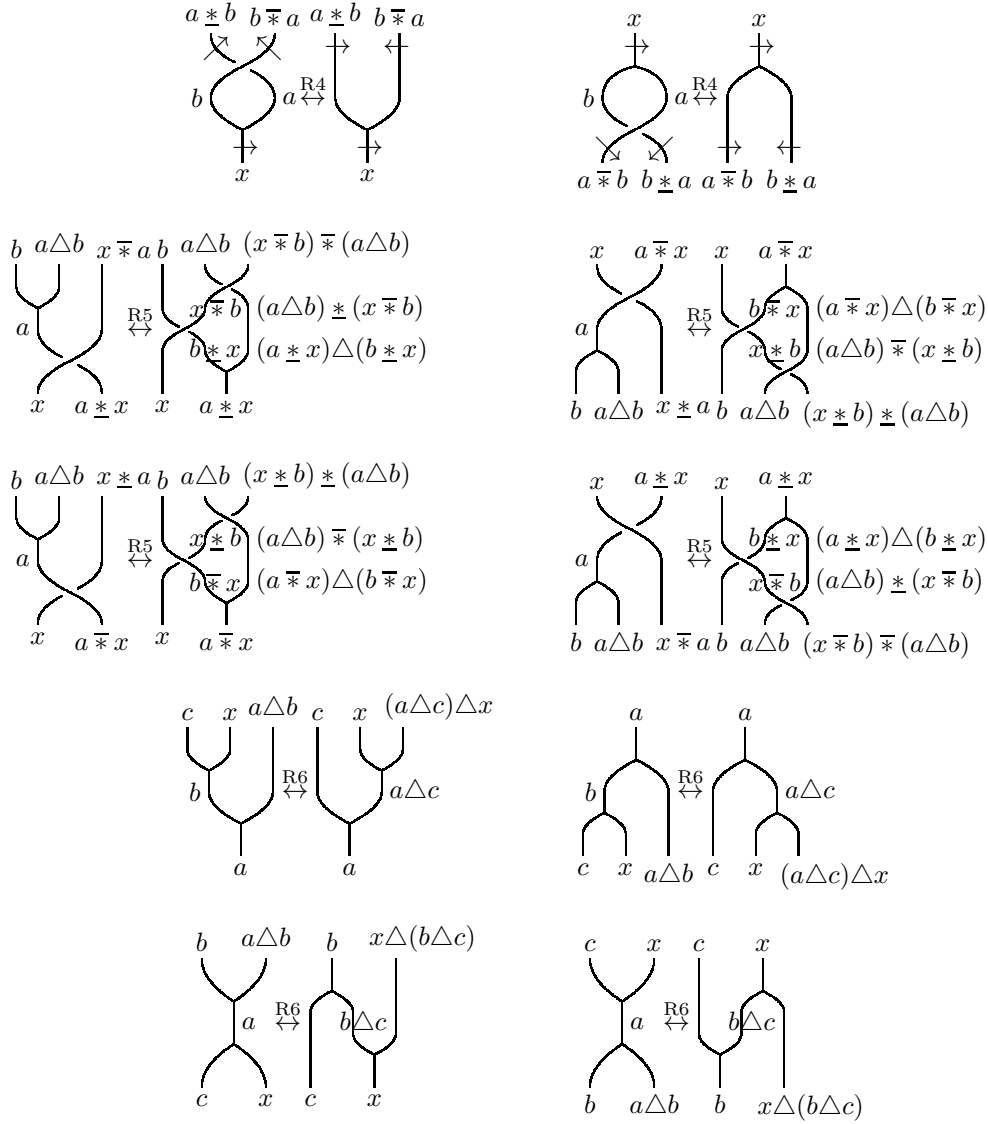
\begin{figure}
\mbox{}\hfill
\begin{minipage}{36pt}
\begin{picture}(24,72)(-6,-12)
 \qbezier(0,24)(0,30)(12,36) 
 \qbezier(12,36)(24,42)(24,48) 
 \qbezier(24,24)(24,30)(15,34.5) 
 \qbezier(9,37.5)(0,42)(0,48) 
 \qbezier(12,12)(12,6)(12,0) 
 \qbezier(12,12)(0,18)(0,24) 
 \qbezier(12,12)(24,18)(24,24) 
 \put(2,43){\makebox(0,0){\normalsize$\nearrow$}}
 \put(22,43){\makebox(0,0){\normalsize$\nwarrow$}}
 \put(13,5){\makebox(0,0){\normalsize$\rightarrow$}}
 \put(0,51){\makebox(0,0)[b]{\normalsize$a\uline{*}b$}}
 \put(24,51){\makebox(0,0)[b]{\normalsize$b\oline{*}a$}}
 \put(12,-3){\makebox(0,0)[t]{\normalsize$x$}}
 \put(-3,24){\makebox(0,0)[r]{\normalsize$b$}}
 \put(27,24){\makebox(0,0)[l]{\normalsize$a$}}
\end{picture}
\end{minipage}
~$\overset{\text{R4}}{\leftrightarrow}$~
\begin{minipage}{24pt}
\begin{picture}(24,72)(0,-12)
 \qbezier(0,48)(0,36)(0,24)
 \qbezier(24,48)(24,36)(24,24)
 \qbezier(12,12)(12,6)(12,0) 
 \qbezier(12,12)(0,18)(0,24) 
 \qbezier(12,12)(24,18)(24,24) 
 \put(1,43){\makebox(0,0){\normalsize$\rightarrow$}}
 \put(23,43){\makebox(0,0){\normalsize$\leftarrow$}}
 \put(13,5){\makebox(0,0){\normalsize$\rightarrow$}}
 \put(0,51){\makebox(0,0)[b]{\normalsize$a\uline{*}b$}}
 \put(24,51){\makebox(0,0)[b]{\normalsize$b\oline{*}a$}}
 \put(12,-3){\makebox(0,0)[t]{\normalsize$x$}}
\end{picture}
\end{minipage}
\hfill
\begin{minipage}{36pt}
\begin{picture}(24,72)(-6,-12)
 \qbezier(12,36)(12,42)(12,48) 
 \qbezier(12,36)(0,33)(0,24) 
 \qbezier(12,36)(24,33)(24,24) 
 \qbezier(0,0)(0,6)(12,12) 
 \qbezier(12,12)(24,18)(24,24) 
 \qbezier(24,0)(24,6)(15,10.5) 
 \qbezier(9,13.5)(0,18)(0,24) 
 \put(13,43){\makebox(0,0){\normalsize$\rightarrow$}}
 \put(2,5){\makebox(0,0){\normalsize$\searrow$}}
 \put(22,5){\makebox(0,0){\normalsize$\swarrow$}}
 \put(12,51){\makebox(0,0)[b]{\normalsize$x$}}
 \put(0,-3){\makebox(0,0)[t]{\normalsize$a\oline{*}b$}}
 \put(24,-3){\makebox(0,0)[t]{\normalsize$b\uline{*}a$}}
 \put(-3,24){\makebox(0,0)[r]{\normalsize$b$}}
 \put(27,24){\makebox(0,0)[l]{\normalsize$a$}}
\end{picture}
\end{minipage}
~$\overset{\text{R4}}{\leftrightarrow}$~
\begin{minipage}{24pt}
\begin{picture}(24,72)(0,-12)
 \qbezier(12,36)(12,42)(12,48) 
 \qbezier(12,36)(0,30)(0,24) 
 \qbezier(12,36)(24,30)(24,24) 
 \qbezier(0,24)(0,12)(0,0)
 \qbezier(24,24)(24,12)(24,0)
 \put(13,43){\makebox(0,0){\normalsize$\rightarrow$}}
 \put(1,5){\makebox(0,0){\normalsize$\rightarrow$}}
 \put(23,5){\makebox(0,0){\normalsize$\leftarrow$}}
 \put(12,51){\makebox(0,0)[b]{\normalsize$x$}}
 \put(0,-3){\makebox(0,0)[t]{\normalsize$a\oline{*}b$}}
 \put(24,-3){\makebox(0,0)[t]{\normalsize$b\uline{*}a$}}
\end{picture}
\end{minipage}
\hfill\mbox{}\vspace{5mm}

\begin{minipage}{38pt}
\begin{picture}(32,72)(0,-12)
 \qbezier(0,40)(0,44)(0,48)
 \qbezier(16,40)(16,44)(16,48)
 \qbezier(8,32)(8,28)(8,24) 
 \qbezier(8,32)(0,36)(0,40) 
 \qbezier(8,32)(16,36)(16,40) 
 \qbezier(32,24)(32,36)(32,48)
 \qbezier(8,0)(8,6)(20,12) 
 \qbezier(20,12)(32,18)(32,24) 
 \qbezier(32,0)(32,6)(23,10.5) 
 \qbezier(17,13.5)(8,18)(8,24) 
 \put(0,51){\makebox(0,0)[b]{\normalsize$b$}}
 \put(16,51){\makebox(0,0)[b]{\normalsize$a\triangle b$}}
 \put(30,51){\makebox(0,0)[bl]{\normalsize$x\oline{*}a$}}
 \put(8,-3){\makebox(0,0)[t]{\normalsize$x$}}
 \put(32,-3){\makebox(0,0)[t]{\normalsize$a\uline{*}x$}}
 \put(5,24){\makebox(0,0)[r]{\normalsize$a$}}
\end{picture}
\end{minipage}
~$\overset{\text{R5}}{\leftrightarrow}$~
\begin{minipage}{96pt}
\begin{picture}(32,72)(0,-12)
 \qbezier(16,32)(16,36)(24,40) 
 \qbezier(24,40)(32,44)(32,48) 
 \qbezier(32,32)(32,36)(26,39) 
 \qbezier(22,41)(16,44)(16,48) 
 \qbezier(0,32)(0,40)(0,48)
 \qbezier(0,16)(0,20)(8,24) 
 \qbezier(8,24)(16,28)(16,32) 
 \qbezier(16,16)(16,20)(10,23) 
 \qbezier(6,25)(0,28)(0,32) 
 \qbezier(32,16)(32,24)(32,32)
 \qbezier(24,8)(24,4)(24,0) 
 \qbezier(24,8)(16,12)(16,16) 
 \qbezier(24,8)(32,12)(32,16) 
 \qbezier(0,0)(0,8)(0,16)
 \put(0,51){\makebox(0,0)[b]{\normalsize$b$}}
 \put(16,51){\makebox(0,0)[b]{\normalsize$a\triangle b$}}
 \put(30,51){\makebox(0,0)[bl]{\normalsize$(x\oline{*}b)\oline{*}(a\triangle b)$}}
 \put(0,-3){\makebox(0,0)[t]{\normalsize$x$}}
 \put(24,-3){\makebox(0,0)[t]{\normalsize$a\uline{*}x$}}
 \put(10,32){\makebox(0,0)[l]{\normalsize$x\oline{*}b$}}
 \put(10,16){\makebox(0,0)[l]{\normalsize$b\uline{*}x$}}
 \put(35,32){\makebox(0,0)[l]{\normalsize$(a\triangle b)\uline{*}(x\oline{*}b)$}}
 \put(35,16){\makebox(0,0)[l]{\normalsize$(a\uline{*}x)\triangle(b\uline{*}x)$}}
\end{picture}
\end{minipage}
\hfill
\begin{minipage}{38pt}
\begin{picture}(32,72)(0,-12)
 \qbezier(8,24)(8,30)(20,36) 
 \qbezier(20,36)(32,42)(32,48) 
 \qbezier(32,24)(32,30)(23,34.5) 
 \qbezier(17,37.5)(8,42)(8,48) 
 \qbezier(8,16)(8,20)(8,24) 
 \qbezier(8,16)(0,12)(0,8) 
 \qbezier(8,16)(16,12)(16,8) 
 \qbezier(0,0)(0,4)(0,8)
 \qbezier(16,0)(16,4)(16,8)
 \qbezier(32,0)(32,12)(32,24)
 \put(8,51){\makebox(0,0)[b]{\normalsize$x$}}
 \put(32,51){\makebox(0,0)[b]{\normalsize$a\oline{*}x$}}
 \put(0,-3){\makebox(0,0)[t]{\normalsize$b$}}
 \put(16,-3){\makebox(0,0)[t]{\normalsize$a\triangle b$}}
 \put(30,-3){\makebox(0,0)[tl]{\normalsize$x\uline{*}a$}}
 \put(5,24){\makebox(0,0)[r]{\normalsize$a$}}
\end{picture}
\end{minipage}
~$\overset{\text{R5}}{\leftrightarrow}$~
\begin{minipage}{96pt}
\begin{picture}(32,72)(0,-12)
 \qbezier(24,40)(24,44)(24,48) 
 \qbezier(24,40)(16,36)(16,32) 
 \qbezier(24,40)(32,36)(32,32) 
 \qbezier(0,32)(0,40)(0,48)
 \qbezier(0,16)(0,20)(8,24) 
 \qbezier(8,24)(16,28)(16,32) 
 \qbezier(16,16)(16,20)(10,23) 
 \qbezier(6,25)(0,28)(0,32) 
 \qbezier(32,16)(32,24)(32,32)
 \qbezier(16,0)(16,4)(24,8) 
 \qbezier(24,8)(32,12)(32,16) 
 \qbezier(32,0)(32,4)(26,7) 
 \qbezier(22,9)(16,12)(16,16) 
 \qbezier(0,0)(0,8)(0,16)
 \put(0,51){\makebox(0,0)[b]{\normalsize$x$}}
 \put(24,51){\makebox(0,0)[b]{\normalsize$a\oline{*}x$}}
 \put(0,-3){\makebox(0,0)[t]{\normalsize$b$}}
 \put(16,-3){\makebox(0,0)[t]{\normalsize$a\triangle b$}}
 \put(30,-3){\makebox(0,0)[tl]{\normalsize$(x\uline{*}b)\uline{*}(a\triangle b)$}}
 \put(10,32){\makebox(0,0)[l]{\normalsize$b\oline{*}x$}}
 \put(10,16){\makebox(0,0)[l]{\normalsize$x\uline{*}b$}}
 \put(35,32){\makebox(0,0)[l]{\normalsize$(a\oline{*}x)\triangle(b\oline{*}x)$}}
 \put(35,16){\makebox(0,0)[l]{\normalsize$(a\triangle b)\oline{*}(x\uline{*}b)$}}
\end{picture}
\end{minipage}\vspace{5mm}

\begin{minipage}{38pt}
\begin{picture}(32,72)(0,-12)
 \qbezier(0,40)(0,44)(0,48)
 \qbezier(16,40)(16,44)(16,48)
 \qbezier(8,32)(8,28)(8,24) 
 \qbezier(8,32)(0,36)(0,40) 
 \qbezier(8,32)(16,36)(16,40) 
 \qbezier(32,24)(32,36)(32,48)
 \qbezier(32,0)(32,6)(20,12) 
 \qbezier(20,12)(8,18)(8,24) 
 \qbezier(8,0)(8,6)(17,10.5) 
 \qbezier(23,13.5)(32,18)(32,24) 
 \put(0,51){\makebox(0,0)[b]{\normalsize$b$}}
 \put(16,51){\makebox(0,0)[b]{\normalsize$a\triangle b$}}
 \put(30,51){\makebox(0,0)[bl]{\normalsize$x\uline{*}a$}}
 \put(8,-3){\makebox(0,0)[t]{\normalsize$x$}}
 \put(32,-3){\makebox(0,0)[t]{\normalsize$a\oline{*}x$}}
 \put(5,24){\makebox(0,0)[r]{\normalsize$a$}}
\end{picture}
\end{minipage}
~$\overset{\text{R5}}{\leftrightarrow}$~
\begin{minipage}{96pt}
\begin{picture}(32,72)(0,-12)
 \qbezier(32,32)(32,36)(24,40) 
 \qbezier(24,40)(16,44)(16,48) 
 \qbezier(16,32)(16,36)(22,39) 
 \qbezier(26,41)(32,44)(32,48) 
 \qbezier(0,32)(0,40)(0,48)
 \qbezier(16,16)(16,20)(8,24) 
 \qbezier(8,24)(0,28)(0,32) 
 \qbezier(0,16)(0,20)(6,23) 
 \qbezier(10,25)(16,28)(16,32) 
 \qbezier(32,16)(32,24)(32,32)
 \qbezier(24,8)(24,4)(24,0) 
 \qbezier(24,8)(16,12)(16,16) 
 \qbezier(24,8)(32,12)(32,16) 
 \qbezier(0,0)(0,8)(0,16)
 \put(0,51){\makebox(0,0)[b]{\normalsize$b$}}
 \put(16,51){\makebox(0,0)[b]{\normalsize$a\triangle b$}}
 \put(30,51){\makebox(0,0)[bl]{\normalsize$(x\uline{*}b)\uline{*}(a\triangle b)$}}
 \put(0,-3){\makebox(0,0)[t]{\normalsize$x$}}
 \put(24,-3){\makebox(0,0)[t]{\normalsize$a\oline{*}x$}}
 \put(10,32){\makebox(0,0)[l]{\normalsize$x\uline{*}b$}}
 \put(10,16){\makebox(0,0)[l]{\normalsize$b\oline{*}x$}}
 \put(35,32){\makebox(0,0)[l]{\normalsize$(a\triangle b)\oline{*}(x\uline{*}b)$}}
 \put(35,16){\makebox(0,0)[l]{\normalsize$(a\oline{*}x)\triangle(b\oline{*}x)$}}
\end{picture}
\end{minipage}
\hfill
\begin{minipage}{38pt}
\begin{picture}(32,72)(0,-12)
 \qbezier(32,24)(32,30)(20,36) 
 \qbezier(20,36)(8,42)(8,48) 
 \qbezier(8,24)(8,30)(17,34.5) 
 \qbezier(23,37.5)(32,42)(32,48) 
 \qbezier(8,16)(8,20)(8,24) 
 \qbezier(8,16)(0,12)(0,8) 
 \qbezier(8,16)(16,12)(16,8) 
 \qbezier(0,0)(0,4)(0,8)
 \qbezier(16,0)(16,4)(16,8)
 \qbezier(32,0)(32,12)(32,24)
 \put(8,51){\makebox(0,0)[b]{\normalsize$x$}}
 \put(32,51){\makebox(0,0)[b]{\normalsize$a\uline{*}x$}}
 \put(0,-3){\makebox(0,0)[t]{\normalsize$b$}}
 \put(16,-3){\makebox(0,0)[t]{\normalsize$a\triangle b$}}
 \put(30,-3){\makebox(0,0)[tl]{\normalsize$x\oline{*}a$}}
 \put(5,24){\makebox(0,0)[r]{\normalsize$a$}}
\end{picture}
\end{minipage}
~$\overset{\text{R5}}{\leftrightarrow}$~
\begin{minipage}{96pt}
\begin{picture}(32,72)(0,-12)
 \qbezier(24,40)(24,44)(24,48) 
 \qbezier(24,40)(16,36)(16,32) 
 \qbezier(24,40)(32,36)(32,32) 
 \qbezier(0,32)(0,40)(0,48)
 \qbezier(16,16)(16,20)(8,24) 
 \qbezier(8,24)(0,28)(0,32) 
 \qbezier(0,16)(0,20)(6,23) 
 \qbezier(10,25)(16,28)(16,32) 
 \qbezier(32,16)(32,24)(32,32)
 \qbezier(32,0)(32,4)(24,8) 
 \qbezier(24,8)(16,12)(16,16) 
 \qbezier(16,0)(16,4)(22,7) 
 \qbezier(26,9)(32,12)(32,16) 
 \qbezier(0,0)(0,8)(0,16)
 \put(0,51){\makebox(0,0)[b]{\normalsize$x$}}
 \put(24,51){\makebox(0,0)[b]{\normalsize$a\uline{*}x$}}
 \put(0,-3){\makebox(0,0)[t]{\normalsize$b$}}
 \put(16,-3){\makebox(0,0)[t]{\normalsize$a\triangle b$}}
 \put(30,-3){\makebox(0,0)[tl]{\normalsize$(x\oline{*}b)\oline{*}(a\triangle b)$}}
 \put(10,32){\makebox(0,0)[l]{\normalsize$b\uline{*}x$}}
 \put(10,16){\makebox(0,0)[l]{\normalsize$x\oline{*}b$}}
 \put(35,32){\makebox(0,0)[l]{\normalsize$(a\uline{*}x)\triangle(b\uline{*}x)$}}
 \put(35,16){\makebox(0,0)[l]{\normalsize$(a\triangle b)\uline{*}(x\oline{*}b)$}}
\end{picture}
\end{minipage}\vspace{5mm}

\mbox{}\hfill
\begin{minipage}{32pt}
\begin{picture}(32,72)(0,-12)
 \qbezier(0,40)(0,44)(0,48)
 \qbezier(16,40)(16,44)(16,48)
 \qbezier(8,32)(8,28)(8,24) 
 \qbezier(8,32)(0,36)(0,40) 
 \qbezier(8,32)(16,36)(16,40) 
 \qbezier(32,24)(32,36)(32,48)
 \qbezier(20,12)(20,6)(20,0) 
 \qbezier(20,12)(8,18)(8,24) 
 \qbezier(20,12)(32,18)(32,24) 
 \put(0,51){\makebox(0,0)[b]{\normalsize$c$}}
 \put(16,51){\makebox(0,0)[b]{\normalsize$x$}}
 \put(32,51){\makebox(0,0)[b]{\normalsize$a\triangle b$}}
 \put(20,-3){\makebox(0,0)[t]{\normalsize$a$}}
 \put(5,24){\makebox(0,0)[r]{\normalsize$b$}}
\end{picture}
\end{minipage}
~$\overset{\text{R6}}{\leftrightarrow}$~
\begin{minipage}{32pt}
\begin{picture}(32,72)(0,-12)
 \qbezier(16,40)(16,44)(16,48)
 \qbezier(32,40)(32,44)(32,48)
 \qbezier(24,32)(24,28)(24,24) 
 \qbezier(24,32)(16,36)(16,40) 
 \qbezier(24,32)(32,36)(32,40) 
 \qbezier(0,24)(0,36)(0,48)
 \qbezier(12,12)(12,6)(12,0) 
 \qbezier(12,12)(0,18)(0,24) 
 \qbezier(12,12)(24,18)(24,24) 
 \put(0,51){\makebox(0,0)[b]{\normalsize$c$}}
 \put(16,51){\makebox(0,0)[b]{\normalsize$x$}}
 \put(25,51){\makebox(0,0)[bl]{\normalsize$(a\triangle c)\triangle x$}}
 \put(12,-3){\makebox(0,0)[t]{\normalsize$a$}}
 \put(27,24){\makebox(0,0)[l]{\normalsize$a\triangle c$}}
\end{picture}
\end{minipage}
\hfill
\begin{minipage}{32pt}
\begin{picture}(32,72)(0,-12)
 \qbezier(20,36)(20,42)(20,48) 
 \qbezier(20,36)(8,30)(8,24) 
 \qbezier(20,36)(32,30)(32,24) 
 \qbezier(32,0)(32,12)(32,24)
 \qbezier(8,16)(8,20)(8,24) 
 \qbezier(8,16)(0,12)(0,8) 
 \qbezier(8,16)(16,12)(16,8) 
 \qbezier(0,0)(0,4)(0,8)
 \qbezier(16,0)(16,4)(16,8)
 \put(20,51){\makebox(0,0)[b]{\normalsize$a$}}
 \put(0,-3){\makebox(0,0)[t]{\normalsize$c$}}
 \put(16,-3){\makebox(0,0)[t]{\normalsize$x$}}
 \put(32,-3){\makebox(0,0)[t]{\normalsize$a\triangle b$}}
 \put(5,24){\makebox(0,0)[r]{\normalsize$b$}}
\end{picture}
\end{minipage}
~$\overset{\text{R6}}{\leftrightarrow}$~
\begin{minipage}{32pt}
\begin{picture}(32,72)(0,-12)
 \qbezier(12,36)(12,42)(12,48) 
 \qbezier(12,36)(0,30)(0,24) 
 \qbezier(12,36)(24,30)(24,24) 
 \qbezier(0,0)(0,12)(0,24)
 \qbezier(24,16)(24,20)(24,24) 
 \qbezier(24,16)(16,12)(16,8) 
 \qbezier(24,16)(32,12)(32,8) 
 \qbezier(16,0)(16,4)(16,8)
 \qbezier(32,0)(32,4)(32,8)
 \put(12,51){\makebox(0,0)[b]{\normalsize$a$}}
 \put(0,-3){\makebox(0,0)[t]{\normalsize$c$}}
 \put(16,-3){\makebox(0,0)[t]{\normalsize$x$}}
 \put(25,-3){\makebox(0,0)[tl]{\normalsize$(a\triangle c)\triangle x$}}
 \put(27,24){\makebox(0,0)[l]{\normalsize$a\triangle c$}}
\end{picture}
\end{minipage}
\hfill\mbox{}\vspace{5mm}

\mbox{}\hfill
\begin{minipage}{24pt}
\begin{picture}(24,72)(0,-12)
 \qbezier(12,32)(12,28)(12,24) 
 \qbezier(12,32)(0,40)(0,48) 
 \qbezier(12,32)(24,40)(24,48) 
 \qbezier(12,16)(12,20)(12,24) 
 \qbezier(12,16)(0,8)(0,0) 
 \qbezier(12,16)(24,8)(24,0) 
 \put(0,51){\makebox(0,0)[b]{\normalsize$b$}}
 \put(24,51){\makebox(0,0)[b]{\normalsize$a\triangle b$}}
 \put(0,-3){\makebox(0,0)[t]{\normalsize$c$}}
 \put(24,-3){\makebox(0,0)[t]{\normalsize$x$}}
 \put(15,24){\makebox(0,0)[l]{\normalsize$a$}}
\end{picture}
\end{minipage}
~$\overset{\text{R6}}{\leftrightarrow}$~
\begin{minipage}{32pt}
\begin{picture}(32,72)(0,-12)
 \qbezier(8,36)(8,42)(8,48) 
 \qbezier(8,36)(0,32)(0,28) 
 \qbezier(8,36)(16,32)(16,28) 
 \qbezier(0,0)(0,14)(0,28)
 \qbezier(16,20)(16,24)(16,28)
 \qbezier(32,20)(32,34)(32,48)
 \qbezier(24,12)(24,6)(24,0) 
 \qbezier(24,12)(16,16)(16,20) 
 \qbezier(24,12)(32,16)(32,20) 
 \put(8,51){\makebox(0,0)[b]{\normalsize$b$}}
 \put(22,51){\makebox(0,0)[bl]{\normalsize$x\triangle(b\triangle c)$}}
 \put(0,-3){\makebox(0,0)[t]{\normalsize$c$}}
 \put(24,-3){\makebox(0,0)[t]{\normalsize$x$}}
 \put(11,24){\makebox(0,0)[l]{\normalsize$b\triangle c$}}
\end{picture}
\end{minipage}
\hfill
\begin{minipage}{24pt}
\begin{picture}(24,72)(0,-12)
 \qbezier(12,32)(12,28)(12,24) 
 \qbezier(12,32)(0,40)(0,48) 
 \qbezier(12,32)(24,40)(24,48) 
 \qbezier(12,16)(12,20)(12,24) 
 \qbezier(12,16)(0,8)(0,0) 
 \qbezier(12,16)(24,8)(24,0) 
 \put(0,51){\makebox(0,0)[b]{\normalsize$c$}}
 \put(24,51){\makebox(0,0)[b]{\normalsize$x$}}
 \put(0,-3){\makebox(0,0)[t]{\normalsize$b$}}
 \put(24,-3){\makebox(0,0)[t]{\normalsize$a\triangle b$}}
 \put(15,24){\makebox(0,0)[l]{\normalsize$a$}}
\end{picture}
\end{minipage}
~$\overset{\text{R6}}{\leftrightarrow}$~
\begin{minipage}{32pt}
\begin{picture}(32,72)(0,-12)
 \qbezier(24,36)(24,42)(24,48) 
 \qbezier(24,36)(16,32)(16,28) 
 \qbezier(24,36)(32,32)(32,28) 
 \qbezier(0,20)(0,34)(0,48)
 \qbezier(16,20)(16,24)(16,28)
 \qbezier(32,0)(32,14)(32,28)
 \qbezier(8,12)(8,6)(8,0) 
 \qbezier(8,12)(0,16)(0,20) 
 \qbezier(8,12)(16,16)(16,20) 
 \put(0,51){\makebox(0,0)[b]{\normalsize$c$}}
 \put(24,51){\makebox(0,0)[b]{\normalsize$x$}}
 \put(8,-3){\makebox(0,0)[t]{\normalsize$b$}}
 \put(22,-3){\makebox(0,0)[tl]{\normalsize$x\triangle(b\triangle c)$}}
 \put(11,24){\makebox(0,0)[l]{\normalsize$b\triangle c$}}
\end{picture}
\end{minipage}
\hfill\mbox{}\vspace{5mm}
\caption{Colored Reidemeister moves}
\label{fig:coloredReidemeisterMove}
\end{figure}

\section{Proof of Theorem~\ref{thm:coloring}}

\begin{lemma} \label{lem:axioms of triangle MCB}
Let $X=\bigsqcup_{\lambda\in\Lambda}G_\lambda$ be a multiple conjugation biquandle with $a\triangle b:=b^{-1}a\oline{*}b$.
We have the following.
\begin{itemize}
\item
For any $a\in X$,
\begin{align}
\text{$\triangle a:G_a\to G_{a\triangle a}$ which sends $x$ to $x\triangle a$ is a bijection.} \label{eq:bijection,triangle}
\end{align}
\item
For any $a,x\in X$,
\begin{align}
\text{$\uline{*}x:G_a\to G_{a\uline{*}x}$ and $\oline{*}x:G_a\to G_{a\oline{*}x}$ are bijections.} \label{eq:bijection,uo}
\end{align}
\item
For any $a,b\in G_\lambda$,
\begin{align}
&G_{a\uline{*}b}=G_{a\triangle b},
&&(a\uline{*}b)\triangle(a\triangle b)=b\oline{*}a, \label{eq:R4,u,triangle} \\
&G_{a\oline{*}b}=G_{a\triangle b},
&&(a\oline{*}b)\triangle(a\triangle b)=b\uline{*}a. \label{eq:R4,o,triangle}
\end{align}
\item
For any $a,b\in G_\lambda$ and $x\in X$,
\begin{align}
&(a\triangle b)\uline{*}(x\oline{*}b)=(a\uline{*}x)\triangle(b\uline{*}x), \label{eq:R5-1,u,triangle} \\
&(a\triangle b)\oline{*}(x\uline{*}b)=(a\oline{*}x)\triangle(b\oline{*}x), \label{eq:R5-1,o,triangle} \\
&(x\uline{*}b)\uline{*}(a\triangle b)=x\uline{*}a, \label{eq:R5-2,u,triangle} \\
&(x\oline{*}b)\oline{*}(a\triangle b)=x\oline{*}a. \label{eq:R5-2,o,triangle}
\end{align}
\item
For any $a,b,c\in G_\lambda$,
\begin{align}
(a\triangle c)\triangle(b\triangle c)=a\triangle b. \label{eq:R6,triangle}
\end{align}
\end{itemize}
\end{lemma}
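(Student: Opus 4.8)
The plan is to dispatch the two bijectivity statements first and then establish the R4, R5, and R6 identities, in every case reducing things to a single reformulation of $\triangle$ together with the homomorphism property of $\uline{*}b$ and $\oline{*}b$. Statement \eqref{eq:bijection,uo} is precisely Lemma~\ref{lem:x*e=x}(2), so nothing new is needed there. For \eqref{eq:bijection,triangle} I would factor $x\triangle a=a^{-1}x\oline{*}a$ as the bijection $\oline{*}a\colon G_a\to G_{a\oline{*}a}$ (Lemma~\ref{lem:x*e=x}(2)) precomposed with left multiplication by $a^{-1}$ on $G_a$; the composite is a bijection, and since $a\triangle a=e_a\oline{*}a$ is the identity of $G_{a\oline{*}a}$ (a homomorphism preserves identities), the target is exactly $G_{a\triangle a}=G_{a\oline{*}a}$.

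The engine for the identities is the double description
\[ a\triangle b=b^{-1}a\oline{*}b=ab^{-1}\uline{*}b, \]
where the second equality is \eqref{eq:R14} with $a$ and $b$ interchanged, together with the fact that $\uline{*}b$ and $\oline{*}b$ are group homomorphisms (so that $(b^{-1}a\oline{*}b)^{-1}=a^{-1}b\oline{*}b$, and similarly for $\uline{*}$). With this, \eqref{eq:R5-2,u,triangle} and \eqref{eq:R5-2,o,triangle} are immediate: writing $a\triangle b=b^{-1}a\oline{*}b$ and using the factorization $a=b\cdot(b^{-1}a)$ in $G_\lambda$, axiom \eqref{eq:x*u(ab)} (resp.\ \eqref{eq:x*o(ab)}) collapses $(x\uline{*}b)\uline{*}(b^{-1}a\oline{*}b)$ to $x\uline{*}a$. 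For \eqref{eq:R5-1,u,triangle} and \eqref{eq:R5-1,o,triangle} I would expand the right-hand side: the homomorphism property gives $(a\uline{*}x)\triangle(b\uline{*}x)=\bigl((b^{-1}a)\uline{*}x\bigr)\oline{*}(b\uline{*}x)$, which is the left-hand side of the biquandle relation \eqref{eq:R3-2'} (resp.\ \eqref{eq:R3-3'}); applying that relation returns $(b^{-1}a\oline{*}b)\uline{*}(x\oline{*}b)=(a\triangle b)\uline{*}(x\oline{*}b)$. The R6 identity \eqref{eq:R6,triangle} is handled the same way: the homomorphism property combines $(b\triangle c)^{-1}(a\triangle c)$ into $(b^{-1}a)\oline{*}c$, and then \eqref{eq:x*o(ab)} with the factorization $b=c\cdot(c^{-1}b)$ reduces $\bigl((b^{-1}a)\oline{*}c\bigr)\oline{*}(c^{-1}b\oline{*}c)$ to $(b^{-1}a)\oline{*}b=a\triangle b$.

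The R4 identities are where I expect the real work. The group equalities in \eqref{eq:R4,u,triangle} and \eqref{eq:R4,o,triangle} are easy, since $a\uline{*}b$, $a\triangle b$, and $a\oline{*}b$ all lie in $G_{b\uline{*}b}=G_{b\oline{*}b}$, the two coinciding by (B1). For the element equality in \eqref{eq:R4,u,triangle} I set $c=a\triangle b$ and use the $\uline{*}$-form $c=ab^{-1}\uline{*}b$, so $c^{-1}=ba^{-1}\uline{*}b$ and the homomorphism $\uline{*}b$ gives $c^{-1}(a\uline{*}b)=(ba^{-1}a)\uline{*}b=b\uline{*}b$; then $(a\uline{*}b)\triangle c=(b\uline{*}b)\oline{*}c=(b\oline{*}b)\oline{*}(b^{-1}a\oline{*}b)$ by (B1), and \eqref{eq:x*o(ab)} with $a=b\cdot(b^{-1}a)$ yields $b\oline{*}a$. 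For \eqref{eq:R4,o,triangle} I instead use the $\oline{*}$-form $c^{-1}=a^{-1}b\oline{*}b$, which gives $c^{-1}(a\oline{*}b)=a^{-1}ba\oline{*}b$ and hence $(a\oline{*}b)\triangle c=(a^{-1}ba\oline{*}b)\oline{*}(b^{-1}a\oline{*}b)$; now \eqref{eq:x*o(ab)} with $a=b\cdot(b^{-1}a)$ reduces this to $a^{-1}ba\oline{*}a$, and the final and least obvious move is to read this as $a^{-1}(ba)\oline{*}a=(ba)a^{-1}\uline{*}a=b\uline{*}a$ by one application of \eqref{eq:R14}. The only genuine obstacle is the bookkeeping: choosing, at each step, which of the two forms of $\triangle$ to insert and which direction to run the product axioms \eqref{eq:x*u(ab)} and \eqref{eq:x*o(ab)}; once those choices are pinned down, every step is routine group algebra.
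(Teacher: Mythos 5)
Your proposal is correct and follows essentially the same route as the paper: \eqref{eq:bijection,uo} by citing Lemma~\ref{lem:x*e=x}, \eqref{eq:bijection,triangle} by factoring $\triangle a$ as left multiplication by $a^{-1}$ followed by $\oline{*}a$, and the R4--R6 identities via the two descriptions $a\triangle b=b^{-1}a\oline{*}b=ab^{-1}\uline{*}b$ from \eqref{eq:R14}, the homomorphism property of $\uline{*}b,\oline{*}b$, the product axioms \eqref{eq:x*u(ab)}, \eqref{eq:x*o(ab)}, and (B3). The only deviations are cosmetic: you run the R5 computations from the right-hand side instead of the left, and you get the group equalities in \eqref{eq:R4,u,triangle}--\eqref{eq:R4,o,triangle} from the common target group $G_{b\uline{*}b}=G_{b\oline{*}b}$ and (B1) where the paper invokes \eqref{eq:R14}.
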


\begin{proof}
\begin{itemize}
\item
The map $\triangle a:G_a\to G_{a\triangle a}$ is a well-defined bijection, since it is the composition of the bijections
$a^{-1}\cdot:G_a\to G_a$ defined by $a^{-1}\cdot x=a^{-1}x$ and
$\oline{*}a:G_a\to G_{a\oline{*}a}=G_{a\triangle a}$.

\item
By Lemma~\ref{lem:x*e=x}, $\uline{*}x:G_a\to G_{a\uline{*}x}$ and $\oline{*}x:G_a\to G_{a\oline{*}x}$ are well-defined bijections.

\item
For $a,b\in G_\lambda$, we have $G_{a\uline{*}b}=G_{a\triangle b}=G_{a\oline{*}b}$, since
\begin{align*}
&ab^{-1}\in G_a, && a\triangle b\overset{\eqref{eq:R14}}{=}ab^{-1}\uline{*}b\in G_{a\uline{*}b}, \\
&b^{-1}a\in G_a, && a\triangle b=b^{-1}a\oline{*}b\in G_{a\oline{*}b}.
\end{align*}
For $a,b\in G_\lambda$, we have
\begin{align*}
(a\uline{*}b)\triangle(a\triangle b)
&\overset{\eqref{eq:R14}}{=}
(b^{-1}ab\oline{*}b)\triangle(b^{-1}a\oline{*}b) \\
&=(b^{-1}a\oline{*}b)^{-1}(b^{-1}ab\oline{*}b)\oline{*}(b^{-1}a\oline{*}b) \\
&=(b\oline{*}b)\oline{*}(b^{-1}a\oline{*}b)
\overset{\eqref{eq:x*o(ab)}}{=}b\oline{*}a, \\
(a\oline{*}b)\triangle(a\triangle b)
&=(b^{-1}a\oline{*}b)^{-1}(a\oline{*}b)\oline{*}(b^{-1}a\oline{*}b) \\
&=(a^{-1}ba\oline{*}b)\oline{*}(b^{-1}a\oline{*}b)
\overset{\eqref{eq:x*o(ab)}}{=}a^{-1}ba\oline{*}a
\overset{\eqref{eq:R14}}{=}b\uline{*}a.
\end{align*}

\item
For $a,b\in G_\lambda$ and $x\in X$, we have
\begin{align*}
&(a\triangle b)\uline{*}(x\oline{*}b)
=(b^{-1}a\oline{*}b)\uline{*}(x\oline{*}b)
\overset{\rm(B3)}{=}(b^{-1}a\uline{*}x)\oline{*}(b\uline{*}x) \\
&=(b\uline{*}x)^{-1}(a\uline{*}x)\oline{*}(b\uline{*}x)
=(a\uline{*}x)\triangle(b\uline{*}x), \\
&(a\triangle b)\oline{*}(x\uline{*}b)
=(b^{-1}a\oline{*}b)\oline{*}(x\uline{*}b)
\overset{\rm(B3)}{=}(b^{-1}a\oline{*}x)\oline{*}(b\oline{*}x) \\
&=(b\oline{*}x)^{-1}(a\oline{*}x)\oline{*}(b\oline{*}x)
=(a\oline{*}x)\triangle(b\oline{*}x).
\end{align*}

\item
For $a,b\in G_\lambda$ and $x\in X$, we have
\begin{align*}
&(x\uline{*}b)\uline{*}(a\triangle b)
=(x\uline{*}b)\uline{*}(b^{-1}a\oline{*}b)
\overset{\eqref{eq:x*u(ab)}}{=}x\uline{*}a, \\
&(x\oline{*}b)\oline{*}(a\triangle b)
=(x\oline{*}b)\oline{*}(b^{-1}a\oline{*}b)
\overset{\eqref{eq:x*o(ab)}}{=}x\oline{*}a.
\end{align*}

\item
For $a,b,c\in G_\lambda$, we have
\begin{align*}
(a\triangle c)\triangle(b\triangle c)
&=(c^{-1}a\oline{*}c)\triangle(c^{-1}b\oline{*}c) \\
&=(c^{-1}b\oline{*}c)^{-1}(c^{-1}a\oline{*}c)\oline{*}(c^{-1}b\oline{*}c) \\
&=(b^{-1}a\oline{*}c)\oline{*}(c^{-1}b\oline{*}c) \\
&\overset{\eqref{eq:x*o(ab)}}{=}b^{-1}a\oline{*}b
=a\triangle b.
\end{align*}
\end{itemize}
\end{proof}

\begin{proof}[Proof of Theorem~\ref{thm:coloring}]
We see that $(X,\bigsqcup_{\lambda\in\Lambda}G_\lambda^2,\triangle)$ satisfies the primitive conditions \eqref{eq:R4-1,primitive}--\eqref{eq:R6-4,primitive}.
By Lemma~\ref{lem:axioms of triangle MCB}, it is sufficient to show
\begin{align}
b\in G_a,x=a\triangle b
&\Leftarrow x\in G_{a\uline{*}b},(a\uline{*}b)\triangle x=b\oline{*}a,
\label{eq:R4-1,primitive,half} \\
b\in G_a,x=a\triangle b
&\Leftarrow x\in G_{a\oline{*}b},(a\oline{*}b)\triangle x=b\uline{*}a
\label{eq:R4-2,primitive,half}
\end{align}
for $a,b,c,x\in X$.
The other conditions are easily verified, where we note that $b=x\triangle^{-1}c\sim a$ and $a=x\triangle^{-1}c\sim b$ for \eqref{eq:R6-2,primitive} and \eqref{eq:R6-4,primitive}, respectively.

We show \eqref{eq:R4-1,primitive,half}.
Put $c:=x\uline{*}^{-1}b\in G_a$.
Then
\[ (a\triangle c)\uline{*}(b\oline{*}c)
\overset{\eqref{eq:R5-1,u,triangle}}{=}
(a\uline{*}b)\triangle(c\uline{*}b)
=(a\uline{*}b)\triangle x
=b\oline{*}a
\overset{\eqref{eq:R5-2,o,triangle}}{=}
(b\oline{*}c)\oline{*}(a\triangle c). \]
By Lemma~\ref{lem:x*x=y}, we have $a\triangle c=b\oline{*}c$.
Since $a\triangle c\in G_{a\triangle c}=G_{a\oline{*}c}$, we have $b=(a\triangle c)\oline{*}^{-1}c\in G_a$.
The equality $x=a\triangle b$ follows from
\[ x\triangle(c\triangle b)
=(c\uline{*}b)\triangle(c\triangle b)
\overset{\eqref{eq:R4,u,triangle}}{=}b\oline{*}c
=a\triangle c
\overset{\eqref{eq:R6,triangle}}{=}(a\triangle b)\triangle(c\triangle b). \]
Then we have \eqref{eq:R4-1,primitive,half}.
We show \eqref{eq:R4-2,primitive,half}.
Put $c:=x\oline{*}^{-1}b\in G_a$.
Then
\[ (a\triangle c)\oline{*}(b\uline{*}c)
\overset{\eqref{eq:R5-1,o,triangle}}{=}
(a\oline{*}b)\triangle(c\oline{*}b)
=(a\oline{*}b)\triangle x
=b\uline{*}a
\overset{\eqref{eq:R5-2,u,triangle}}{=}
(b\uline{*}c)\uline{*}(a\triangle c). \]
By Lemma~\ref{lem:x*x=y}, we have $a\triangle c=b\uline{*}c$.
Since $a\triangle c\in G_{a\triangle c}=G_{a\uline{*}c}$, we have $b=(a\triangle c)\uline{*}^{-1}c\in G_a$.
The equality $x=a\triangle b$ follows from
\[ x\triangle(c\triangle b)
=(c\oline{*}b)\triangle(c\triangle b)
\overset{\eqref{eq:R4,o,triangle}}{=}b\uline{*}c
=a\triangle c
\overset{\eqref{eq:R6,triangle}}{=}(a\triangle b)\triangle(c\triangle b). \]
Then we have \eqref{eq:R4-2,primitive,half}.
\end{proof}

\section{The universality of an MCB}
\label{sect:universality}

In this section, we see that a multiple conjugation biquandle is the universal biquandle to define coloring invariants for $S^1$-oriented handlebody-links.

\begin{theorem} \label{thm:universality}
Let $X$ be a biquandle, $\triangle:P\to X$ a map, where $P$ is a subset of $X\times X$.
We write $a\sim b$ if $(a,b)\in P$.
Suppose $(X,P,\triangle)$ satisfies the primitive conditions \eqref{eq:R4-1,primitive}--\eqref{eq:R6-4,primitive}.
\begin{itemize}
\item[(1)]
We define
$X_1:=\{b\in X\,|\,\text{there exists $a\in X$ such that $a\sim b$}\}$,
$X_2:=X-X_1$.
Then $X_1,X_2$ are subbiquandles of $X$ satisfying
\begin{align*}
&X_1\uline{*}a=X_1\oline{*}a=X_1, &&X_2\uline{*}a=X_2\oline{*}a=X_2
\end{align*}
for any $a\in X$, where
$X_i\uline{*}a=\{x\uline{*}a\,|\,x\in X_i\}$,
$X_i\oline{*}a=\{x\oline{*}a\,|\,x\in X_i\}$.
\item[(2)]
The relation $\sim$ is an equivalence relation on $X_1$.
\item[(3)]
Let $X_1=\bigsqcup_{\lambda\in\Lambda}G_\lambda$ be the partition of $X_1$ determined by the equivalence relation $\sim$, that is, $a\sim b$ if and only if $a,b\in G_\lambda$ for some $\lambda\in\Lambda$.
Then $X_1$ is a multiple conjugation biquandle.
\end{itemize}
\end{theorem}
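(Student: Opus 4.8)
The plan is to establish the three assertions in order, obtaining (1) and (2) by specializing the primitive conditions, and then, in (3), reconstructing the group structure on each $\sim$-class directly from $\triangle$ and the biquandle operations. For (1), I would use the first clause of \eqref{eq:R5-1,primitive}, namely $a\sim b\Leftrightarrow a\uline{*}x\sim b\uline{*}x$, together with the bijectivity of $\uline{*}a$ from axiom (B2): if $a'\sim b$ then $a'\uline{*}a\sim b\uline{*}a$, and conversely any witness $c\sim b\uline{*}a$ pulls back along the bijection $\uline{*}a$ to a witness $(c\uline{*}^{-1}a)\sim b$. This yields $X_1\uline{*}a=X_1$, whence $X_2\uline{*}a=X_2$ since $\uline{*}a$ is a bijection of $X$ preserving $X_1$; the operation $\oline{*}$ is handled identically via \eqref{eq:R5-2,primitive}. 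Closure under $\uline{*},\oline{*}$ and $S$ then makes $X_1,X_2$ subbiquandles. I would record here a consequence used later: since \eqref{eq:R5-1,primitive} and \eqref{eq:R5-2,primitive} read $a\sim b\Leftrightarrow a\uline{*}x\sim b\uline{*}x$ and similarly for $\oline{*}$, each of $\uline{*}x$ and $\oline{*}x$ induces a bijection on the set of $\sim$-classes.

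For (2), I would specialize the R6 conditions. Reflexivity: for $b\in X_1$ choose $a$ with $a\sim b$ and apply \eqref{eq:R6-3,primitive} with its third variable equal to $b$; the conclusion contains $b\sim b$. Symmetry: given $a\sim b$, reflexivity gives $a\sim a$, and \eqref{eq:R6-3,primitive} with third variable equal to $a$ yields $b\sim a$. Transitivity: given $a\sim b$ and $b\sim c$, condition \eqref{eq:R6-1,primitive} with $x=b\triangle c$ yields $a\sim c$. Thus $\sim$ is an equivalence relation on $X_1$.

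The substance lies in (3). As $X_1$ is already a biquandle by (1), it suffices to put a group structure on each class $G_\lambda$ so that the axioms of Definition~\ref{def:MCB} hold. Guided by the identity $a\triangle b=b^{-1}a\oline{*}b$ valid in any genuine MCB, I would reconstruct the group from its left division: for $a,b$ in one class put
\[ L(a,b):=(a\triangle b)\oline{*}^{-1}b, \]
which returns to $G_\lambda$ because \eqref{eq:R6-3,primitive} gives $a\triangle b\sim b\triangle b$ and $\oline{*}^{-1}b$ carries that class back to $G_\lambda$ by the class-bijection noted in (1). I would then set the identity $e_\lambda:=L(a,a)$ (to be shown independent of $a$), the inverse $a^{-1}:=L(e_\lambda,a)$, and the product $a\cdot b:=L(b,a^{-1})$, so that $L(a,b)$ becomes $b^{-1}a$ a posteriori. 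The group laws, above all associativity, are to be extracted from the heap-type relation \eqref{eq:R6,triangle}, $(a\triangle c)\triangle(b\triangle c)=a\triangle b$, which is exactly the content of the conclusions of \eqref{eq:R6-1,primitive} and \eqref{eq:R6-3,primitive}.

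Once the groups are in place, the remaining MCB axioms unwind from the primitive conditions in near-direct correspondence with Lemma~\ref{lem:axioms of triangle MCB} read backwards: the homomorphism property of $\uline{*}x,\oline{*}x$ and the multiplicativity relations \eqref{eq:x*u(ab)}, \eqref{eq:x*o(ab)} follow from the distributivity clauses of \eqref{eq:R5-1,primitive}--\eqref{eq:R5-2,primitive} (such as $(a\triangle b)\uline{*}(x\oline{*}b)=(a\uline{*}x)\triangle(b\uline{*}x)$ and $(x\uline{*}b)\uline{*}(a\triangle b)=x\uline{*}a$), while relation \eqref{eq:R14} comes from the R4 clauses \eqref{eq:R4-1,primitive}--\eqref{eq:R4-2,primitive}, which supply $(a\uline{*}b)\triangle(a\triangle b)=b\oline{*}a$. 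I expect the main obstacle to be the group reconstruction itself---showing that $e_\lambda=L(a,a)$ is independent of the base point, that $L(a,b)=b^{-1}a$ for the product just defined, and in particular associativity---all of which must be forced out of \eqref{eq:R6,triangle} and the $\oline{*}$-compatibility relations while tracking carefully that $\triangle b$ and $\oline{*}b$ move between distinct classes. After the group laws are secured, verifying the homomorphism and multiplicativity axioms is guided bookkeeping.
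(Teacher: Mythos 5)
Parts (1) and (2) of your proposal are correct and essentially identical to the paper's own argument: the equivalences in \eqref{eq:R5-1,primitive} and \eqref{eq:R5-2,primitive} show that $\uline{*}x$ and $\oline{*}x$ restrict to bijections of $X_1$ (hence of $X_2$), the map $S$ then decomposes accordingly, and the R6 clauses give reflexivity, symmetry and transitivity exactly as in the paper. Your reduction of (3) to constructing a group structure on each $\sim$-class from $\triangle$ and the biquandle operations, and then reading Lemma~\ref{lem:axioms of triangle MCB} backwards, is also the paper's strategy: the paper axiomatizes this intermediate stage as a ``triangle MCB'' and proves in Proposition~\ref{prop:triangle2MCB} that any triangle MCB is an MCB.

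The gap is that the group reconstruction, which is the real content of (3), is only gestured at, and the one mechanism you commit to is insufficient as stated. You assert that associativity is ``to be extracted from the heap-type relation \eqref{eq:R6,triangle}''; but $\triangle$ is not a heap operation on a single set --- $x\mapsto x\triangle b$ twists by $\oline{*}b$ and lands in the different class $G_{b\triangle b}$ --- and in the paper associativity is obtained by a chain that needs (B3) and the compatibilities \eqref{eq:R5-1,o,triangle}, \eqref{eq:R5-2,o,triangle} in addition to \eqref{eq:R6,triangle}, and finally injectivity of the map $\triangle(ab)$, which rests on the bijectivity statement \eqref{eq:bijection,triangle}, extracted from the uniqueness clauses \eqref{eq:R6-2,primitive} and \eqref{eq:R6-4,primitive} that your proposal never invokes anywhere. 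A second structural worry: your product is built one-sidedly from $\oline{*}$ via $L(a,b)=(a\triangle b)\oline{*}^{-1}b$, whereas the engine of the paper's verification is the two-sided identity \eqref{eq:2ab}, namely $a\uline{*}b\triangle^{-1}b=b\oline{*}a\triangle^{-1}a$, proved from \eqref{eq:R4,u,triangle} and \eqref{eq:R6,triangle}. Having both an $\uline{*}$-expression and an $\oline{*}$-expression for the product is precisely what makes the two unit laws $e_\lambda a=a$, $ae_\lambda=a$ and the two inverse laws one-line checks, and what powers the homomorphism proofs for $\uline{*}x$ and $\oline{*}x$; with a purely $\oline{*}$-based $L$ you have no handle on the $\uline{*}$-sided laws.

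Finally, the base-point independence of the identity, which you correctly flag but do not prove, is a genuinely long computation in the paper: $a\triangle a\uline{*}^{-1}a=b\triangle b\uline{*}^{-1}b$ (\eqref{eq:2e}) is derived through \eqref{eq:R5-2,u,triangle}, \eqref{eq:2ab}, \eqref{eq:R5-2,o,triangle} and \eqref{eq:R6,triangle}, and identifying your $\oline{*}$-flavoured unit $(a\triangle a)\oline{*}^{-1}a$ with the $\uline{*}$-flavoured one requires the square-root element $\alpha$ with $\alpha\uline{*}\alpha=\alpha\oline{*}\alpha=a$ from Lemma~\ref{lem:x*x=y}(2) (this is equality \eqref{eq:uo2e}). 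Likewise \eqref{eq:R14} does not follow from the R4 clauses alone: the paper's derivation threads \eqref{eq:R5-2,o,triangle}, (B3), \eqref{eq:R5-1,o,triangle} and \eqref{eq:R4,o,triangle} together. So while your plan points at the right target and its skeleton matches the paper's, the steps you defer as ``forced out of the relations'' and ``guided bookkeeping'' are exactly the substance of the theorem, and the specific one-sided, heap-only route you propose would need to be repaired along the lines above before it could be carried through.
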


By the definition, elements in $X_2$ cannot be used for colorings at a vertex.
For a handlebody-knot of genus greater than one, we see that they also cannot be used for colors of any arcs.
In this sense, an MCB is the universal biquandle for $S^1$-oriented handlebody-links.
A multiple conjugation quandle (MCQ)~\cite{Ishii15MCQ} was introduced as the universal symmetric quandle for unoriented handlebody-links in the same sense, where we note that the axioms of an MCQ coincide with that of an MCB under the assumption that $x\oline{*}y=x$.

In~\cite{Iijima17}, Iijima showed that an MCQ is also the universal quandle for $S^1$-oriented handlebody-links, although it was introduced as the universal symmetric quandle for unoriented handlebody-links.
As a corollary of Theorem~\ref{thm:universality}, we also have this universality.
In~\cite{IshiiNelson16}, Nelson and the first author introduced the notion of a partially multiplicative biquandle.

\begin{definition}[\cite{IshiiNelson16}]
A \textit{partially multiplicative biquandle} is a biquandle $X$ with a subset $\widetilde{P}$ of $X\times X$ and a map $\bullet:\widetilde{P}\to X$ satisfying the following axioms, where $a\bullet b$ stands for $\bullet(a,b)$.
\begin{itemize}
\item[(i)]
$x\mapsto a\bullet x$, $x\mapsto x\bullet b$ are injective.
\item[(ii)]
$(a,b\uline{*}a)\in\widetilde{P}
\Leftrightarrow(b,a\oline{*}b)\in\widetilde{P}
\Rightarrow a\bullet(b\uline{*}a)=b\bullet(a\oline{*}b)$.
\item[(iii)]
$(a,b)\in\widetilde{P}
\Leftrightarrow(a\uline{*}x,b\uline{*}(x\oline{*}a))\in\widetilde{P}
\Leftrightarrow(a\oline{*}x,b\oline{*}(x\uline{*}a))\in\widetilde{P}
\Rightarrow$
\begin{align*}
&x\uline{*}(a\bullet b)=(x\uline{*}a)\uline{*}b,
&&(a\bullet b)\uline{*}x=(a\uline{*}x)\bullet(b\uline{*}(x\oline{*}a)), \\
&x\oline{*}(a\bullet b)=(x\oline{*}a)\oline{*}b,
&&(a\bullet b)\oline{*}x=(a\oline{*}x)\bullet(b\oline{*}(x\uline{*}a)).
\end{align*}
\item[(iv)]
$(a,b),(a\bullet b,c)\in\widetilde{P}
\Leftrightarrow(b,c),(a,b\bullet c)\in\widetilde{P}
\Rightarrow(a\bullet b)\bullet c=a\bullet(b\bullet c)$.
\item[(v)]
$(a,b),(c,d)\in\widetilde{P},a\bullet b=c\bullet d
\Leftrightarrow\exists e\in X$ such that $(a,e),(e,d)\in\widetilde{P},a\bullet e=c,e\bullet d=b$.
\end{itemize}
\end{definition}

The axioms of a partially multiplicative biquandle is obtained from colored Reidemeister moves like the primitive conditions \eqref{eq:R4-1,primitive}--\eqref{eq:R6-4,primitive}, where the coloring is defined by
\vspace{0.5em}
\begin{center}
\begin{minipage}{65pt}
\begin{picture}(65,40)(-5,0)
 \put(40,40){\vector(-1,-1){40}}
 \put(0,40){\line(1,-1){18}}
 \put(22,18){\vector(1,-1){18}}
 \put(5,35){\makebox(0,0){\normalsize$\nearrow$}}
 \put(5,5){\makebox(0,0){\normalsize$\searrow$}}
 \put(35,35){\makebox(0,0){\normalsize$\searrow$}}
 \put(35,5){\makebox(0,0){\normalsize$\nearrow$}}
 \put(-3,40){\makebox(0,0)[r]{\normalsize$a$}}
 \put(-3,0){\makebox(0,0)[r]{\normalsize$b$}}
 \put(43,40){\makebox(0,0)[l]{\normalsize$b\oline{*}a$}}
 \put(43,0){\makebox(0,0)[l]{\normalsize$a\uline{*}b$}}
\end{picture}
\end{minipage}
\hspace{5em}
\begin{minipage}{65pt}
\begin{picture}(65,40)(-5,0)
 \put(0,40){\vector(1,-1){40}}
 \put(40,40){\line(-1,-1){18}}
 \put(18,18){\vector(-1,-1){18}}
 \put(5,35){\makebox(0,0){\normalsize$\nearrow$}}
 \put(5,5){\makebox(0,0){\normalsize$\searrow$}}
 \put(35,35){\makebox(0,0){\normalsize$\searrow$}}
 \put(35,5){\makebox(0,0){\normalsize$\nearrow$}}
 \put(-3,40){\makebox(0,0)[r]{\normalsize$a$}}
 \put(-3,0){\makebox(0,0)[r]{\normalsize$b$}}
 \put(43,40){\makebox(0,0)[l]{\normalsize$b\uline{*}a$}}
 \put(43,0){\makebox(0,0)[l]{\normalsize$a\oline{*}b$}}
\end{picture}
\end{minipage}
\end{center}
\vspace{0.5em}
at each crossing, and
\vspace{0.5em}
\begin{center}
\begin{minipage}{50pt}
\begin{picture}(50,40)(-5,0)
 \put(20,20){\vector(0,-1){20}}
 \put(0,40){\vector(1,-1){20}}
 \put(40,40){\vector(-1,-1){20}}
 \put(21,10){\makebox(0,0){\normalsize$\rightarrow$}}
 \put(5,35){\makebox(0,0){\normalsize$\nearrow$}}
 \put(35,35){\makebox(0,0){\normalsize$\searrow$}}
 \put(-3,40){\makebox(0,0)[r]{\normalsize$a$}}
 \put(43,40){\makebox(0,0)[l]{\normalsize$b$}}
 \put(23,0){\makebox(0,0)[l]{\normalsize$a\bullet b$}}
\end{picture}
\end{minipage}
\hspace{5em}
\begin{minipage}{50pt}
\begin{picture}(50,40)(-5,0)
 \put(20,40){\vector(0,-1){20}}
 \put(20,20){\vector(-1,-1){20}}
 \put(20,20){\vector(1,-1){20}}
 \put(21,30){\makebox(0,0){\normalsize$\rightarrow$}}
 \put(5,5){\makebox(0,0){\normalsize$\searrow$}}
 \put(35,5){\makebox(0,0){\normalsize$\nearrow$}}
 \put(23,40){\makebox(0,0)[l]{\normalsize$a\bullet b$}}
 \put(-3,0){\makebox(0,0)[r]{\normalsize$a$}}
 \put(43,0){\makebox(0,0)[l]{\normalsize$b$}}
\end{picture}
\end{minipage}
\end{center}
\vspace{0.5em}
at each vertex.
Although the axioms of a partially multiplicative biquandle are almost identical to the primitive conditions \eqref{eq:R4-1,primitive}--\eqref{eq:R6-4,primitive} under the correspondence
\begin{align*}
&a\bullet b=b\triangle^{-1}a=a(b\oline{*}^{-1}a), \\
&\widetilde{P}=\{(a,b\triangle a)\,|\,(b,a)\in P\}=\{(a,a^{-1}b\oline{*}a)\,|\,(a,b)\in\textstyle\bigsqcup_{\lambda\in\Lambda}G_\lambda^2\},
\end{align*}
the axiom (i) is an additional axiom to simplified the axioms.
Fortunately, we see that the axiom (i) is a necessary condition as follows.
By Theorem~\ref{thm:universality}, a partially multiplicative biquandle consists of a multiple conjugation biquandle and a biquandle.
Then the axiom (i) follows from Lemma~\ref{lem:x*e=x} (3), since we have $a^{-1}(a\bullet b)\oline{*}a=(a\bullet b)\triangle a=b$.

We prove Theorem~\ref{thm:universality} (1), (2) below, and (3) in the next section.

\begin{proof}
\begin{itemize}
\item[(1)]
We show that $\uline{*}x:X_1\to X_1$ is a well-defined bijection for any $x\in X$.
For any $b\in X_1$, there exists $a\in X$ such that $a\sim b$.
By \eqref{eq:R5-1,primitive}, we have $a\uline{*}x\sim b\uline{*}x$ and $a\uline{*}^{-1}x\sim b\uline{*}^{-1}x$, which imply $b\uline{*}x,b\uline{*}^{-1}x\in X_1$.
Therefore $\uline{*}x,\uline{*}^{-1}x:X_1\to X_1$ are well-defined bijections.
In the same way, we see that $\oline{*}x:X_1\to X_1$ is a well-defined bijection for any $x\in X$.
Since
\begin{align*}
&\uline{*}x,\oline{*}x:X_1\to X_1,
&&\uline{*}x,\oline{*}x:X_1\sqcup X_2\to X_1\sqcup X_2
\end{align*}
are bijections, $\uline{*}x,\oline{*}x:X_2\to X_2$ are well-defined bijections.
On $X\times X=(X_1\times X_1)\sqcup(X_1\times X_2)\sqcup(X_2\times X_1)\sqcup(X_2\times X_2)$, the bijection $S:X\times X\to X\times X$ defined by $S(x,y)=(y\oline{*}x,x\uline{*}y)$ is decomposed into the four bijections
\begin{align*}
&S:X_1\times X_1\to X_1\times X_1,
&&S:X_1\times X_2\to X_2\times X_1, \\
&S:X_2\times X_1\to X_1\times X_2,
&&S:X_2\times X_2\to X_2\times X_2.
\end{align*}
Therefore $X_1,X_2$ are subbiquandles of $X$.

\item[(2)]
For any $a\in X_1$, there exists $b\in X$ such that $b\sim a$ by the assumption.
From \eqref{eq:R6-3,primitive}, $b\sim a,b\sim a,x=b\triangle a\Rightarrow a\sim a$.
By \eqref{eq:R6-3,primitive}, $a\sim b$ with $a\sim a$ implies $b\sim a$.
Suppose $a\sim b$, $b\sim c$.
By \eqref{eq:R6-1,primitive}, we have $a\sim c$.
Thus $\sim$ is an equivalence relation on $X_1$.
\end{itemize}
\end{proof}

\section{Proof of Theorem~\ref{thm:universality} (3)}

We introduce the notion of a triangle MCB.
Although it is defined as the disjoint union of sets, it turns out that a triangle MCB consists of the disjoint union of groups.
Furthermore, we show that a triangle MCB is an MCB.
At the end of this section, we prove Theorem~\ref{thm:universality} (3).

\begin{definition}
A \textit{triangle MCB} $X=\bigsqcup_{\lambda\in\Lambda}G_\lambda$ is a biquandle $(X,\uline{*},\oline{*})$ with a map $\triangle:\bigsqcup_{\lambda\in\Lambda}G_\lambda^2\to X$ satisfying \eqref{eq:bijection,triangle}--\eqref{eq:R6,triangle}, where $G_\lambda$ is not necessarily a group.
\end{definition}

\begin{lemma} \label{lem:unique identity and inverse}
Let $X=\bigsqcup_{\lambda\in\Lambda}G_\lambda$ be a triangle MCB.
For $a,b\in G_\lambda$, we have
\begin{align}
&a\uline{*}b\triangle^{-1}b=b\oline{*}a\triangle^{-1}a
\in G_\lambda, \label{eq:2ab} \\
&a\triangle a\uline{*}^{-1}a=b\triangle b\uline{*}^{-1}b
\in G_\lambda. \label{eq:2e}
\end{align}
For $a\in G_\lambda$, we have
\begin{align}
&a\triangle a\uline{*}^{-1}a=\alpha\triangle\alpha
=a\triangle a\oline{*}^{-1}a, \label{eq:uo2e} \\
&a\triangle a\uline{*}^{-1}a\triangle a\uline{*}^{-1}a
=a\triangle a\oline{*}^{-1}a\triangle a\oline{*}^{-1}a
\in G_\lambda, \label{eq:uo2inverse}
\end{align}
where $\alpha\in X$ is the unique element satisfying
$\alpha\uline{*}\alpha=\alpha\oline{*}\alpha=a$.
\end{lemma}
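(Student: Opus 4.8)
The plan is to read the four displayed identities as exactly the data that will later make each $G_\lambda$ a group: \eqref{eq:2ab} will define a product, \eqref{eq:2e} and \eqref{eq:uo2e} an identity, and \eqref{eq:uo2inverse} an inverse. Throughout I abbreviate $1_a:=a\triangle a$ and, for $x,y\in G_\lambda$, write $m(x,y):=x\uline{*}y\triangle^{-1}y$, which by \eqref{eq:2ab} also equals $y\oline{*}x\triangle^{-1}x$. Two preliminary facts will be used constantly. First, putting $b=a$ in \eqref{eq:R5-2,u,triangle} and \eqref{eq:R5-2,o,triangle} gives $(x\uline{*}a)\uline{*}1_a=x\uline{*}a$ and $(x\oline{*}a)\oline{*}1_a=x\oline{*}a$; since $\uline{*}a,\oline{*}a\colon X\to X$ are bijective (axiom (B2)), this says $y\uline{*}1_a=y\oline{*}1_a=y$ for all $y\in X$, i.e. every $1_a$ is a two-sided right unit. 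Second, putting $b=c$ in \eqref{eq:R6,triangle} and using the bijection \eqref{eq:bijection,triangle} gives $z\triangle 1_c=z$ for all $z\in G_{1_c}$.

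For \eqref{eq:2ab}, the membership statements come from \eqref{eq:R4,u,triangle}, \eqref{eq:R4,o,triangle} together with the observation that $\triangle b$ maps $G_b$ into $G_{b\triangle b}$, so $a\uline{*}b$ and $b\oline{*}a$ land in the codomains of $\triangle b$ and $\triangle a$. For the equality, set $p:=a\uline{*}b\triangle^{-1}b$ and $q:=b\oline{*}a\triangle^{-1}a$; then $p\triangle b=a\uline{*}b$, and \eqref{eq:R4,u,triangle} gives $b\oline{*}a=(a\uline{*}b)\triangle(a\triangle b)=(p\triangle b)\triangle(a\triangle b)$, which by \eqref{eq:R6,triangle} equals $p\triangle a$. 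Since $q\triangle a=b\oline{*}a$ and $\triangle a$ is injective on $G_\lambda$, we conclude $p=q$. For \eqref{eq:uo2e}, let $\alpha$ be the element of Lemma~\ref{lem:x*x=y} with $\alpha\uline{*}\alpha=\alpha\oline{*}\alpha=a$. Applying \eqref{eq:R5-1,u,triangle} with the three inputs all equal to $\alpha$ gives $(\alpha\triangle\alpha)\uline{*}a=(\alpha\uline{*}\alpha)\triangle(\alpha\uline{*}\alpha)=1_a$, and \eqref{eq:R5-1,o,triangle} likewise gives $(\alpha\triangle\alpha)\oline{*}a=1_a$; since $1_a=(a\triangle a\uline{*}^{-1}a)\uline{*}a$ and $\uline{*}a$ is injective, $a\triangle a\uline{*}^{-1}a=\alpha\triangle\alpha$, and symmetrically $a\triangle a\oline{*}^{-1}a=\alpha\triangle\alpha$. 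Writing $e_a:=a\triangle a\uline{*}^{-1}a$, this identifies $e_a$ with $1_\alpha$, so $e_a$ is a two-sided right unit, and the second preliminary fact (with $c=\alpha$, $z=1_\alpha$) gives $e_a\triangle e_a=e_a$ and hence $\triangle e_a=\mathrm{id}_{G_\lambda}$.

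Then \eqref{eq:2e} is immediate. For $a,b\in G_\lambda$ the elements $e_a,e_b$ are both two-sided right units in $G_\lambda$ with $\triangle e_a=\triangle e_b=\mathrm{id}_{G_\lambda}$, so evaluating the two expressions for $m(e_a,e_b)$ in \eqref{eq:2ab} yields $e_a=e_a\uline{*}e_b\triangle^{-1}e_b=e_b\oline{*}e_a\triangle^{-1}e_a=e_b$.

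The hard part is \eqref{eq:uo2inverse}, which says the inverse built through $\uline{*}$ equals the one built through $\oline{*}$; in group language this is the equality of a left and a right inverse, normally requiring associativity that is not yet available. Set $w:=e_a\triangle a$, $i_u:=w\uline{*}^{-1}a$ and $i_o:=w\oline{*}^{-1}a$, so the two sides of \eqref{eq:uo2inverse} are $i_u,i_o\in G_\lambda$ with $i_u\uline{*}a=w=i_o\oline{*}a$. My plan is to prove $i_o\uline{*}a=w$, after which injectivity of $\uline{*}a$ on $G_\lambda$ forces $i_u=i_o$. The device is to show $a\uline{*}i_o=\alpha$: the second form of $m(a,i_o)$ in \eqref{eq:2ab} equals $i_o\oline{*}a\triangle^{-1}a=w\triangle^{-1}a=e_a$, so comparing with the first form gives $a\uline{*}i_o=e_a\triangle i_o$; feeding this into \eqref{eq:R5-2,u,triangle} (inputs $x=a$, $b=i_o$, $d=e_a$) gives $(a\uline{*}i_o)\uline{*}(a\uline{*}i_o)=a\uline{*}e_a=a$, whence by axiom (B1) and the uniqueness in Lemma~\ref{lem:x*x=y} we get $a\uline{*}i_o=\alpha$, and then \eqref{eq:R5-2,o,triangle} (with $e_a\triangle i_o=\alpha$) gives $a\oline{*}i_o=\alpha$ as well. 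Finally, \eqref{eq:R4,u,triangle} and \eqref{eq:R4,o,triangle} applied to the pair $(a,i_o)$ both compute $\alpha\triangle(a\triangle i_o)$, the former equalling $i_o\oline{*}a=w$ and the latter $i_o\uline{*}a$, so $i_o\uline{*}a=w$. I expect the identification $a\uline{*}i_o=\alpha$ through the square-root uniqueness of Lemma~\ref{lem:x*x=y} to be the crux of the entire argument.
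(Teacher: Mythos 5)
Your proof is correct, and while it matches the paper for two of the four identities, it takes a genuinely different route for the other two. For \eqref{eq:2ab} and \eqref{eq:uo2e} your argument coincides with the paper's (the paper writes \eqref{eq:2ab} as a single chain through \eqref{eq:R6,triangle} and \eqref{eq:R4,u,triangle}; your injectivity-of-$\triangle a$ phrasing is the same computation). The divergence begins with \eqref{eq:2e}: the paper proves it first and directly, by a longer chain through \eqref{eq:R5-2,u,triangle}, \eqref{eq:2ab}, \eqref{eq:R5-2,o,triangle} and \eqref{eq:R6,triangle} that never mentions $\alpha$, whereas you prove \eqref{eq:uo2e} first (a legitimate reordering, since its proof is independent of \eqref{eq:2e}) and then get \eqref{eq:2e} in two lines from your preliminary facts that every $c\triangle c$ is a right unit for both $\uline{*}$ and $\oline{*}$ on all of $X$ and that $\triangle(c\triangle c)$ is the identity on $G_{c\triangle c}$ --- reusable facts the paper never isolates, and which foreshadow the unit verifications in Proposition~\ref{prop:triangle2MCB}. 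For \eqref{eq:uo2inverse}, the paper inserts $\uline{*}^{-1}\alpha\uline{*}\alpha$ and applies \eqref{eq:R5-1,u,triangle}, \eqref{eq:R5-1,o,triangle} to rewrite the two sides as $\alpha\triangle\alpha\uline{*}^{-1}\alpha\triangle\alpha$ and $\alpha\triangle\alpha\oline{*}^{-1}\alpha\triangle\alpha$, identifying them by \eqref{eq:uo2e} applied at $\alpha$ (i.e.\ descending to the square root of the square root); your argument instead stays at the level of $a$ and runs in ``left inverse equals right inverse'' style: you force $a\uline{*}i_o=a\oline{*}i_o=\alpha$ via the uniqueness clause of Lemma~\ref{lem:x*x=y}~(2), then trade $\oline{*}$ for $\uline{*}$ with the pair \eqref{eq:R4,u,triangle}, \eqref{eq:R4,o,triangle} to conclude $i_o\uline{*}a=w=i_u\uline{*}a$. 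Your route is more explanatory (it exhibits the mechanism: $a\uline{*}i_o$ is pinned down as the unique square root of $a$), while the paper's is a shorter uniform computation requiring no auxiliary unit facts. Two cosmetic repairs: your citation of \eqref{eq:R5-2,u,triangle} lists an input ``$d=e_a$'' that has no counterpart in that identity (you mean the element playing the role of $a$ there); and the memberships you use silently --- $e_a\in G_\lambda$, needed both for writing $\triangle e_a=\mathrm{id}_{G_\lambda}$ and for applying \eqref{eq:2ab} to the pairs $(e_a,e_b)$ and $(a,i_o)$, and $i_u,i_o\in G_\lambda$ --- deserve the one-line justification $G_{\alpha\triangle\alpha}=G_{\alpha\uline{*}\alpha}=G_\lambda$ combined with \eqref{eq:bijection,uo}, exactly as the paper disposes of membership in the closing sentence of its proof.
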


\begin{proof}
The equality \eqref{eq:2ab} follows from
\begin{align*}
&a\uline{*}b\triangle^{-1}b
=(a\uline{*}b\triangle^{-1}b)\triangle(a\triangle b\triangle^{-1}b)\triangle^{-1}a \\
&\overset{\eqref{eq:R6,triangle}}{=}
(a\uline{*}b)\triangle(a\triangle b)\triangle^{-1}a
\overset{\eqref{eq:R4,u,triangle}}{=}
b\oline{*}a\triangle^{-1}a.
\end{align*}
The equality \eqref{eq:2e} follows from
\begin{align*}
a\triangle a\uline{*}^{-1}a
&=(a\triangle a\uline{*}^{-1}a)\uline{*}b\uline{*}^{-1}b \\
&\overset{\eqref{eq:R5-2,u,triangle}}{=}
(a\triangle a)\uline{*}(b\triangle a)\uline{*}^{-1}b \\
&=(a\triangle a)\uline{*}(b\triangle a)\triangle^{-1}(b\triangle a)\triangle(b\triangle a)\uline{*}^{-1}b \\
&\overset{\eqref{eq:2ab}}{=}
(b\triangle a)\oline{*}(a\triangle a)\triangle^{-1}(a\triangle a)\triangle(b\triangle a)\uline{*}^{-1}b \\
&\overset{\eqref{eq:R5-2,o,triangle}}{=}
(b\triangle a\oline{*}^{-1}a)\oline{*}a\triangle^{-1}(a\triangle a)\triangle(b\triangle a)\uline{*}^{-1}b \\
&=(b\triangle a)\triangle^{-1}(a\triangle a)\triangle(b\triangle a)\uline{*}^{-1}b \\
&\overset{\eqref{eq:R6,triangle}}{=}
(b\triangle a)\triangle(a\triangle a)\triangle^{-1}(a\triangle a)\triangle(b\triangle a)\uline{*}^{-1}b \\
&=(b\triangle a)\triangle(b\triangle a)\uline{*}^{-1}b \\
&\overset{\eqref{eq:R6,triangle}}{=}
b\triangle b\uline{*}^{-1}b.
\end{align*}
Then \eqref{eq:uo2e} follows from
\begin{align*}
&a\triangle a\uline{*}^{-1}a
=(\alpha\uline{*}\alpha)\triangle(\alpha\uline{*}\alpha)\uline{*}^{-1}a
\overset{\eqref{eq:R5-1,u,triangle}}{=}
(\alpha\triangle\alpha)\uline{*}(\alpha\oline{*}\alpha)\uline{*}^{-1}a
=\alpha\triangle\alpha, \\
&a\triangle a\oline{*}^{-1}a
=(\alpha\oline{*}\alpha)\triangle(\alpha\oline{*}\alpha)\oline{*}^{-1}a
\overset{\eqref{eq:R5-1,o,triangle}}{=}
(\alpha\triangle\alpha)\oline{*}(\alpha\uline{*}\alpha)\oline{*}^{-1}a
=\alpha\triangle\alpha.
\end{align*}
The equality \eqref{eq:uo2inverse} follows from \eqref{eq:uo2e} and
\begin{align*}
&a\triangle a\uline{*}^{-1}a\triangle a\uline{*}^{-1}a
\overset{\eqref{eq:uo2e}}{=}
\alpha\triangle\alpha\triangle a\uline{*}^{-1}a
=(\alpha\triangle\alpha\uline{*}^{-1}\alpha\uline{*}\alpha)\triangle(\alpha\uline{*}\alpha)\uline{*}^{-1}a \\
&\overset{\eqref{eq:R5-1,u,triangle}}{=}
(\alpha\triangle\alpha\uline{*}^{-1}\alpha\triangle\alpha)\uline{*}(\alpha\oline{*}\alpha)\uline{*}^{-1}a
=\alpha\triangle\alpha\uline{*}^{-1}\alpha\triangle\alpha, \\
&a\triangle a\oline{*}^{-1}a\triangle a\oline{*}^{-1}a
\overset{\eqref{eq:uo2e}}{=}
\alpha\triangle\alpha\triangle a\oline{*}^{-1}a
=(\alpha\triangle\alpha\oline{*}^{-1}\alpha\oline{*}\alpha)\triangle(\alpha\oline{*}\alpha)\oline{*}^{-1}a \\
&\overset{\eqref{eq:R5-1,o,triangle}}{=}
(\alpha\triangle\alpha\oline{*}^{-1}\alpha\triangle\alpha)\oline{*}(\alpha\uline{*}\alpha)\oline{*}^{-1}a
=\alpha\triangle\alpha\oline{*}^{-1}\alpha\triangle\alpha.
\end{align*}
We have $b\oline{*}a\triangle^{-1}a,a\triangle a\uline{*}^{-1}a,a\triangle a\uline{*}^{-1}a\triangle a\uline{*}^{-1}a\in G_\lambda$, since $\uline{*}a,\oline{*}a,\triangle a$ are bijections from $G_\lambda$ to $G_{a\uline{*}a}=G_{a\oline{*}a}=G_{a\triangle a}$.
\end{proof}

\begin{proposition} \label{prop:triangle2MCB}
Let $X=\bigsqcup_{\lambda\in\Lambda}G_\lambda$ be a triangle MCB.
\begin{itemize}
\item[(1)]
For any $\lambda\in\Lambda$, $G_\lambda$ is a group with
\begin{align*}
&ab:=a\uline{*}b\triangle^{-1}b=b\oline{*}a\triangle^{-1}a\in G_\lambda, \\
&e_\lambda:=a\triangle a\uline{*}^{-1}a=a\triangle a\oline{*}^{-1}a\in G_\lambda, \\
&a^{-1}:=a\triangle a\uline{*}^{-1}a\triangle a\uline{*}^{-1}a
=a\triangle a\oline{*}^{-1}a\triangle a\oline{*}^{-1}a\in G_\lambda
\end{align*}
for $a,b\in G_\lambda$.

\item[(2)]
The triangle MCB $X=\bigsqcup_{\lambda\in\Lambda}G_\lambda$ is a multiple conjugation biquandle.
\end{itemize}
\end{proposition}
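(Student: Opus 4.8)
The plan is to base everything on the definition of the product together with the bijectivity of the maps $\triangle b$, $\uline{*}b$, $\oline{*}b$ supplied by \eqref{eq:bijection,triangle} and \eqref{eq:bijection,uo}. Since $ab=a\uline{*}b\triangle^{-1}b=b\oline{*}a\triangle^{-1}a$ and $\triangle b,\triangle a$ are bijections, the product is equivalent to the two \emph{characterizing relations} $(ab)\triangle b=a\uline{*}b$ and $(ab)\triangle a=b\oline{*}a$, which I will use repeatedly; they also show $ab\in G_\lambda$, while the well-definedness of $ab$, $e_\lambda$, $a^{-1}$ is precisely \eqref{eq:2ab}, \eqref{eq:uo2e}, \eqref{eq:uo2inverse} of Lemma~\ref{lem:unique identity and inverse}. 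For the identity and inverse laws I would argue directly. Writing $e_\lambda=\alpha\triangle\alpha$ with $\alpha\uline{*}\alpha=\alpha\oline{*}\alpha=a$ as in \eqref{eq:uo2e}, specialising \eqref{eq:R5-1,u,triangle} and \eqref{eq:R5-1,o,triangle} to $a'=b'=x=\alpha$ gives $e_\lambda\uline{*}a=e_\lambda\oline{*}a=a\triangle a$, whence $e_\lambda a=ae_\lambda=(a\triangle a)\triangle^{-1}a=a$. For inverses, rewriting \eqref{eq:uo2inverse} as $a^{-1}=(e_\lambda\triangle a)\uline{*}^{-1}a=(e_\lambda\triangle a)\oline{*}^{-1}a$ and cancelling gives $a^{-1}\uline{*}a=a^{-1}\oline{*}a=e_\lambda\triangle a$, so $a^{-1}a=aa^{-1}=(e_\lambda\triangle a)\triangle^{-1}a=e_\lambda$.

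Associativity is the genuine difficulty, and my strategy is to first establish two families of identities valid \emph{before} associativity is known, using only the characterizing relations and the biquandle axioms. The \emph{homomorphism identities} $(ab)\uline{*}x=(a\uline{*}x)(b\uline{*}x)$ and $(ab)\oline{*}x=(a\oline{*}x)(b\oline{*}x)$: to verify the first I compute $((ab)\uline{*}x)\triangle(b\uline{*}x)$, rewrite by \eqref{eq:R5-1,u,triangle} and $(ab)\triangle b=a\uline{*}b$, and then invoke the biquandle relation \eqref{eq:R3-1'} to reach $(a\uline{*}x)\uline{*}(b\uline{*}x)$, which is the characterizing relation of the right side; the $\oline{*}$ case uses \eqref{eq:R5-1,o,triangle} and \eqref{eq:R3-2'}. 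The \emph{partial multiplication identities} $x\uline{*}(ab)=(x\uline{*}a)\uline{*}(b\oline{*}a)$ and $x\oline{*}(ab)=(x\oline{*}a)\oline{*}(b\oline{*}a)$ follow by substituting $b\oline{*}a=(ab)\triangle a$ into \eqref{eq:R5-2,u,triangle} and \eqref{eq:R5-2,o,triangle}. Finally, \eqref{eq:R5-2,u,triangle} together with the identity law shows that $a\triangle a$ is the identity of $G_{a\uline{*}a}$ and that $g\triangle(a\triangle a)=g$ there.

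The hard part will be associativity, $(ab)c=a(bc)$, which I would prove by comparing the two products after applying the bijection $\triangle a$. Starting from $((ab)c)\triangle(ab)=c\oline{*}(ab)$ and expanding by the partial multiplication identity $c\oline{*}(ab)=(c\oline{*}a)\oline{*}(b\oline{*}a)$, I would use \eqref{eq:R6,triangle} to rewrite $((ab)c)\triangle a=(((ab)c)\triangle(ab))\triangle(a\triangle(ab))$ and, again by \eqref{eq:R6,triangle} with $(ab)\triangle a=b\oline{*}a$, $a\triangle(ab)=(a\triangle a)\triangle(b\oline{*}a)$. Setting $P=b\oline{*}a$, $Q=c\oline{*}a$ in the group $G_{a\oline{*}a}$, the right-hand side becomes $(Q\oline{*}P)\triangle((a\triangle a)\triangle P)$; replacing $Q\oline{*}P$ by $(PQ)\triangle P$ (a characterizing relation), applying \eqref{eq:R6,triangle} once more, and using $g\triangle(a\triangle a)=g$, this collapses to $PQ=(b\oline{*}a)(c\oline{*}a)$. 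By the homomorphism identity this equals $(bc)\oline{*}a=(a(bc))\triangle a$, so injectivity of $\triangle a$ yields $(ab)c=a(bc)$. The real obstacle is the bookkeeping: arranging the repeated use of \eqref{eq:R6,triangle} so that every $\triangle$-index is driven down to $a$, after which the homomorphism identity closes the argument.

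Finally, part~(2) follows with little extra work once each $G_\lambda$ is known to be a group. The homomorphism axiom of Definition~\ref{def:MCB} is exactly the pair of homomorphism identities; the partial multiplication axioms \eqref{eq:x*u(ab)} and \eqref{eq:x*o(ab)} are the partial multiplication identities; and the last axiom \eqref{eq:R14} holds because both $a^{-1}b\oline{*}a$ and $ba^{-1}\uline{*}a$ reduce to $b\triangle a$ via the characterizing relations $q\oline{*}p=(pq)\triangle p$ with $p=a,q=a^{-1}b$, and $p\uline{*}q=(pq)\triangle q$ with $p=ba^{-1},q=a$. Hence $X=\bigsqcup_{\lambda\in\Lambda}G_\lambda$ is a multiple conjugation biquandle.
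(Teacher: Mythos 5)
Your proposal is correct, and at the two genuinely delicate points it takes a different --- and in places leaner --- route than the paper. For associativity the paper cancels $\triangle(ab)$: it proves $((ab)c)\triangle(ab)=(a(bc))\triangle(ab)$ in a single chain using \eqref{eq:R5-2,o,triangle}, (B3), \eqref{eq:R5-1,o,triangle} and \eqref{eq:R6,triangle}, and so never needs multiplicativity of $\uline{*}x$, $\oline{*}x$ beforehand. You instead cancel $\triangle a$, which forces you to establish the homomorphism identities \emph{before} associativity; this is sound precisely because your inverse-free formulation $(ab)\uline{*}x=(a\uline{*}x)(b\uline{*}x)$ follows from the characterizing relations, \eqref{eq:R5-1,u,triangle} and (B3) alone, whereas the paper's formulation $b^{-1}a\uline{*}x=(b\uline{*}x)^{-1}(a\uline{*}x)$ implicitly invokes associativity through the inverses, which is why the paper defers it to part (2). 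Your auxiliary fact $g\triangle(a\triangle a)=g$ on $G_{a\triangle a}$ also holds without associativity (quickest: write $g=h\triangle a$ using \eqref{eq:bijection,triangle} and apply \eqref{eq:R6,triangle}; your route via \eqref{eq:R5-2,u,triangle} and the identity law works too), and with your $P=b\oline{*}a$, $Q=c\oline{*}a$ the chain $((ab)c)\triangle a=(Q\oline{*}P)\triangle((a\triangle a)\triangle P)=((PQ)\triangle P)\triangle((a\triangle a)\triangle P)=(PQ)\triangle(a\triangle a)=PQ=(bc)\oline{*}a=(a(bc))\triangle a$ checks out line by line, with every application of \eqref{eq:R6,triangle} legitimately inside one $G_\mu$. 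For \eqref{eq:R14} your argument is a genuine simplification: the paper runs a seven-step computation through (B3), \eqref{eq:R5-1,o,triangle}, \eqref{eq:R5-2,o,triangle} and \eqref{eq:R4,o,triangle}, while you observe that once part (1) is in hand the characterizing relations give $a^{-1}b\oline{*}a=(a\cdot a^{-1}b)\triangle a=b\triangle a=(ba^{-1}\cdot a)\triangle a=ba^{-1}\uline{*}a$. Two cosmetic remarks: the well-definedness of $e_\lambda$ also needs \eqref{eq:2e} (independence of the chosen $a$), not only \eqref{eq:uo2e}; and your detour through $\alpha$ to get $e_\lambda\uline{*}a=e_\lambda\oline{*}a=a\triangle a$ is unnecessary, since both equalities are immediate from the two defining expressions $e_\lambda=a\triangle a\uline{*}^{-1}a=a\triangle a\oline{*}^{-1}a$, which is how the paper computes $e_\lambda a=ae_\lambda=a$.
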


\begin{proof}
\begin{itemize}
\item[(1)]
By Lemma~\ref{lem:unique identity and inverse}, the multiplication, identity, and inverse are well-defined.
The associativity $(ab)c=a(bc)$ follows from
\begin{align*}
&(ab)c\triangle(ab)
=c\oline{*}(ab)
=c\oline{*}(b\oline{*}a\triangle^{-1}a)
\overset{\eqref{eq:R5-2,o,triangle}}{=}
(c\oline{*}a)\oline{*}(b\oline{*}a) \\
&\overset{\rm(B3)}{=}
(c\oline{*}b)\oline{*}(a\uline{*}b)
=(bc\triangle b)\oline{*}(a\uline{*}b)
\overset{\eqref{eq:R5-1,o,triangle}}{=}
(bc\oline{*}a)\triangle(b\oline{*}a) \\
&\overset{\eqref{eq:R6,triangle}}{=}
(bc\oline{*}a\triangle^{-1}a)\triangle(b\oline{*}a\triangle^{-1}a)
=a(bc)\triangle(ab).
\end{align*}
We have
\begin{align*}
&e_\lambda a=(a\triangle a\uline{*}^{-1}a)\uline{*}a\triangle^{-1}a=a, \\
&ae_\lambda=(a\triangle a\oline{*}^{-1}a)\oline{*}a\triangle^{-1}a=a.
\end{align*}
We have
\begin{align*}
&a^{-1}a
=(a\triangle a\uline{*}^{-1}a\triangle a\uline{*}^{-1}a)\uline{*}a\triangle^{-1}a
=a\triangle a\uline{*}^{-1}a
=e_\lambda, \\
&aa^{-1}
=(a\triangle a\oline{*}^{-1}a\triangle a\oline{*}^{-1}a)\oline{*}a\triangle^{-1}a
=a\triangle a\oline{*}^{-1}a
=e_\lambda.
\end{align*}

\item[(2)]
The maps $\uline{*}x:G_a\to G_{a\uline{*}x}$ and $\oline{*}x:G_a\to G_{a\oline{*}x}$ are group homomorphism, since $b^{-1}a\uline{*}x=(b\uline{*}x)^{-1}(a\uline{*}x)$ and $b^{-1}a\oline{*}x=(b\oline{*}x)^{-1}(a\oline{*}x)$ follow from
\begin{align*}
&(b\uline{*}x)^{-1}(a\uline{*}x)\oline{*}(b\uline{*}x)
=(a\uline{*}x)\triangle(b\uline{*}x) \\
&\overset{\eqref{eq:R5-1,u,triangle}}{=}
(a\triangle b)\uline{*}(x\oline{*}b)
=(b^{-1}a\oline{*}b)\uline{*}(x\oline{*}b)
\overset{\rm(B3)}{=}
(b^{-1}a\uline{*}x)\oline{*}(b\uline{*}x), \\
&(b\oline{*}x)^{-1}(a\oline{*}x)\oline{*}(b\oline{*}x)
=(a\oline{*}x)\triangle(b\oline{*}x) \\
&\overset{\eqref{eq:R5-1,o,triangle}}{=}
(a\triangle b)\oline{*}(x\uline{*}b)
=(b^{-1}a\oline{*}b)\oline{*}(x\uline{*}b)
\overset{\rm(B3)}{=}
(b^{-1}a\oline{*}x)\oline{*}(b\oline{*}x).
\end{align*}
For $a,b\in G_\lambda$ and $x\in X$, we have
\begin{align*}
&x\uline{*}ab
\overset{\eqref{eq:R5-2,u,triangle}}{=}
(x\uline{*}a)\uline{*}(ab\triangle a)
=(x\uline{*}a)\uline{*}(b\oline{*}a), \\
&x\oline{*}ab
\overset{\eqref{eq:R5-2,o,triangle}}{=}
(x\oline{*}a)\oline{*}(ab\triangle a)
=(x\oline{*}a)\oline{*}(b\oline{*}a),
\end{align*}
and
\begin{align*}
a^{-1}b\oline{*}a
&\overset{\eqref{eq:R5-2,o,triangle}}{=}
(a^{-1}b\oline{*}ba^{-1})\oline{*}(a\triangle ba^{-1}) \\
&=(a^{-1}b\oline{*}ba^{-1})\oline{*}(ab^{-1}a\oline{*}ba^{-1}) \\
&\overset{\rm(B3)}{=}
(a^{-1}b\oline{*}ab^{-1}a)\oline{*}(ba^{-1}\uline{*}ab^{-1}a) \\
&=(a\triangle ab^{-1}a)\oline{*}(ba^{-1}\uline{*}ab^{-1}a) \\
&\overset{\eqref{eq:R5-1,o,triangle}}{=}
(a\oline{*}ba^{-1})\triangle(ab^{-1}a\oline{*}ba^{-1}) \\
&=(a\oline{*}ba^{-1})\triangle(a\triangle ba^{-1}) \\
&\overset{\eqref{eq:R4,o,triangle}}{=}
ba^{-1}\uline{*}a.
\end{align*}
\end{itemize}
\end{proof}

\begin{proof}[Proof of Theorem~\ref{thm:universality} (3)]
By Proposition~\ref{prop:triangle2MCB}, it is sufficient to show that $X_1$ is a triangle MCB.
We show \eqref{eq:bijection,triangle}, \eqref{eq:bijection,uo}, and $G_{a\triangle b}=G_{a\uline{*}b}=G_{a\oline{*}b}$ for $a,b\in G_\lambda$.
The other equalities \eqref{eq:R4,u,triangle}--\eqref{eq:R6,triangle} follow directly from the primitive conditions \eqref{eq:R4-1,primitive}--\eqref{eq:R6-4,primitive}.
For $a,b\in G_\lambda$, $a\sim b$ implies $a\uline{*}b\sim a\triangle b$ and $a\oline{*}b\sim a\triangle b$ by \eqref{eq:R4-1,primitive} and \eqref{eq:R4-2,primitive}, respectively.
Then $G_{a\triangle b}=G_{a\uline{*}b}=G_{a\oline{*}b}$.

We verify \eqref{eq:bijection,triangle}.
The map $\triangle a:G_a\to G_{a\triangle a}$ is well-defined, since $x\triangle a\sim a\triangle a$ follows from $x\sim a$ and $a\sim a$ by \eqref{eq:R6-1,primitive}.
Let $y\in G_{a\triangle a}$.
Then $a\sim a,y\sim a\triangle a$.
By \eqref{eq:R6-4,primitive}, 
\[ \exists!x\in X\text{ s.t.~}x\sim a,y=x\triangle a,y\triangle(a\triangle a)=x\triangle a. \]
Since $(x\triangle a)\triangle(a\triangle a)=x\triangle a$ follows from \eqref{eq:R6-1,primitive}, we can remove the condition $y\triangle(a\triangle a)=x\triangle a$, that is,
\[ \exists!x\in X\text{ s.t.~}x\sim a,y=x\triangle a. \]
Then $\triangle a$ is bijective.

We verify \eqref{eq:bijection,uo}.
By \eqref{eq:R5-1,primitive}, $\uline{*}x:G_a\to G_{a\uline{*}x}$ and $\uline{*}^{-1}x:G_{a\uline{*}x}\to G_a$ are well-defined.
By \eqref{eq:R5-2,primitive}, $\oline{*}x:G_a\to G_{a\oline{*}x}$ and $\oline{*}^{-1}x:G_{a\oline{*}x}\to G_a$ are well-defined.
Therefore $\uline{*}x:G_a\to G_{a\uline{*}x}$ and $\oline{*}x:G_a\to G_{a\oline{*}x}$ are well-defined bijections.
\end{proof}

\section{Parallel biquandle operations}
\label{sect:parallel}

In this section, we show that the $n$-parallel biquandle operations are well-defined and that $(X,(\uline{*}^{[n]})_{n\in\mathbb{Z}},(\oline{*}^{[n]})_{n\in\mathbb{Z}})$ is a $\mathbb{Z}$-family of biquandles.

\begin{proposition}
The binary operations $\uline{*}^{[n]},\oline{*}^{[n]}:X\times X\to X$ are well-defined for any $n\in\mathbb{Z}$.
\end{proposition}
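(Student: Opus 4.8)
The plan is to give, for each fixed $b\in X$, an explicit closed form for $a\uline{*}^{[n]}b$ and $a\oline{*}^{[n]}b$ as an invertible composition of the elementary maps $\uline{*}c$ and $\oline{*}c$, and then to check that these formulas satisfy the recursions \eqref{eq:*u[n]def} and \eqref{eq:*o[n]def}. First I fix $b$ and build the \emph{diagonal sequence} $(D_i)_{i\in\mathbb{Z}}$ in $X$ by $D_0:=b$, $D_{i+1}:=D_i\uline{*}D_i$ for $i\ge 0$, and, for the descending part, $D_{i-1}:=$ the unique element with $D_{i-1}\uline{*}D_{i-1}=D_i$, which exists and is unique by Lemma~\ref{lem:x*x=y}~(2). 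By (B1) we have $D_{i+1}=D_i\uline{*}D_i=D_i\oline{*}D_i$, and the square roots with respect to $\uline{*}$ and $\oline{*}$ coincide, so the \emph{same} sequence $(D_i)$ serves for both operations. This sequence is manifestly well-defined and does not itself refer to the symbols $\uline{*}^{[n]},\oline{*}^{[n]}$ we are trying to construct.

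Next I define candidate operations by left-associated products:
\[ a\uline{*}^{[n]}b:=a\uline{*}D_0\uline{*}D_1\uline{*}\cdots\uline{*}D_{n-1}\quad(n\ge 0),\qquad a\uline{*}^{[n]}b:=a\uline{*}^{-1}D_{-1}\uline{*}^{-1}\cdots\uline{*}^{-1}D_{n}\quad(n<0), \]
with the convention that the empty product ($n=0$) returns $a$, and analogously for $\oline{*}^{[n]}$ with $\oline{*}$ in place of $\uline{*}$. Since each $\uline{*}D_k$ and each $\oline{*}D_k$ is a bijection of $X$ by (B2), these formulas define genuine maps $\uline{*}^{[n]},\oline{*}^{[n]}:X\times X\to X$ for every $n\in\mathbb{Z}$. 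The first two identities of \eqref{eq:*u[n]def} (and of \eqref{eq:*o[n]def}) hold by inspection, so the whole content is to verify the third identity, the composition law $a\uline{*}^{[i+j]}b=(a\uline{*}^{[i]}b)\uline{*}^{[j]}(b\uline{*}^{[i]}b)$ for all $i,j\in\mathbb{Z}$.

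The crucial observation is a \emph{shift property}: the diagonal sequence attached to the element $D_i$ is the shifted sequence $(D_{i+m})_{m\in\mathbb{Z}}$, because $D_i\uline{*}D_i=D_{i+1}$ and, by the uniqueness in Lemma~\ref{lem:x*x=y}~(2), the descending square roots of $D_i$ are $D_{i-1},D_{i-2},\dots$. Feeding $b\uline{*}^{[i]}b=D_i$ into the closed form for $\uline{*}^{[j]}$ thus yields the concatenation identity $x\uline{*}^{[j]}D_i=x\uline{*}D_i\uline{*}D_{i+1}\uline{*}\cdots\uline{*}D_{i+j-1}$ for $j\ge 0$, and the corresponding product of inverses $x\uline{*}^{-1}D_{i-1}\uline{*}^{-1}\cdots\uline{*}^{-1}D_{i+j}$ for $j<0$. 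Substituting the closed form for $a\uline{*}^{[i]}b$ into the right-hand side $(a\uline{*}^{[i]}b)\uline{*}^{[j]}D_i$ then produces a single product of the maps $\uline{*}D_k$ and $\uline{*}^{-1}D_k$, in which adjacent factors cancel telescopically to leave exactly $a\uline{*}D_0\uline{*}\cdots\uline{*}D_{i+j-1}=a\uline{*}^{[i+j]}b$. The verification splits into the four sign cases for $(i,j)$, but each is the same elementary cancellation, and the argument for $\oline{*}^{[n]}$ is word-for-word the same with $\oline{*}$ replacing $\uline{*}$, using the same sequence $(D_i)$. Uniqueness (hence that these formulas are the operations intended by \eqref{eq:*u[n]def}--\eqref{eq:*o[n]def}) is then automatic, since the instances $j=1$ of the recursion together with the two base values already determine every $\uline{*}^{[n]}$ and $\oline{*}^{[n]}$ recursively.

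The main obstacle I expect is the bookkeeping: setting up the shift property and the concatenation identity cleanly across all sign combinations of $i$ and $j$, and in particular ensuring that the negative-index square roots used for the descending diagonal are supplied by the uniqueness clause of Lemma~\ref{lem:x*x=y}~(2) rather than tacitly assumed. Once the shift and concatenation identities are in hand, the composition law is a formal telescoping and presents no further difficulty.
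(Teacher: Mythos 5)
Your proof is correct and is essentially the paper's own argument in unpacked form: the paper iterates the bijection $\varphi(x,y)=(x\uline{*}y,\,y\uline{*}y)$ and writes $\varphi^n(x,y)=(f_n(x,y),g_n(x,y))$, where $f_n$ is exactly your left-associated product and $g_n$ is your diagonal sequence $(D_n)$, with the identity $g_n(x,y)=f_n(y,y)$ playing the role of your claim $b\uline{*}^{[n]}b=D_n$. Your four-case telescoping is just the coordinate-wise form of the paper's appeal to $\varphi^{i+j}=\varphi^j\circ\varphi^i$, and both treatments use Lemma~\ref{lem:x*x=y} in the same way to handle negative indices, so no further comparison is needed.
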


\begin{proof}
Let $\varphi:X\times X\to X\times X$ be the bijection defined by $\varphi(x,y)=(x\uline{*}y,y\uline{*}y)$, where the bijectivity follows from Lemma~\ref{lem:x*x=y}.
For $n\in\mathbb{Z}$, we define $f_n,g_n:X\times X\to X$ by $\varphi^n(x,y)=(f_n(x,y),g_n(x,y))$.
Then
\begin{align*}
&f_{n+1}(x,y)=f_n(x,y)\uline{*}g_n(x,y),
&&g_{n+1}(x,y)=g_n(x,y)\uline{*}g_n(x,y).
\end{align*}
We show that $\uline{*}^{[n]},f_n:X\times X\to X$ coincide.
Since $a\uline{*}^{[n]}b$ can be calculated by using \eqref{eq:*u[n]def}, it is sufficient to show the equalities
\begin{align*}
&f_0(a,b)=a,
&&f_1(a,b)=a\uline{*}b,
&&f_{i+j}(a,b)=f_j(f_i(a,b),f_i(b,b)),
\end{align*}
which correspond to \eqref{eq:*u[n]def}.

We show the equality $g_n(x,y)=f_n(y,y)$ by induction on $n$.
When $n=0$, the both sides coincide with $y$.
We assume that the equality holds when $n=k$ for some $k\in\mathbb{Z}_{\geq0}$.
Then we have
\begin{align*}
&g_{k+1}(x,y)=g_k(x,y)\uline{*}g_k(x,y)=f_k(y,y)\uline{*}f_k(y,y) \\
&=f_k(y,y)\uline{*}g_k(y,y)=f_{k+1}(y,y).
\end{align*}
We assume that the equality holds when $n=-k$ for some $k\in\mathbb{Z}_{\geq0}$.
By Lemma~\ref{lem:x*x=y}, we have $g_{-k-1}(y,y)=g_{-k-1}(x,y)$ from
\begin{align*}
&g_{-k-1}(y,y)\uline{*}g_{-k-1}(y,y)=g_{-k}(y,y)=f_{-k}(y,y) \\
&=g_{-k}(x,y)=g_{-k-1}(x,y)\uline{*}g_{-k-1}(x,y).
\end{align*}
Then the equality $g_{-k-1}(x,y)=f_{-k-1}(y,y)$ follows from
\begin{align*}
&g_{-k-1}(x,y)\uline{*}g_{-k-1}(x,y)=g_{-k}(x,y)=f_{-k}(y,y) \\
&=f_{-k-1}(y,y)\uline{*}g_{-k-1}(y,y)=f_{-k-1}(y,y)\uline{*}g_{-k-1}(x,y).
\end{align*}

Then we have
\[ f_j(f_i(a,b),f_i(b,b))=f_j(f_i(a,b),g_i(a,b))=f_{i+j}(a,b), \]
where the last equality follows from
\begin{align*}
&(f_{i+j}(x,y),g_{i+j}(x,y))
=\varphi^{i+j}(x,y)
=\varphi^j(\varphi^i(x,y)) \\
&=\varphi^j(f_i(x,y),g_i(x,y))
=(f_j(f_i(x,y),g_i(x,y)),g_j(f_i(x,y),g_i(x,y))).
\end{align*}
Therefore $\uline{*}^{[n]}$ coincides with $f_n$, which is well-defined.
In a similar manner, we see that $\oline{*}^{[n]}$ is well-defined.
\end{proof}

\begin{lemma} \label{lem:[n]inverse}
For $n\in\mathbb{Z}$, we have the following.
\begin{itemize}
\item
If $a\uline{*}^{[n]}b=c$, then $a=c\uline{*}^{[-n]}(b\uline{*}^{[n]}b)$.
\item
If $a\oline{*}^{[n]}b=c$, then $a=c\oline{*}^{[-n]}(b\oline{*}^{[n]}b)$.
\end{itemize}
In particular, for $n\in\mathbb{Z}$, we have the following.
\begin{itemize}
\item
If $a\uline{*}^{[n]}a=c$, then $a=c\uline{*}^{[-n]}c$.
\item
If $a\oline{*}^{[n]}a=c$, then $a=c\oline{*}^{[-n]}c$.
\end{itemize}
\end{lemma}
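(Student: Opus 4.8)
The plan is to observe that both the general statement and its specialization are immediate consequences of the defining recursion \eqref{eq:*u[n]def} (respectively \eqref{eq:*o[n]def}) for the $n$-parallel operations, read off with the single choice of indices $i=n$, $j=-n$. No induction is needed; the whole content is a substitution, so I would present it as a short direct computation rather than building any machinery.

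First I would treat the first bullet. Applying \eqref{eq:*u[n]def} with $i=n$ and $j=-n$ gives
\[ a\uline{*}^{[0]}b=(a\uline{*}^{[n]}b)\uline{*}^{[-n]}(b\uline{*}^{[n]}b). \]
By definition $a\uline{*}^{[0]}b=a$, and by hypothesis $a\uline{*}^{[n]}b=c$, so the right-hand side equals $c\uline{*}^{[-n]}(b\uline{*}^{[n]}b)$. This yields $a=c\uline{*}^{[-n]}(b\uline{*}^{[n]}b)$, as desired. The second bullet follows verbatim, replacing $\uline{*}$ by $\oline{*}$ and using \eqref{eq:*o[n]def} in place of \eqref{eq:*u[n]def}.

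For the ``in particular'' statements I would simply set $b=a$. Then $b\uline{*}^{[n]}b=a\uline{*}^{[n]}a=c$, so the first bullet specializes to $a=c\uline{*}^{[-n]}c$, and likewise the $\oline{*}$ case gives $a=c\oline{*}^{[-n]}c$. The only point requiring attention is that \eqref{eq:*u[n]def} and \eqref{eq:*o[n]def} must be available for \emph{all} integer indices $i,j$ (not merely nonnegative ones); this is exactly the well-definedness of $\uline{*}^{[n]}$ and $\oline{*}^{[n]}$ over $\mathbb{Z}$ established in the preceding proposition, so there is no genuine obstacle and the argument is complete in a few lines.
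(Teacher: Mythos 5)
Your proof is correct and is essentially identical to the paper's: both apply the defining recursion \eqref{eq:*u[n]def} (resp.\ \eqref{eq:*o[n]def}) with $i=n$, $j=-n$ to obtain $a=a\uline{*}^{[0]}b=(a\uline{*}^{[n]}b)\uline{*}^{[-n]}(b\uline{*}^{[n]}b)=c\uline{*}^{[-n]}(b\uline{*}^{[n]}b)$, and then specialize $b=a$ for the last two claims. Your remark that the recursion is available for all integer indices by the preceding well-definedness proposition is exactly the justification implicit in the paper's argument.
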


\begin{proof}
We have
$a=a\uline{*}^{[0]}b
=(a\uline{*}^{[n]}b)\uline{*}^{[-n]}(b\uline{*}^{[n]}b)
=c\uline{*}^{[-n]}(b\uline{*}^{[n]}b)$.
If $a=b$, then $a=c\uline{*}^{[-n]}(b\uline{*}^{[n]}b)=c\uline{*}^{[-n]}c$.
In the same way, we see the remaining part.
\end{proof}

\begin{proposition} \label{prop:Z-family}
Let $(X,\uline{*},\oline{*})$ be a biquandle.
Then $(X,(\uline{*}^{[n]})_{n\in\mathbb{Z}},(\oline{*}^{[n]})_{n\in\mathbb{Z}})$ is a $\mathbb{Z}$-family of biquandles.
\end{proposition}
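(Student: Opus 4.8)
The plan is to verify the two families of axioms of a $\mathbb{Z}$-family of biquandles separately. Since $\mathbb{Z}$ is abelian, every conjugation $h^{-1}gh$ is trivial, so under the dictionary $g\leftrightarrow m$, $h\leftrightarrow n$, $gh\leftrightarrow m+n$, $e\leftrightarrow0$ the ``multiplicative'' axioms read
\[
x\uline{*}^{[m+n]}y=(x\uline{*}^{[m]}y)\uline{*}^{[n]}(y\uline{*}^{[m]}y),\quad
x\uline{*}^{[0]}y=x,\quad
x\uline{*}^{[m]}x=x\oline{*}^{[m]}x
\]
together with the $\oline{*}$-analogues, and the ``exchange'' axioms read
\begin{align*}
&(x\uline{*}^{[m]}y)\uline{*}^{[n]}(z\oline{*}^{[m]}y)=(x\uline{*}^{[n]}z)\uline{*}^{[m]}(y\uline{*}^{[n]}z),\\
&(x\oline{*}^{[m]}y)\uline{*}^{[n]}(z\oline{*}^{[m]}y)=(x\uline{*}^{[n]}z)\oline{*}^{[m]}(y\uline{*}^{[n]}z),\\
&(x\oline{*}^{[m]}y)\oline{*}^{[n]}(z\oline{*}^{[m]}y)=(x\oline{*}^{[n]}z)\oline{*}^{[m]}(y\uline{*}^{[n]}z).
\end{align*}
The first two multiplicative relations, and their $\oline{*}$-versions, are precisely the defining recursions \eqref{eq:*u[n]def} and \eqref{eq:*o[n]def}, so nothing is needed there.

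Next I would establish $x\uline{*}^{[m]}x=x\oline{*}^{[m]}x$ by induction on $m$. Writing $u_m:=x\uline{*}^{[m]}x$ and $o_m:=x\oline{*}^{[m]}x$, the recursions give $u_{m+1}=u_m\uline{*}u_m$ and $o_{m+1}=o_m\oline{*}o_m$ for every $m\in\mathbb{Z}$. For $m=0$ both equal $x$, and if $u_m=o_m=:w$ then $u_{m+1}=w\uline{*}w=w\oline{*}w=o_{m+1}$ by (B1), covering all $m\ge0$. For the descent to negative $m$ I would use that $t\mapsto t\uline{*}t$ and $t\mapsto t\oline{*}t$ are injective (each arises from the bijection $S$ of (B2) restricted to the diagonal, via (B1)): from $u_m=o_m=:a$ and Lemma~\ref{lem:x*x=y}(2) there is a unique $\alpha$ with $\alpha\uline{*}\alpha=\alpha\oline{*}\alpha=a$, and then $u_{m-1}\uline{*}u_{m-1}=a=\alpha\uline{*}\alpha$ and $o_{m-1}\oline{*}o_{m-1}=a=\alpha\oline{*}\alpha$ force $u_{m-1}=\alpha=o_{m-1}$.

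The substance is the exchange family. I would first prove the three identities in the case $m=1$, for all $n$, by a single induction on $n$ carrying all three along at once. The base $n=1$ consists of the relations $(x\uline{*}y)\uline{*}(z\oline{*}y)=(x\uline{*}z)\uline{*}(y\uline{*}z)$ and its two bar-analogues; the third is literally the third relation of (B3), and the other two follow from the remaining biquandle axioms (B1)--(B3). For the step $n\to n+1$ of the first identity I would expand its right-hand side by the recursion, set $P=x\uline{*}^{[n]}z$, $Q=y\uline{*}^{[n]}z$, $R=z\uline{*}^{[n]}z$, apply the first relation of (B3) to $(P\uline{*}R)\uline{*}(Q\uline{*}R)=(P\uline{*}Q)\uline{*}(R\oline{*}Q)$, and compare with the left-hand side expanded by the recursion; the leftover factor $(z\oline{*}y)\uline{*}^{[n]}(z\oline{*}y)$ is then rewritten as $(z\uline{*}^{[n]}z)\oline{*}(y\uline{*}^{[n]}z)$ using the $m=1$ case of the \emph{second} exchange identity at level $n$, which is exactly where the three identities become coupled. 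Once $m=1$ holds for all $n$, I would induct on $m$: raising the outer exponent on the right-hand side by the recursion and invoking the same identity at level $m$ (once as is and once with $x$ replaced by $y$), together with $y\uline{*}^{[m]}y=y\oline{*}^{[m]}y$ from the previous step, reduces the identity at level $m+1$ to its $m=1$ instance applied to the arguments $x\uline{*}^{[m]}y$, $y\uline{*}^{[m]}y$, $z\oline{*}^{[m]}y$. Finally, negative $m,n$ are reached from the non-negative case: the recursions \eqref{eq:*u[n]def}, \eqref{eq:*o[n]def} hold for all integers, and Lemma~\ref{lem:[n]inverse} realizes $\uline{*}^{[-n]}$, $\oline{*}^{[-n]}$ as inverses of $\uline{*}^{[n]}$, $\oline{*}^{[n]}$ (twisted in the second variable), so each identity for negative exponents follows by applying the corresponding non-negative one and inverting.

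The main obstacle is the coupled induction on $n$ at $m=1$: the three exchange identities cannot be treated in isolation, because expanding any one by the recursion produces a factor governed by another, so all three must be pushed through the induction simultaneously while keeping the under/over bar pattern aligned exactly with (B3). By contrast the induction on $m$ is mechanical (each identity regenerates itself), the multiplicative axioms are essentially definitional, and the passage to negative exponents is routine once Lemma~\ref{lem:[n]inverse} is in hand.
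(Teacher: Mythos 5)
Your proposal is correct, and its skeleton coincides with the paper's where the paper actually gives an argument, but it diverges in one substantive way: the paper does \emph{not} prove the exchange identities \eqref{eq:R3-1,mn}--\eqref{eq:R3-3,mn} for $m,n\geq 0$ at all --- it cites \cite{IshiiNelson16} for that case and spends its entire effort on the extension to negative exponents --- whereas you re-derive the nonnegative case from scratch via the coupled double induction (first on $n$ at $m=1$, with the three identities and the diagonal identity $y\uline{*}^{[n]}y=y\oline{*}^{[n]}y$ carried simultaneously, then on $m$). That makes your proof self-contained where the paper outsources, and your induction steps check out: at $m=1$ the leftover factor $(z\oline{*}y)\uline{*}^{[n]}(z\oline{*}y)$ is indeed controlled by the second identity at level $n$, and the $m$-step works as you say, though for the second and third identities it is not quite true that ``each identity regenerates itself'' --- their $m$-steps invoke the \emph{first} identity's inductive hypothesis (with $x$ replaced by $y$) plus the diagonal identity, so the coupling persists there too; since you carry all three along anyway, this is a wording slip, not a gap. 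For the diagonal identity your two-sided induction (using (B1) upward and injectivity of $t\mapsto t\uline{*}t$, $t\mapsto t\oline{*}t$ with Lemma~\ref{lem:x*x=y}(2) downward) is equivalent to the paper's slicker observation that $x\uline{*}^{[n]}x=f^n(x)$ and $x\oline{*}^{[n]}x=g^n(x)$ for the single bijection $f=g$, $f(x)=x\uline{*}x$. The one place where your sketch compresses real work is the passage to negative exponents: ``apply the nonnegative identity and invert'' is the right mechanism, but the twisted second argument produced by Lemma~\ref{lem:[n]inverse} must be identified explicitly, and the paper does this via three auxiliary diagonal identities (its \eqref{eq:R3,aba-1}--\eqref{eq:R3,aba-3}, one per sign pattern $m\geq0\,\&\,n\leq0$, $m\leq0\,\&\,n\geq0$, $m\leq0\,\&\,n\leq0$), each obtained from the main computation specialized to a repeated argument; your flagging of the twist shows you see the issue, but a full write-up would need those auxiliary identities spelled out, and the three sign patterns treated separately rather than in one stroke. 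Net comparison: your route buys independence from the unpublished preprint at the cost of the coupled induction; the paper's route buys brevity in the nonnegative case but pays with the detailed three-case inversion argument that you largely waved through.
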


\begin{proof}
We show $a\uline{*}^{[n]}a=a\oline{*}^{[n]}a$ for $n\in\mathbb{Z}$ and $a\in X$.
Let $f,g:X\to X$ be the bijections defined by $f(x)=x\uline{*}x$, $g(x)=x\oline{*}x$.
Then we have $f^n(x)=x\uline{*}^{[n]}x$ and $g^n(x)=x\oline{*}^{[n]}x$.
Since $f=g$ follows from (B1), we have
\[ a\uline{*}^{[n]}a=f^n(a)=g^n(a)=a\oline{*}^{[n]}a. \]
Then, by the definition of $\uline{*}^{[n]}$ and $\oline{*}^{[n]}$, it is sufficient to show that
\begin{align}
(a\uline{*}^{[m]}b)\uline{*}^{[n]}(c\oline{*}^{[m]}b)
&=(a\uline{*}^{[n]}c)\uline{*}^{[m]}(b\uline{*}^{[n]}c),
\label{eq:R3-1,mn} \\
(a\oline{*}^{[m]}b)\uline{*}^{[n]}(c\oline{*}^{[m]}b)
&=(a\uline{*}^{[n]}c)\oline{*}^{[m]}(b\uline{*}^{[n]}c),
\label{eq:R3-2,mn} \\
(a\oline{*}^{[m]}b)\oline{*}^{[n]}(c\oline{*}^{[m]}b)
&=(a\oline{*}^{[n]}c)\oline{*}^{[m]}(b\uline{*}^{[n]}c)
\label{eq:R3-3,mn}
\end{align}
for $m,n\in\mathbb{Z}$ and $a,b,c\in X$.
These equalities were verified for $m,n\geq0$ in \cite{IshiiNelson16}.
We note that
\begin{align*}
&x=x\uline{*}^{[0]}y=(x\uline{*}^{[n]}y)\uline{*}^{[-n]}(y\uline{*}^{[n]}y)=(x\uline{*}^{[n]}y)\uline{*}^{[-n]}(y\oline{*}^{[n]}y), \\
&x=x\oline{*}^{[0]}y=(x\oline{*}^{[n]}y)\oline{*}^{[-n]}(y\oline{*}^{[n]}y)=(x\oline{*}^{[n]}y)\oline{*}^{[-n]}(y\uline{*}^{[n]}y).
\end{align*}

We show the equality \eqref{eq:R3-2,mn}.
Let $m\geq0$, $n=-k\leq0$.
By Lemma~\ref{lem:[n]inverse}, the equality
\begin{align}
(c\uline{*}^{[-k]}c)\oline{*}^{[m]}(b\uline{*}^{[-k]}c)
=(c\oline{*}^{[m]}b)\uline{*}^{[-k]}(c\oline{*}^{[m]}b)
\label{eq:R3,aba-1}
\end{align}
follows from
\begin{align*}
c\oline{*}^{[m]}b
&=((c\uline{*}^{[-k]}c)\uline{*}^{[k]}(c\uline{*}^{[-k]}c))\oline{*}^{[m]}((b\uline{*}^{[-k]}c)\uline{*}^{[k]}(c\uline{*}^{[-k]}c)) \\
&=((c\uline{*}^{[-k]}c)\oline{*}^{[m]}(b\uline{*}^{[-k]}c))\uline{*}^{[k]}((c\uline{*}^{[-k]}c)\oline{*}^{[m]}(b\uline{*}^{[-k]}c)).
\end{align*}
Then we have
\begin{align*}
a\oline{*}^{[m]}b
&=((a\uline{*}^{[-k]}c)\uline{*}^{[k]}(c\uline{*}^{[-k]}c))\oline{*}^{[m]}((b\uline{*}^{[-k]}c)\uline{*}^{[k]}(c\uline{*}^{[-k]}c)) \\
&=((a\uline{*}^{[-k]}c)\oline{*}^{[m]}(b\uline{*}^{[-k]}c))\uline{*}^{[k]}((c\uline{*}^{[-k]}c)\oline{*}^{[m]}(b\uline{*}^{[-k]}c)) \\
&\overset{\eqref{eq:R3,aba-1}}{=}
((a\uline{*}^{[-k]}c)\oline{*}^{[m]}(b\uline{*}^{[-k]}c))\uline{*}^{[k]}((c\oline{*}^{[m]}b)\uline{*}^{[-k]}(c\oline{*}^{[m]}b))
\end{align*}
By Lemma~\ref{lem:[n]inverse}, we have
\begin{align*}
&(a\uline{*}^{[-k]}c)\oline{*}^{[m]}(b\uline{*}^{[-k]}c) \\
&=(a\oline{*}^{[m]}b)\uline{*}^{[-k]}(((c\oline{*}^{[m]}b)\uline{*}^{[-k]}(c\oline{*}^{[m]}b))\uline{*}^{[k]}((c\oline{*}^{[m]}b)\uline{*}^{[-k]}(c\oline{*}^{[m]}b))) \\
&=(a\oline{*}^{[m]}b)\uline{*}^{[-k]}(c\oline{*}^{[m]}b).
\end{align*}

Let $m=-k\leq0$, $n\geq0$.
By Lemma~\ref{lem:[n]inverse}, the equality
\begin{align}
(b\oline{*}^{[-k]}b)\uline{*}^{[n]}(c\oline{*}^{[-k]}b)
=(b\uline{*}^{[n]}c)\oline{*}^{[-k]}(b\uline{*}^{[n]}c)
\label{eq:R3,aba-2}
\end{align}
follows from
\begin{align*}
b\uline{*}^{[n]}c
&=((b\oline{*}^{[-k]}b)\oline{*}^{[k]}(b\oline{*}^{[-k]}b))\uline{*}^{[n]}((c\oline{*}^{[-k]}b)\oline{*}^{[k]}(b\oline{*}^{[-k]}b)) \\
&=((b\oline{*}^{[-k]}b)\uline{*}^{[n]}(c\oline{*}^{[-k]}b))\oline{*}^{[k]}((b\oline{*}^{[-k]}b)\uline{*}^{[n]}(c\oline{*}^{[-k]}b)).
\end{align*}
Then we have
\begin{align*}
a\uline{*}^{[n]}c
&=((a\oline{*}^{[-k]}b)\oline{*}^{[k]}(b\oline{*}^{[-k]}b))\uline{*}^{[n]}((c\oline{*}^{[-k]}b)\oline{*}^{[k]}(b\oline{*}^{[-k]}b)) \\
&=((a\oline{*}^{[-k]}b)\uline{*}^{[n]}(c\oline{*}^{[-k]}b))\oline{*}^{[k]}((b\oline{*}^{[-k]}b)\uline{*}^{[n]}(c\oline{*}^{[-k]}b)) \\
&\overset{\eqref{eq:R3,aba-2}}{=}
((a\oline{*}^{[-k]}b)\uline{*}^{[n]}(c\oline{*}^{[-k]}b))\oline{*}^{[k]}((b\uline{*}^{[n]}c)\oline{*}^{[-k]}(b\uline{*}^{[n]}c)).
\end{align*}
By Lemma~\ref{lem:[n]inverse}, we have
\begin{align*}
&(a\oline{*}^{[-k]}b)\uline{*}^{[n]}(c\oline{*}^{[-k]}b) \\
&=(a\uline{*}^{[n]}c)\oline{*}^{[-k]}(((b\uline{*}^{[n]}c)\oline{*}^{[-k]}(b\uline{*}^{[n]}c))\oline{*}^{[k]}((b\uline{*}^{[n]}c)\oline{*}^{[-k]}(b\uline{*}^{[n]}c))) \\
&=(a\uline{*}^{[n]}c)\oline{*}^{[-k]}(b\uline{*}^{[n]}c).
\end{align*}

Let $m=-k\leq0$, $n=-l\leq0$.
By Lemma~\ref{lem:[n]inverse}, the equality
\begin{align}
(b\oline{*}^{[-k]}b)\uline{*}^{[-l]}(c\oline{*}^{[-k]}b)
=(b\uline{*}^{[-l]}c)\oline{*}^{[-k]}(b\uline{*}^{[-l]}c)
\label{eq:R3,aba-3}
\end{align}
follows from
\begin{align*}
b\uline{*}^{[-l]}c
&=((b\oline{*}^{[-k]}b)\oline{*}^{[k]}(b\oline{*}^{[-k]}b))\uline{*}^{[-l]}((c\oline{*}^{[-k]}b)\oline{*}^{[k]}(b\oline{*}^{[-k]}b)) \\
&=((b\oline{*}^{[-k]}b)\uline{*}^{[-l]}(c\oline{*}^{[-k]}b))\oline{*}^{[k]}((b\oline{*}^{[-k]}b)\uline{*}^{[-l]}(c\oline{*}^{[-k]}b)).
\end{align*}
Then we have
\begin{align*}
a\uline{*}^{[-l]}c
&=((a\oline{*}^{[-k]}b)\oline{*}^{[k]}(b\oline{*}^{[-k]}b))\uline{*}^{[-l]}((c\oline{*}^{[-k]}b)\oline{*}^{[k]}(b\oline{*}^{[-k]}b)) \\
&=((a\oline{*}^{[-k]}b)\uline{*}^{[-l]}(c\oline{*}^{[-k]}b))\oline{*}^{[k]}((b\oline{*}^{[-k]}b)\uline{*}^{[-l]}(c\oline{*}^{[-k]}b)) \\
&\overset{\eqref{eq:R3,aba-3}}{=}
((a\oline{*}^{[-k]}b)\uline{*}^{[-l]}(c\oline{*}^{[-k]}b))\oline{*}^{[k]}((b\uline{*}^{[-l]}c)\oline{*}^{[-k]}(b\uline{*}^{[-l]}c)).
\end{align*}
By Lemma~\ref{lem:[n]inverse}, we have
\begin{align*}
&(a\oline{*}^{[-k]}b)\uline{*}^{[-l]}(c\oline{*}^{[-k]}b) \\
&=(a\uline{*}^{[-l]}c)\oline{*}^{[-k]}(((b\uline{*}^{[-l]}c)\oline{*}^{[-k]}(b\uline{*}^{[-l]}c))\oline{*}^{[k]}((b\uline{*}^{[-l]}c)\oline{*}^{[-k]}(b\uline{*}^{[-l]}c))) \\
&=(a\uline{*}^{[-l]}c)\oline{*}^{[-k]}(b\uline{*}^{[-l]}c).
\end{align*}
This completes the proof of \eqref{eq:R3-2,mn}.
In a similar manner, we can verify \eqref{eq:R3-1,mn} and \eqref{eq:R3-3,mn} by using the equalities \eqref{eq:R3,aba-1}--\eqref{eq:R3,aba-3}.

This completes the proof.
\end{proof}

%

\end{document}